\newcommand{\A}{\category{A}}
\newcommand{\C}{\category{C}}
\newcommand{\category}[1]{\mathcal{#1}}
\newcommand{\T}{\category{T}}
\newcommand{\U}{\category{U}}
\newcommand{\nang}{\mathscr{N}}
\newcommand{\toda}[1]{\langle #1\rangle}
\newcommand{\todai}[2]{\toda{#1}_{[#2]}}
\newcommand{\todacc}[1]{\toda{#1}_{\text{cc}}}
\newcommand{\todafc}[1]{\toda{#1}_{\text{fc}}}
\newcommand{\todaff}[1]{\toda{#1}_{\text{ff}}}
\newcommand{\todaSS}[1]{\toda{#1}_{\text{SS}}}
\newcommand{\massey}[1]{\langle\!\langle #1 \rangle\!\rangle}
\DeclareMathOperator{\add}{add}
\DeclareMathOperator{\derived}{D}
\DeclareMathOperator{\dg}{dg}
\DeclareMathOperator{\gldim}{gldim}
\DeclareMathOperator{\htpy}{K}
\let\mod\relax
\DeclareMathOperator{\mod}{mod} 
\DeclareMathOperator{\Mod}{Mod}
\DeclareMathOperator{\dgmod}{\Mod_{\dg}}
\DeclareMathOperator{\proj}{proj}
\DeclareMathOperator{\pd}{pd}
\DeclareMathOperator{\Span}{span}
\DeclareMathOperator{\TT}{T}
\DeclareMathOperator{\Ext}{Ext}
\newcommand{\op}{\oplus}
\let\phi\varphi
\newcommand{\Z}{\mathbb{Z}}
\newcommand{\field}{\mathbb{k}}
\newcommand{\rep}[2]{\begin{array}{c} #1\\ #2\end{array}}
\newcommand{\Dd} {\mathfrak D}
\newcommand{\tild}{\widetilde}
\newcommand{\ff}{\mathrm{ff}} 
\newcommand{\Id}{\mathrm{Id}} 
\newcommand{\diagram}[3]{\matrix (#1) [matrix of math nodes,%
  row sep = {#2},%
  column sep = {#3},%
  text height = 1.5ex,%
  text depth=0.25ex]}
\newcommand{\smalldiagram}[3]{\matrix (#1) [matrix of math nodes,%
  row sep = {#2},%
  column sep = {#3},%
  text height = 1.5ex,%
  text depth=0.25ex,%
  font = \small]}
\newcommand{\downequal}[3]{
  ([xshift=0.1em] {#1}-#2.south) edge[-] ([xshift=0.1em] {#1}-#3.north)
  ([xshift=-0.1em] {#1}-#2.south) edge[-] ([xshift=-0.1em] {#1}-#3.north)}
\newcommand{\smat}[1]{\left[\begin{smallmatrix*}[r]#1\end{smallmatrix*}\right]}
\tikzset{
  suspension/.style = {postaction = decorate,
    decoration = {
      markings,
      mark = at position 0.3 with {\draw[-] (0,-0.075) -- (0,0.075);}
    },
  },
}
\tikzset{
  encircled/.style = {
    ->,
    shorten < = 2.5pt,
    shorten > = 2.5pt,
  }
}
\theoremstyle{plain}
\newtheorem{theorem}{Theorem}[section]
\newtheorem{proposition}[theorem]{Proposition}
\newtheorem{corollary}[theorem]{Corollary}
\newtheorem{lemma}[theorem]{Lemma}
\theoremstyle{definition}
\newtheorem{definition}[theorem]{Definition}
\newtheorem{example}[theorem]{Example}
\newtheorem{remark}[theorem]{Remark}
\newcommand{\Def}[1]{\textbf{\boldmath{#1}}} 
\def\namedlabel#1#2{\begingroup
  #2%
  \def\@currentlabel{#2}%
  \phantomsection\label{#1}\endgroup
}
\def\equationautorefname~#1\null{Diagram~(#1)\null}
\title{Toda brackets in $n$-angulated categories}
\date{\today}
\author{Martin Frankland}
\address{Department of Mathematics and Statistics, University of Regina, 3737
  Wascana Parkway Regina, Saskatchewan, S4S 0A2, Canada}
\email{martin.frankland@uregina.ca}
\author{Sebastian H.\ Martensen}
\address{Department of Mathematical Sciences, NTNU, N-7491 Trondheim, Norway}
\email{sebastian.martensen@ntnu.no}
\author{Marius Thaule}
\address{Department of Mathematical Sciences, NTNU, N-7491 Trondheim, Norway}
\email{marius.thaule@ntnu.no}
\begin{document}

%
%
\begin{abstract}
  We introduce Toda brackets for $n$-angulated categories and show that the
  various definitions of Toda brackets coincide. We prove juggling formulas for
  these Toda brackets generalizing the triangulated case. Following that, we
  generalize a theorem due to Heller in the triangulated setting to the setting
  of $n$-angulated categories. We also provide several examples of computing
  Toda brackets for $n$-angulated categories. Finally, for an $n$-angulated
  category sitting in a triangulated category as in the setup of Geiss, Keller
  and Oppermann, we show that Toda brackets in the $n$-angulated sense coincide
  with $n$-fold Toda brackets in the triangulated sense up to an explicit sign.
\end{abstract}

\keywords{$n$-angulated category, triangulated category, Toda bracket, Massey
  product, quiver representation, cluster tilting category}

\subjclass[2020]{Primary 18G80; Secondary 16G20, 55S30}

\maketitle

%
%
\section{Introduction}
\label{sec:intro}

Triangulated categories were first introduced, independently, by Puppe (in 1962)
and Verdier (in 1963) in algebraic topology and algebraic geometry,
respectively. Triangulated categories are prominently featured in areas such as
representation theory, commutative algebra, algebraic geometry and algebraic
topology. A generalization of triangulated categories, known as $n$-angulated
categories, was introduced by Geiss, Keller and Oppermann \cite{GKO}, such that
$3$-angulated categories coincide with triangulated categories. Examples of
$n$-angulated categories include certain $(n - 2)$-cluster tilting subcategories
of triangulated categories \cite{GKO}, i.e., algebraic $n$-angulated categories
in the sense of \cite{J} and categories of finitely generated free modules over
commutative local rings with principal maximal ideal squaring to zero
\cite{BerghJT16}.

Toda brackets were originally introduced by Toda (in 1952) to compute homotopy
groups of spheres and play an important role in homotopy theory \cite{Toda62}.
They have been used extensively in computations of stable homotopy groups of
spheres \cite{Isaksen19}. Toda brackets may be defined for an arbitrary
triangulated category. Christensen and Frankland proved in \cite{CF} how the
differential $d_r$ in the Adams spectral sequence constructed from a
triangulated category with a projective or an injective class, cf.\ \cite{C},
may be described using $(r + 1)$-fold Toda brackets. Another application of Toda
brackets and triangulated categories include a theorem due to Heller
\cite[Theorem~13.2]{H} stating that a triangle
\begin{center}
  \begin{tikzpicture}
    \diagram{d}{2em}{2em}{
      X & Y & Z & \Sigma X\\
    };

    \path[->, font = \scriptsize, auto]
    (d-1-1) edge node{$f$} (d-1-2)
    (d-1-2) edge node{$g$} (d-1-3)
    (d-1-3) edge node{$h$} (d-1-4);
  \end{tikzpicture}
\end{center}
in a triangulated category $\T$ is \emph{distinguished} if and only if it is
Yoneda exact and the Toda bracket $\toda{h,g,f}\subseteq \T(\Sigma X, \Sigma X)$
contains the identity morphism $1_{\Sigma X}$. See also \cite[Theorem B.1]{CF}.

\subsection*{Organization and main results}
The main goal for this paper is to introduce Toda brackets for an arbitrary
$n$-angulated category. We should note that none of our proofs rely on the
octahedral axiom for $n$-angulated categories which allows us to work with
pre-$n$-angulated categories.

In \autoref{sec:prelim}, we review the definitions of the iterated cofiber,
iterated fiber and fiber-cofiber Toda brackets in the setting of a triangulated
category. We also briefly review the definition of a pre-$n$-angulated category.
In \autoref{sec:toda_nang}, we define Toda brackets in the setting of
pre-$n$-angulated categories. \autoref{sec:toda} contains one of our main
results, namely that under a mild condition, we may speak of \emph{the} Toda
bracket in pre-$n$-angulated categories, in the sense that the iterated cofiber,
iterated fiber and fiber-cofiber Toda brackets all coincide
(\autoref{thm:main}). In \autoref{sec:intermediate}, we introduce what we call
$[i]$-intermediate Toda brackets in the setting of pre-$n$-angulated categories,
with the $[1]$-intermediate Toda bracket being the same as the iterated cofiber
Toda bracket and the $[n]$-intermediate Toda bracket being the same as the
iterated fiber Toda bracket. For $1 < i < n$, the $[i]$-intermediate Toda
brackets resemble the fiber-cofiber Toda bracket in the triangulated setting. We
prove that under a mild condition, the $[i]$-intermediate Toda brackets coincide
with the previous definitions (\autoref{thm:all_coincide}).

In \autoref{sec:juggling_formulas}, we prove that the juggling formulas for Toda
brackets in the triangulated case carry over to the pre-$n$-angulated case
(\autoref{prop:subadd} and \autoref{prop:juggling_fc}). For the case $n = 3$ we
get a collected reference for $3$-fold Toda brackets in pretriangulated
categories. In \autoref{sec:toda_determines_nang}, we prove that Toda brackets
determine the pre-$n$-angulation (\autoref{prop:NAngleCriterion}), generalizing
a theorem due to Heller \cite{H}. \autoref{sec:examples} contains several
examples of computing Toda brackets for various $n$-angulated categories ranging
from exotic $n$-angulated categories to $n$-angulated categories arising in
connection to cluster tilting categories.

Finally, in \autoref{sec:LongerHigher}, for an $n$-angulated category sitting in
a triangulated category as in the setup of \cite{GKO}, we show that Toda
brackets in the $n$-angulated sense coincide with $n$-fold Toda brackets in the
triangulated sense up to an explicit sign (\autoref{thm:LongerHigher}). We
deduce that Toda brackets in an enhanced $n$-angulated category (in the sense of
\cite{JM}) coincide with Massey products up to a sign (\autoref{cor:Massey}).

\subsection*{Acknowledgments}

We thank Dan Isaksen, Steffen Oppermann, and Laertis Vaso for helpful
discussions. Frankland acknowledges the support of the Natural Sciences and
Engineering Research Council of Canada (NSERC), grant RGPIN-2019-06082.

%
%
\section{Preliminaries}
\label{sec:prelim}

\subsection*{Toda brackets in triangulated categories}
\label{subsec:toda_tri}

We recall the iterated cofiber, the iterated fiber and fiber-cofiber Toda
bracket in triangulated categories from \cite[Definition~3.1]{CF}.

\begin{definition}
  \label{def:TodaBracket_tri}
  Let $\T$ be a triangulated category and let
  \begin{center}
    \begin{tikzpicture}
      \diagram{d}{2em}{2em}{
        X_1 & X_2 & X_3 & X_4\\
      };
      
      \path[->, font = \scriptsize, auto]
      (d-1-1) edge node{$f_1$} (d-1-2)
      (d-1-2) edge node{$f_2$} (d-1-3)
      (d-1-3) edge node{$f_3$} (d-1-4);
    \end{tikzpicture}
  \end{center}
  be a diagram in $\T$. We define subsets of $\T(\Sigma X_1, X_4)$ as follows:
  \begin{itemize}
  \item The \Def{iterated cofiber Toda bracket} $\todacc{f_3,f_2,f_1}\subseteq
    \T(\Sigma X_1, X_4)$ consists of all morphisms $\psi \colon \Sigma X_1 \to
    X_4$ that appear in a commutative diagram
    \begin{center}
      \begin{tikzpicture}
        \diagram{d}{2em}{2em}{
          X_1 & X_2 & Y_3 & \Sigma X_1\\
          X_1 & X_2 & X_3 & X_4\\
        };
        
        \path[->, auto, font = \scriptsize]
        (d-1-1) edge node{$f_1$} (d-1-2)
        \downequal{d}{1-1}{2-1}
        (d-1-2) edge node{$y_2$} (d-1-3)
        \downequal{d}{1-2}{2-2}
        (d-1-3) edge node{$y_3$} (d-1-4)
        edge node{$\phi$} (d-2-3)
        (d-1-4) edge node{$\psi$} (d-2-4)
        
        (d-2-1) edge node{$f_1$} (d-2-2)
        (d-2-2) edge node{$f_2$} (d-2-3)
        (d-2-3) edge node{$f_3$} (d-2-4);
      \end{tikzpicture}
    \end{center}
    where the top row is distinguished.
  \item The \Def{iterated fiber Toda bracket} $\todaff{f_3,f_2,f_1}\subseteq
    \T(\Sigma X_1,X_4)$ consists of all morphisms $\Sigma \delta\colon \Sigma
    X_1\to X_4$ where $\delta$ appears in a commutative diagram
    \begin{center}
      \begin{tikzpicture}
        \diagram{d}{2em}{2em}{
          X_1 & X_2 & X_3 & X_4\\
          \Sigma^{-1} X_4 & W_2 & X_3 & X_4\\
        };
        
        \path[->, auto, font = \scriptsize]
        (d-1-1) edge node{$f_1$} (d-1-2)
                edge node{$\delta$} (d-2-1)
        (d-1-2) edge node{$f_2$} (d-1-3)
                edge node{$\gamma$} (d-2-2)
        (d-1-3) edge node{$f_3$} (d-1-4)
        \downequal{d}{1-3}{2-3}
        \downequal{d}{1-4}{2-4}
        
        (d-2-1) edge node{$w_1$} (d-2-2)
        (d-2-2) edge node{$w_2$} (d-2-3)
        (d-2-3) edge node{$f_3$} (d-2-4);
      \end{tikzpicture}
    \end{center}
    where the bottom row is distinguished.
  \item The \Def{fiber-cofiber Toda bracket} $\todafc{f_3,f_2,f_1}\subseteq
    \T(\Sigma X_1, X_4)$ consists of all composites $\Sigma(\beta^2_1 \beta^1_1)
    \colon \Sigma X_1 \to X_4$, where $\beta^1_1$ and $\beta^2_1$ appear in a
    commutative diagram
    \begin{center}
      \begin{tikzpicture}
        \diagram{d}{2em}{2em}{
          X_1 & X_2 && \Sigma X_1\\
          Z^2_1 & X_2 & X_3 & \Sigma Z^2_1\\
          && X_3 & X_4.\\
        };
            
        \path[->, auto, font = \scriptsize]
        (d-1-1) edge node{$f_1$} (d-1-2)
                edge node{$\beta^1_1$} (d-2-1)
        \downequal{d}{1-2}{2-2}
        (d-1-4) edge node{$\Sigma\beta^1_1$} (d-2-4)
        
        (d-2-1) edge node{$z^2_1$} (d-2-2)
        (d-2-2) edge node{$f_2$} (d-2-3)
        (d-2-3) edge node{$z^2_3$} (d-2-4)
        \downequal{d}{2-3}{3-3}
        (d-2-4) edge node{$\Sigma\beta^2_1$} (d-3-4)
        
        (d-3-3) edge node{$f_3$} (d-3-4);
      \end{tikzpicture}
    \end{center}
    where the middle row is distinguished.
  \end{itemize}
\end{definition}

The notation was chosen to be consistent with \autoref{def:TodaNang}.

\begin{remark}
  The iterated cofiber, iterated fiber and fiber-cofiber definitions of Toda
  brackets coincide. See, e.g., \cite[Proposition~3.3]{CF} for a proof.
\end{remark}

%
%
\subsection*{Pre-\texorpdfstring{$n$}{n}-angulated categories}
\label{subsec:nang}

Let $\C$ be an additive category, $\Sigma \colon \C \to \C$ an automorphism, and
let $n$ be an integer greater than or equal to three.

We will refer to a sequence of objects and morphisms in $\C$ of the form
\begin{center}
  \begin{tikzpicture}
    \diagram{d}{2em}{2em}{
      X_\bullet \coloneq \bigl(X_1 & X_2 & \cdots & X_n & \Sigma X_1\bigr)\\
    };

    \path[->, font = \scriptsize, auto]
    (d-1-1) edge node{$f_1$} (d-1-2)
    (d-1-2) edge node{$f_2$} (d-1-3)
    (d-1-3) edge node{$f_{n - 1}$} (d-1-4)
    (d-1-4) edge node{$f_n$} (d-1-5);
  \end{tikzpicture}
\end{center}
as an \Def{$n$-$\Sigma$-sequence}. The \Def{left and right rotations} of
$X_\bullet$ are the two $n$-$\Sigma$-sequences
\begin{center}
  \begin{tikzpicture}
    \diagram{d}{2em}{3.5em}{
      X_\bullet [1] \coloneq \bigl(X_2 & X_3 & \cdots & \Sigma X_1 & \Sigma X_2\bigr)\\
    };
    
    \path[->, font = \scriptsize, auto]
    (d-1-1) edge node{$f_2$} (d-1-2)
    (d-1-2) edge node{$f_3$} (d-1-3)
    (d-1-3) edge node{$f_n$} (d-1-4)
    (d-1-4) edge node{$(-1)^n \Sigma f_1$} (d-1-5);
  \end{tikzpicture}
\end{center}
and
\begin{center}
  \begin{tikzpicture}
    \diagram{d}{2em}{4em}{
      X_\bullet [-1] \coloneq \bigl(\Sigma^{-1} X_n & X_1 & \cdots & X_{n - 1} &
      X_n\bigr),\\
    };
    
    \path[->, font = \scriptsize, auto]
    (d-1-1) edge node{$(-1)^n \Sigma^{-1} f_n$} (d-1-2)
    (d-1-2) edge node{$f_1$} (d-1-3)
    (d-1-3) edge node{$f_{n - 2}$} (d-1-4)
    (d-1-4) edge node{$f_{n - 1}$} (d-1-5);
    \end{tikzpicture}
\end{center}
respectively. If $X_\bullet$ is rotated to the left $k$ times we write
$X_\bullet [k]$, and similarly, if $X_\bullet$ is rotated to the right $k$ times
we write $X_\bullet [-k]$. A \Def{trivial} $n$-$\Sigma$-sequence is a sequence
of the form
\begin{center}
  \begin{tikzpicture}
    \diagram{d}{2em}{2em}{
      (TX)_\bullet \coloneq \bigl(X & X & 0 & \cdots & 0 & \Sigma X_1\bigr)\\
    };
    
    \path[->, font = \scriptsize, auto]
    (d-1-1) edge node{$1$} (d-1-2)
    (d-1-2) edge (d-1-3)
    (d-1-3) edge (d-1-4)
    (d-1-4) edge (d-1-5)
    (d-1-5) edge (d-1-6);
  \end{tikzpicture}
\end{center}
or any of its rotations.

A \Def{morphism} $\phi\colon X_{\bullet} \to Y_{\bullet}$ of
$n$-$\Sigma$-sequences is a sequence $\phi = (\phi_1,\phi_2,\ldots,\phi_n)$ of
morphisms in $\C$ such that the diagram
\begin{center}
  \begin{tikzpicture}
    \diagram{d}{2em}{2em}{
      X_1 & X_2 & X_3 & \cdots & X_n & \Sigma X_1\\
      Y_1 & Y_2 & Y_3 & \cdots & Y_n & \Sigma Y_1\\
    };
    
    \path[->, auto, font = \scriptsize]
    (d-1-1) edge node {$f_1$} (d-1-2)
            edge node {$\phi_1$} (d-2-1)
    (d-1-2) edge node {$f_2$} (d-1-3)
            edge node {$\phi_2$} (d-2-2)
    (d-1-3) edge node {$f_3$} (d-1-4)
            edge node {$\phi_3$} (d-2-3)
    (d-1-4) edge node {$f_{n - 1}$} (d-1-5)
    (d-1-5) edge node {$f_n$} (d-1-6)
            edge node {$\phi_n$} (d-2-5)
    (d-1-6) edge node {$\Sigma\phi_1$} (d-2-6)
    
    (d-2-1) edge node {$g_1$} (d-2-2)
    (d-2-2) edge node {$g_2$} (d-2-3)
    (d-2-3) edge node {$g_3$} (d-2-4)
    (d-2-4) edge node {$g_{n - 1}$} (d-2-5)
    (d-2-5) edge node {$g_n$} (d-2-6);
  \end{tikzpicture}
\end{center}
commutes. It is an \Def{isomorphism} if $\phi_1,\phi_2,\ldots,\phi_n$ are all
isomorphisms in $\C$, and a \Def{weak isomorphism} if $\phi_i$ and $\phi_{i +
  1}$ are isomorphisms for some $1 \leq i \leq n$ (with $\phi_{n + 1} \coloneq
\Sigma \phi_1$).

\begin{definition}\label{def:NAng}
  Let $\nang$ be a collection of $n$-$\Sigma$-sequences in $\C$. Then the triple
  $(\C, \Sigma, \nang)$ is a \Def{pre-$n$-angulated category} if the following
  three axioms are satisfied:

  \begin{description}[leftmargin = 2\parindent, style = multiline]
  \item[\namedlabel{N1}{(N1)}]
    \begin{description}[style = multiline]
    \item[\namedlabel{N1a}{(a)}] $\nang$ is closed under direct sums, direct
      summands and isomorphisms of $n$-$\Sigma$-sequences.
    \item[\namedlabel{N1b}{(b)}] For all $X \in \C$, the trivial
      $n$-$\Sigma$-sequence $(TX)_\bullet$ belongs to $\nang$.
    \item[\namedlabel{N1c}{(c)}] For each morphism $f \colon X_1 \to X_2$ in
      $\C$, there exists an $n$-$\Sigma$-sequence in $\nang$ whose first
      morphism is $f$.\sloppy
    \end{description}
  \item[\namedlabel{N2}{(N2)}] An $n$-$\Sigma$-sequence belongs to $\nang$ if
    and only if its left rotation belongs to $\nang$.
  \item[\namedlabel{N3}{(N3)}] Given the solid part of the commutative diagram
    \begin{center}
      \begin{tikzpicture}
        \diagram{d}{2em}{2em}{
          X_1 & X_2 & X_3 & \cdots & X_n & \Sigma X_1\\
          Y_1 & Y_2 & Y_3 & \cdots & Y_n & \Sigma Y_1\\
        };
        
        \path[->, auto, font = \scriptsize]
        (d-1-1) edge node {$f_1$} (d-1-2)
                edge node {$\phi_1$} (d-2-1)
        (d-1-2) edge node {$f_2$} (d-1-3)
                edge node {$\phi_2$} (d-2-2)
        (d-1-3) edge node {$f_3$} (d-1-4)
                edge[densely dashed] node {$\phi_3$} (d-2-3)
        (d-1-4) edge node {$f_{n - 1}$} (d-1-5)
        (d-1-5) edge node {$f_n$} (d-1-6)
                edge[densely dashed] node {$\phi_n$} (d-2-5)
        (d-1-6) edge node {$\Sigma\phi_1$} (d-2-6)
        
        (d-2-1) edge node {$g_1$} (d-2-2)
        (d-2-2) edge node {$g_2$} (d-2-3)
        (d-2-3) edge node {$g_3$} (d-2-4)
        (d-2-4) edge node {$g_{n - 1}$} (d-2-5)
        (d-2-5) edge node {$g_n$} (d-2-6);
      \end{tikzpicture}
    \end{center}
    with rows in $\nang$, the dotted morphisms exist and give a morphism of
    $n$-$\Sigma$-sequences.\sloppy
  \end{description}

  We refer to the collection $\nang$ as a \Def{pre-$n$-angulation} of the
  category $\C$ (relative to the automorphism $\Sigma$), and the
  $n$-$\Sigma$-sequences in $\nang$ as \Def{$n$-angles}.
\end{definition}

\begin{remark}
  \label{rem:nang}
  A pre-$n$-angulated category $\C$ is called an \Def{$n$-angulated category} if
  the collection $\nang$ of $n$-angles also satisfies a fourth axiom. In the
  original definition, Geiss, Keller and Oppermann \cite{GKO} used a ``mapping
  cone axiom'' inspired by Neeman's mapping cone axiom in the triangulated case.
  Bergh and Thaule proved in \cite{BT} that this is equivalent to a ``higher
  octahedron axiom'' inspired by Verdier's original octahedron axiom for
  triangulated categories. We refer the reader to \cite{GKO} and \cite{BT} for
  the exact details of the fourth axiom.
\end{remark}

%
%
\section{Toda brackets in pre-\texorpdfstring{$n$}{n}-angulated categories}
\label{sec:toda_nang}

We now define the iterated cofiber, iterated fiber and fiber-cofiber Toda
brackets for pre-$n$-angulated categories. We fix a pre-$n$-angulated category
$\C$ for the remainder of the paper.
\begin{definition}\label{def:TodaNang}
  Let
  \begin{equation}
    \label{eq:seq}
    \begin{aligned}
      \begin{tikzpicture}
        \diagram{d}{2em}{2em}{
          X_1 & X_2 & X_3 & \cdots & X_n & X_{n+1}\\
        };
        
        \path[->, auto, font = \scriptsize]
        (d-1-1) edge node{$f_1$} (d-1-2)
        (d-1-2) edge node{$f_2$} (d-1-3)
        (d-1-3) edge node{$f_3$} (d-1-4)
        (d-1-4) edge node{$f_{n-1}$} (d-1-5)
        (d-1-5) edge node{$f_n$} (d-1-6);
      \end{tikzpicture}
    \end{aligned}
  \end{equation}
  be a diagram in $\C$. We define subsets of $\C(\Sigma X_1, X_{n + 1})$ as
  follows:

  \begin{itemize}
  \item The \Def{iterated cofiber Toda bracket}
    $\todacc{f_n,\dots,f_2,f_1}\subseteq \C(\Sigma X_1, X_{n + 1})$ consists of
    all morphisms $\psi \colon \Sigma X_1 \to X_{n + 1}$ that appear in a
    commutative diagram
    \begin{equation}
      \label{eq:TodaCC}
      \begin{aligned}
        \begin{tikzpicture}
          \diagram{d}{2em}{2em}{
            X_1 & X_2 & Y_3 & Y_4 & \cdots & Y_n & \Sigma X_1\\
            X_1 & X_2 & X_3 & X_4 & \cdots & X_n & X_{n + 1}\\
          };
          
          \path[->, auto, font = \scriptsize]
          (d-1-1) edge node{$f_1$} (d-1-2)
          \downequal{d}{1-1}{2-1}
          (d-1-2) edge node{$y_2$} (d-1-3)
          \downequal{d}{1-2}{2-2}
          (d-1-3) edge node{$y_3$} (d-1-4)
                  edge node{$\phi_3$} (d-2-3)
          (d-1-4) edge node{$\phi_4$} (d-2-4)
                  edge node{$y_4$} (d-1-5)
          (d-1-5) edge node{$y_{n-1}$} (d-1-6)
          (d-1-6) edge node{$y_n$} (d-1-7)
                  edge node{$\phi_{n}$} (d-2-6)
          (d-1-7) edge node{$\psi$} (d-2-7)
          
          (d-2-1) edge node{$f_1$} (d-2-2)
          (d-2-2) edge node{$f_2$} (d-2-3)
          (d-2-3) edge node{$f_3$} (d-2-4)
          (d-2-4) edge node{$f_4$} (d-2-5)
          (d-2-5) edge node{$f_{n-1}$} (d-2-6)
          (d-2-6) edge node{$f_n$} (d-2-7);
        \end{tikzpicture}
      \end{aligned}
    \end{equation}
    where the top row is an $n$-angle extension of $f_1$.

  \item Dually, the \Def{iterated fiber Toda bracket}
    $\todaff{f_n,\dots,f_2,f_1}\subseteq \C(\Sigma X_1,X_{n + 1})$ consists of
    all morphisms $\Sigma \delta\colon \Sigma X_1\to X_{n + 1}$ where $\delta$
    appears in a commutative diagram
    \begin{center}
      \begin{tikzpicture}
        \smalldiagram{d}{2em}{2em}{
          X_1 & X_2 & X_3 & \cdots & X_{n - 1} & X_n & X_{n + 1}\\
          \Sigma^{-1} X_{n + 1} & W_2 & W_3 & \cdots & W_{n-1} & X_n & X_{n + 1}\\
        };
        
        \path[->, auto, font = \scriptsize]
        (d-1-1) edge node{$f_1$} (d-1-2)
                edge node{$\delta$} (d-2-1)
        (d-1-2) edge node{$f_2$} (d-1-3)
                edge node{$\gamma_2$} (d-2-2)
        (d-1-3) edge node{$f_3$} (d-1-4)
                edge node{$\gamma_3$} (d-2-3)
        (d-1-4) edge node{$f_{n - 2}$} (d-1-5)
        (d-1-5) edge node{$f_{n - 1}$} (d-1-6)
                edge node{$\gamma_{n - 1}$} (d-2-5)
        (d-1-6) edge node{$f_n$} (d-1-7)
        \downequal{d}{1-6}{2-6}
        \downequal{d}{1-7}{2-7}
        
        (d-2-1) edge node{$w_1$} (d-2-2)
        (d-2-2) edge node{$w_2$} (d-2-3)
        (d-2-3) edge node{$w_3$} (d-2-4)
        (d-2-4) edge node{$w_{n - 2}$} (d-2-5)
        (d-2-5) edge node{$w_{n - 1}$} (d-2-6)
        (d-2-6) edge node{$f_n$} (d-2-7); 
      \end{tikzpicture}
    \end{center}
    where the bottom row is an $n$-angle.
    
    \newcommand\sym{Z}
    \newcommand\symc{z}
    
  \item The \Def{fiber-cofiber Toda bracket}
    $\todafc{f_n,\dots,f_2,f_1}\subseteq \C(\Sigma X_1, X_{n + 1})$ consists of
    all composites
    \begin{center}
      \begin{tikzpicture}
        \diagram{d}{1.5em}{1.5em}{
          \Sigma(\beta_1^{n - 1}\cdots \beta_1^2 \beta_1^1) \colon \Sigma X_1 &
          X_{n + 1}\\
        };

        \path[->]
        (d-1-1) edge (d-1-2);
      \end{tikzpicture}
    \end{center}
    where $\beta_1^1, \beta_1^2,\dots,\beta_1^{n - 1}$ appear in a commutative
    diagram of the shape of \autoref{figure:fcbracket}
    \begin{figure}[htbp]
      \centering
      \begin{tikzpicture}
        \smalldiagram{d}{3em}{2.5em}{
          X_1 & X_2 & & & & &\Sigma X_1\\
          \sym^{2}_1 & X_2 & X_3 & \sym^{2}_4 & \cdots & \sym^{2}_{n} & \Sigma
          \sym^{2}_{1}\\
          \sym^{3}_1 & \sym^{3}_2 & X_3 & X_4 & \cdots & \sym^{3}_{n} & \Sigma
          \sym^{3}_1\\
          \sym^{4}_1 & \sym^{4}_2 & \sym^{4}_3 & X_4 & \cdots & \sym^{4}_n & \Sigma
          \sym^{4}_1\\
          \vdots & \vdots & \vdots & \vdots & & \vdots & \vdots \\
          \sym^{{n-1}}_1 & \sym^{{n-1}}_2 &  \sym^{{n-1}}_3 &  \sym^{{n-1}}_4 &
          \cdots & X_n & \Sigma \sym^{{n-1}}_1\\
          & & & & & X_n & X_{n + 1}\\
        };
            
        \path[->, auto, font = \scriptsize]
        (d-1-1) edge node{$f_1$} (d-1-2)
                edge node{$\beta_1^1$} (d-2-1)
        \downequal{d}{1-2}{2-2}
        (d-1-7) edge node{$\Sigma\beta_1^1$} (d-2-7)
        
        (d-2-1) edge node{$\symc^2_1$} (d-2-2)
                edge node{$\beta_1^2$} (d-3-1)
        (d-2-2) edge node{$f_2$} (d-2-3)
                edge node{$\beta_2^2$} (d-3-2)
        (d-2-3) edge node{$\symc_3^2$} (d-2-4)
        \downequal{d}{2-3}{3-3}
        (d-2-4) edge node{$\symc^2_4$} (d-2-5)
                edge node{$\beta_4^2$} (d-3-4)
        (d-2-5) edge node{$\symc^2_{n-1}$} (d-2-6)
        (d-2-6) edge node{$\symc^2_{n}$} (d-2-7)
                edge node{$\beta_n^2$} (d-3-6)
        (d-2-7) edge node{$\Sigma\beta_1^2$} (d-3-7)
        
        (d-3-1) edge node{$\symc^3_1$} (d-3-2)
                edge node{$\beta_1^3$} (d-4-1)
        (d-3-2) edge node{$\symc_2^3$} (d-3-3)
                edge node{$\beta_2^3$} (d-4-2)
        (d-3-3) edge node{$f_3$} (d-3-4)
                edge node{$\beta_3^3$} (d-4-3)
        (d-3-4) edge node{$\symc^3_4$} (d-3-5)
        \downequal{d}{3-4}{4-4}
        (d-3-5) edge node{$\symc^3_{n-1}$} (d-3-6)
        (d-3-6) edge node{$\symc^3_{n}$} (d-3-7)
                edge node{$\beta_{n}^3$} (d-4-6)
        (d-3-7) edge node{$\Sigma\beta_1^3$} (d-4-7)
        
        (d-4-1) edge node{$\symc^4_1$} (d-4-2)
                edge node{$\beta_1^4$} (d-5-1)
        (d-4-2) edge node{$\symc^4_2$} (d-4-3)
                edge node{$\beta_2^4$} (d-5-2)
        (d-4-3) edge node{$\symc^4_3$} (d-4-4)
                edge node{$\beta_3^4$} (d-5-3)
        (d-4-4) edge node{$f_4$} (d-4-5)
                edge node{$\beta_4^4$} (d-5-4)
        (d-4-5) edge node{$\symc^4_{n-1}$} (d-4-6)
        (d-4-6) edge node{$\symc^4_{n}$} (d-4-7)
                edge node{$\beta_{n}^4$} (d-5-6)
        (d-4-7) edge node{$\Sigma\beta_1^4$} (d-5-7)
        
        (d-5-1) edge node{$\beta_{1}^{n-2}$} (d-6-1)
        (d-5-2) edge node{$\beta_{2}^{n-2}$} (d-6-2)
        (d-5-3) edge node{$\beta_{3}^{n-2}$} (d-6-3)
        (d-5-4) edge node{$\beta_{4}^{n-2}$} (d-6-4)
        (d-5-6) edge node{$\beta_{n}^{n-2}$} (d-6-6)
        (d-5-7) edge node{$\Sigma\beta_{1}^{n-2}$} (d-6-7)
        
        (d-6-1) edge node{$\symc^{n-1}_1$} (d-6-2)
        (d-6-2) edge node{$\symc^{n-1}_2$} (d-6-3)
        (d-6-3) edge node{$\symc^{n-1}_3$} (d-6-4)
        (d-6-4) edge node{$\symc^{n-1}_4$} (d-6-5)
        (d-6-5) edge node{$f_{n-1}$} (d-6-6)
        (d-6-6) edge node{$\symc^{n-1}_n$} (d-6-7)
        \downequal{d}{6-6}{7-6}
        (d-6-7) edge node{$\Sigma\beta_1^{n-1}$} (d-7-7)
        
        (d-7-6) edge node{$f_{n}$} (d-7-7);
      \end{tikzpicture}
      \caption{An element of the fiber-cofiber Toda bracket.}
      \label{figure:fcbracket}
    \end{figure}
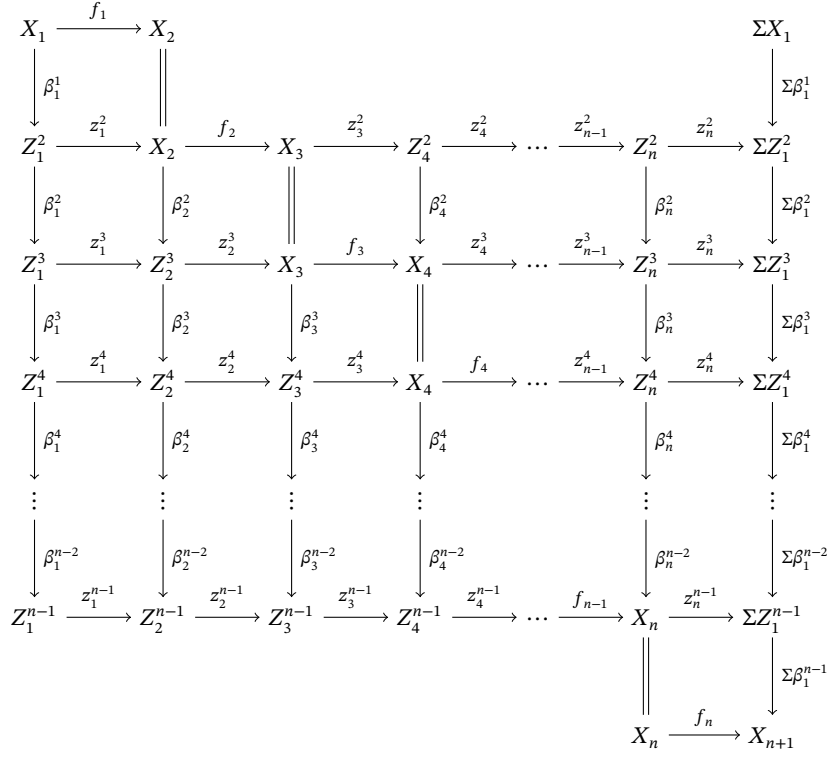
    where row $i$ is an $n$-angle $Z_\bullet^i$ in which $f_i$ occurs as the
    $i^\text{th}$ morphism.
  \end{itemize}
\end{definition}

Note that the vertical morphism in the upper right corner of
\autoref{figure:fcbracket} is there to clearly visualize the factorization of
$\psi$; it is simply the suspension of the morphism in the upper left corner.

\begin{remark}\label{rem:Toda4}
  The iterated fiber and cofiber Toda brackets have a familiar look to the
  triangulated case, however, the diagram defining the fiber-cofiber Toda
  bracket is rather overwhelming to digest. To address this, consider a
  $4$-angulated category, $\C$, and a diagram
  \begin{center}
    \begin{tikzpicture}    
      \diagram{d}{2em}{2em}{
        X_1 & X_2 & X_3 & X_4 & X_5.\\ 
      };
      
      \path[->, font = \scriptsize, auto]
      (d-1-1) edge node{$f_1$} (d-1-2)
      (d-1-2) edge node{$f_2$} (d-1-3)
      (d-1-3) edge node{$f_3$} (d-1-4)
      (d-1-4) edge node{$f_4$} (d-1-5);
    \end{tikzpicture}
  \end{center}
  An element of the fiber-cofiber Toda bracket
  $\psi\in\todafc{f_4,f_3,f_2,f_1}$ is then defined by a diagram
  \begin{center}
    \begin{tikzpicture}    
      \diagram{d}{2em}{3em}{
        X_1 & X_2 & & & \Sigma X_1\\
        Z_1^{2} & X_2 & X_3 & Z_4^{2} & \Sigma Z_1^{2}\\
        Z_1^{3} & Z_2^{3} & X_3 & X_4 & \Sigma Z_1^{3}\\
        &&& X_4 & X_5. \\
      };
      
      \path[->, font = \scriptsize, auto]
      (d-1-1) edge node{$f_1$} (d-1-2)
      
      (d-2-1) edge node{$z_1^2$} (d-2-2)
      (d-2-2) edge node{$f_2$} (d-2-3)
      (d-2-3) edge node{$z_3^2$} (d-2-4)
      (d-2-4) edge node{$z_4^2$} (d-2-5)
      
      (d-3-1) edge node{$z_1^3$} (d-3-2)
      (d-3-2) edge node{$z_2^3$} (d-3-3)
      (d-3-3) edge node{$f_3$} (d-3-4)
      (d-3-4) edge node{$z_4^3$} (d-3-5)
      
      (d-4-4) edge node{$f_4$} (d-4-5)
      
      (d-1-1) edge node{$\beta^1_1$} (d-2-1)
      \downequal{d}{1-2}{2-2}
      (d-1-5) edge node{$\Sigma \beta^1_1$} (d-2-5)
      
      (d-2-1) edge node{$\beta^2_1$} (d-3-1)
      (d-2-2) edge node{$\beta^2_2$} (d-3-2)
      \downequal{d}{2-3}{3-3}   
      (d-2-4) edge node{$\beta^2_3$} (d-3-4)
      (d-2-5) edge node{$\Sigma\beta^2_1$} (d-3-5)
      
      \downequal{d}{3-4}{4-4}
      (d-3-5) edge node{$\Sigma \beta^3_1$} (d-4-5);
    \end{tikzpicture}
  \end{center}
  for some choices of $n$-angles containing $f_2$ and $f_3$ such that
  \begin{equation*}
    \psi = \Sigma(\beta_1^3 \beta_1^2 \beta^1_1).
  \end{equation*}
  The reader may now notice that this definition is similar to that of the
  fiber-cofiber Toda bracket in a triangulated category, in that we simply
  factor our element of the bracket through some $4$-angle extensions of the
  ``middle morphisms'' as opposed to the triangulated fiber-cofiber Toda
  bracket, in which we factor the elements of the bracket through the cofiber of
  the ``middle morphism.''

  Given this, one may wonder whether there are a definition of the Toda brackets
  that more closely resemble the fiber-cofiber Toda bracket in the triangulated
  case; i.e., would it be enough to specify only an extension of one of the
  morphisms in the given diagram, as is the case for the iterated fiber and
  cofiber Toda brackets? The answer turns out to be in the affirmative; we
  return to this in \autoref{sec:intermediate} where we introduce
  $[i]$-intermediate Toda brackets.
\end{remark}

\begin{lemma}\label{lem:InvariantIso}
  The three notions of Toda brackets are invariant under isomorphism. More
  precisely, consider a diagram in $\C$
  \begin{center}
    \begin{tikzpicture}
      \diagram{d}{2em}{2em}{
        X_1 & X_2 & X_3 & \cdots & X_n & X_{n+1}\\
        X'_1 & X'_2 & X'_3 & \cdots & X'_n & X'_{n+1}\\
      }; 
      
      \path[->, auto, font = \scriptsize]
      (d-1-1) edge node{$f_1$} (d-1-2)
              edge node{$g_1$} node[left]{$\cong$} (d-2-1) 
      (d-1-2) edge node{$f_2$} (d-1-3)
              edge node{$g_2$} node[left]{$\cong$} (d-2-2) 
      (d-1-3) edge node{$f_3$} (d-1-4)
              edge node{$g_3$} node[left]{$\cong$} (d-2-3) 
      (d-1-4) edge node{$f_{n-1}$} (d-1-5)
      (d-1-5) edge node{$f_n$} (d-1-6)
              edge node{$g_n$} node[left]{$\cong$} (d-2-5) 
      (d-1-6) edge node{$g_{n+1}$} node[left]{$\cong$} (d-2-6)
      
      (d-2-1) edge node{$f'_1$} (d-2-2)
      (d-2-2) edge node{$f'_2$} (d-2-3)
      (d-2-3) edge node{$f'_3$} (d-2-4)
      (d-2-4) edge node{$f'_{n-1}$} (d-2-5)
      (d-2-5) edge node{$f'_n$} (d-2-6);
    \end{tikzpicture}
  \end{center}
  where each $g_i \colon X_i \to X'_i$ is an isomorphism. Then the subset
  $\todacc{f_n, \ldots, f_2, f_1}$ corresponds to $\todacc{f'_n, \ldots, f'_2,
    f'_1}$ via the bijection $\C(\Sigma X_1, X_{n+1}) \cong \C(\Sigma X'_1,
  X'_{n+1})$ induced by $g_1$ and $g_{n+1}$. The same holds for the iterated
  fiber and fiber-cofiber Toda brackets.
\end{lemma}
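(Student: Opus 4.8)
The plan is to establish the statement for the iterated cofiber bracket in detail and then note that the iterated fiber and fiber-cofiber cases are handled by exactly the same device. Write $h \colon \C(\Sigma X_1, X_{n+1}) \to \C(\Sigma X'_1, X'_{n+1})$ for the bijection $h(\psi) = g_{n+1}\,\psi\,(\Sigma g_1)^{-1}$ induced by $g_1$ and $g_{n+1}$. Since $(g_i^{-1})_i$ is again an isomorphism of diagrams, this time from the $f'_i$ to the $f_i$, and the bijection it induces is precisely $h^{-1}$, it suffices to prove the single inclusion $h\bigl(\todacc{f_n,\dots,f_2,f_1}\bigr) \subseteq \todacc{f'_n,\dots,f'_2,f'_1}$: applying this to $(g_i^{-1})_i$ yields the reverse inclusion, hence equality. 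The same reduction applies verbatim to the two remaining brackets.

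So fix $\psi \in \todacc{f_n,\dots,f_2,f_1}$ together with a witnessing commutative diagram as in \eqref{eq:TodaCC}, whose top row is an $n$-angle $Y_\bullet$ with first morphism $f_1$. First transport the top row: let $Y'_\bullet$ be the $n$-$\Sigma$-sequence obtained from $Y_\bullet$ by replacing $X_1, X_2$ by $X'_1, X'_2$, the first morphism by $f'_1$, the second morphism $y_2$ by $y_2 g_2^{-1}$, the last morphism $y_n$ by $(\Sigma g_1)\,y_n$, and leaving everything else unchanged. Then $(g_1, g_2, \Id, \dots, \Id) \colon Y_\bullet \to Y'_\bullet$ is an isomorphism of $n$-$\Sigma$-sequences — the only squares requiring attention use $f'_1 g_1 = g_2 f_1$ and the very definitions of $y_2 g_2^{-1}$ and $(\Sigma g_1) y_n$ — so $Y'_\bullet$ is an $n$-angle by closure of $\nang$ under isomorphisms of $n$-$\Sigma$-sequences, which is part of \ref{N1}. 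Now assemble the diagram with top row $Y'_\bullet$, bottom row $f'_1, \dots, f'_n$, the two leftmost verticals the identities, verticals $g_j \phi_j \colon Y_j \to X'_j$ for $3 \le j \le n$, and corner morphism $h(\psi)$. Every square commutes: the two leftmost ones by construction of $Y'_\bullet$; the square in columns $j, j+1$ for $3 \le j \le n-1$ because $(g_{j+1}\phi_{j+1}) y_j = g_{j+1}(f_j \phi_j) = f'_j (g_j \phi_j)$ using commutativity of the original diagram and $g_{j+1} f_j = f'_j g_j$; and the rightmost because $h(\psi)\cdot(\Sigma g_1) y_n = g_{n+1}\psi y_n = g_{n+1} f_n \phi_n = f'_n (g_n \phi_n)$. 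This exhibits $h(\psi) \in \todacc{f'_n,\dots,f'_2,f'_1}$.

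The iterated fiber case is formally dual: transport the bottom $n$-angle $W_\bullet$ of a witnessing diagram to an isomorphic $n$-angle whose last morphism is $f'_n$, conjugate each of the verticals $\delta, \gamma_2, \dots, \gamma_{n-1}$ appropriately, and observe that $\Sigma\delta$ is carried to $\Sigma\bigl((\Sigma^{-1}g_{n+1})\,\delta\,g_1^{-1}\bigr) = h(\Sigma\delta)$. For the fiber-cofiber bracket one performs the same transport row by row in \autoref{figure:fcbracket}: each auxiliary $n$-angle $Z^i_\bullet$ is replaced by the isomorphic $n$-angle in which the two objects $X_i, X_{i+1}$ occurring in it are replaced by $X'_i, X'_{i+1}$ (so that its $i$-th morphism becomes $f'_i$), and each $\beta^i_j$ is conjugated by the appropriate isomorphism among $\Id, g_1, \dots, g_{n+1}$; commutativity of every square again reduces to the relations $g_{i+1} f_i = f'_i g_i$, and the composite in the lower-right corner becomes $g_{n+1}\,\Sigma(\beta_1^{n-1}\cdots\beta_1^1)\,(\Sigma g_1)^{-1} = h\bigl(\Sigma(\beta_1^{n-1}\cdots\beta_1^1)\bigr)$.

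The argument is entirely routine; its only ingredients are the closure of $\nang$ under isomorphisms of $n$-$\Sigma$-sequences and the commutativity relations of a diagram morphism. The one point worth a second look is the fiber-cofiber diagram, where one should check that the transported morphisms $\beta^i_j$ still match up in the columns where a row contributes an identity — namely the columns labelled by the objects $X_i$ shared by rows $i-1$ and $i$; there the transport merely turns the identity $X_i = X_i$ into the identity $X'_i = X'_i$, using the single isomorphism $g_i$ consistently, so nothing can go wrong.
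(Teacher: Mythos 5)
Your proof is correct, and it is exactly the routine verification the paper leaves implicit: Lemma~\ref{lem:InvariantIso} is stated without proof, and the intended argument is precisely yours — transport each auxiliary $n$-angle ($Y_\bullet$, $W_\bullet$, resp.\ the rows $Z^i_\bullet$) along an isomorphism of $n$-$\Sigma$-sequences built from the $g_i$, invoke closure of $\nang$ under isomorphisms from \ref{N1}, conjugate the connecting morphisms, and read off that the bracket element is carried to $g_{n+1}\,\psi\,(\Sigma g_1)^{-1}$, with the symmetry reduction via $(g_i^{-1})$ giving equality of the two cosets. The only nitpick is attributional: commutativity of the square in columns $2,3$ of your iterated-cofiber diagram uses $\phi_3 y_2 = f_2$ from the original witnessing diagram (together with $g_3 f_2 = f'_2 g_2$), not merely the construction of $Y'_\bullet$, but this is the same computation as your general square and changes nothing.
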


%
%
\section{The Toda bracket}
\label{sec:toda}

We will show in this section that, under a mild assumption, we may speak of
\emph{the} Toda bracket for a pre-$n$-angulated category, i.e., the three
definitions of Toda brackets introduced in the previous section coincide. It
turns out that explicitly describing an element of the iterated (co)fiber Toda
brackets as an element of the fiber-cofiber Toda bracket is rather difficult,
so, to get around this issue, we will instead show that all three definitions
define the same coset of a certain subgroup of morphisms.

Let us therefore recall the following basic algebraic fact:

\begin{lemma}\label{lemma:cosets}
  Let $G$ be a group, let $H$ be a subgroup of $G$, and let $S$ be a non-empty
  subset of $G$.
  \begin{enumerate}[label={(\arabic*)}, ref={Lemma \thelemma.(\arabic*)}]
  \item \label{lemma-4-1-1} If, for all $s\in S$ and $h\in H$, we have $sh\in S$
    then $S$ is a union of left cosets of $H$; i.e., there is a non-empty subset
    $I\subseteq G$ such that
    \begin{equation*}
      S=\bigcup_{g\in I}gH.
    \end{equation*}
  \item \label{lemma-4-1-2} If, for all $s,s'\in S$, we have $s^{-1}s'\in H$
    then $S\subseteq gH$ for some $g\in G$.
  \item \label{lemma-4-1-3} If $S$ satisfies both (1) and (2) there is some
    $g\in G$ such that
    \begin{equation*}
      S=gH.
    \end{equation*}
  \end{enumerate}
\end{lemma}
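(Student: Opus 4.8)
The plan is to prove the three parts in order by direct verification; none of them requires anything beyond elementary manipulation of left cosets.

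For part (1), I would simply take $I = S$ itself. The hypothesis that $sh \in S$ for all $s \in S$ and $h \in H$ says precisely that $sH \subseteq S$ for every $s \in S$, hence $\bigcup_{s \in S} sH \subseteq S$; conversely, $s = s \cdot 1 \in sH$ shows $S \subseteq \bigcup_{s \in S} sH$. Therefore $S = \bigcup_{s \in S} sH$, and $I = S$ is non-empty because $S$ is. (One could of course pass to a transversal of $H$ meeting $S$ to get the $gH$ pairwise disjoint, but the statement only asks for some non-empty $I$.)

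For part (2), I would fix any $s_0 \in S$, which is possible since $S \neq \emptyset$. For an arbitrary $s \in S$ the hypothesis gives $s_0^{-1} s \in H$, so $s = s_0 (s_0^{-1} s) \in s_0 H$. Hence $S \subseteq s_0 H$, and $g = s_0$ works. For part (3), I would run exactly this argument so as to pick the coset representative inside $S$: take $g = s_0$ for some fixed $s_0 \in S$, giving $S \subseteq s_0 H$ from (2). Now apply the inclusion established in the proof of (1) to $s = s_0$: since $s_0 \in S$ we obtain $s_0 H \subseteq S$. Combining the two inclusions yields $S = s_0 H$, as claimed.

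The only point that needs the slightest attention is in part (3): one must ensure that the coset $gH$ produced by (2) can be taken with $g \in S$, so that the inclusion $gH \subseteq S$ coming from (1) is applicable. This is automatic from the proof of (2), which in fact produces $g = s_0 \in S$. So there is no genuine obstacle here; the whole argument is purely formal once the coset representative is chosen from within $S$.
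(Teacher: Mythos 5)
Your proof is correct, and it is the natural elementary argument: $I=S$ for (1), a fixed $s_0\in S$ as coset representative for (2), and the combination for (3), with the key observation that the representative from (2) lies in $S$ so that (1) gives the reverse inclusion. The paper records this lemma as a basic algebraic fact and omits the proof entirely, so your write-up supplies exactly the argument the authors had in mind.
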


Next, let us note that the iterated (co)fiber Toda brackets retain the nice
property of being independent of the chosen $n$-angle extension.

\begin{lemma}\label{lem:AnyExtension}
  Let $f_1, f_2, \ldots, f_n$ be composable morphisms in $\C$ as in
  \autoref{eq:seq}.
  \begin{enumerate}
  \item The iterated cofiber Toda bracket $\todacc{f_n,\dots,f_2,f_1}$ can be
    computed with any chosen $n$-angle extension of $f_1$.
  \item The iterated fiber Toda bracket $\todaff{f_n,\dots,f_2,f_1}$ can be
    computed with any chosen $n$-angle extension of $f_n$.
  \end{enumerate}
\end{lemma}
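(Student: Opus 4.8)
The plan is to exploit axiom \ref{N3} (the fill-in axiom) together with \autoref{lem:InvariantIso}. I treat part (1); part (2) is dual. Suppose $X_\bullet = (X_1 \xrightarrow{f_1} X_2 \to Y_3 \to \cdots \to Y_n \to \Sigma X_1)$ and $X'_\bullet = (X_1 \xrightarrow{f_1} X_2 \to Y'_3 \to \cdots \to Y'_n \to \Sigma X_1)$ are two $n$-angle extensions of $f_1$. I want to show that a morphism $\psi \colon \Sigma X_1 \to X_{n+1}$ arises from a commutative diagram of the form \autoref{eq:TodaCC} built on $X_\bullet$ if and only if it arises from one built on $X'_\bullet$. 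By symmetry it suffices to prove one implication, so assume $\psi$ comes from a diagram built on $X_\bullet$ with fill-in maps $\phi_3, \phi_4, \ldots, \phi_n$ and $\psi$ into the bottom row $(X_1 \xrightarrow{f_1} X_2 \xrightarrow{f_2} X_3 \to \cdots \to X_n \to X_{n+1})$.

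The key step is to compare $X_\bullet$ and $X'_\bullet$. Both are $n$-angle extensions of $f_1$, so they agree in their first two terms with identity maps $1_{X_1}$ and $1_{X_2}$ in positions $1$ and $2$. Applying \ref{N3} to the partial morphism $(1_{X_1}, 1_{X_2}, -, \ldots, -) \colon X_\bullet \to X'_\bullet$ produces a morphism of $n$-$\Sigma$-sequences $\theta = (1_{X_1}, 1_{X_2}, \theta_3, \ldots, \theta_n)$ with $\Sigma \theta_1 = 1_{\Sigma X_1}$ in position $n+1$. (Note $\theta$ need not be an isomorphism, but that is fine: I only need it as a morphism.) Now precompose: the composite morphism of $n$-$\Sigma$-sequences $\phi \circ \theta$ — where $\phi = (1_{X_1}, 1_{X_2}, \phi_3, \ldots, \phi_n, \psi)$ is the original fill-in from $X_\bullet$ into the bottom $n$-angle — gives a commutative diagram from $X'_\bullet$ into the bottom $n$-angle whose first two vertical maps are identities and whose last vertical map is $\psi$ (since $\psi \circ \Sigma\theta_1 = \psi \circ 1 = \psi$). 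This is precisely a diagram of the form \autoref{eq:TodaCC} witnessing $\psi \in \todacc{f_n, \ldots, f_1}$ computed via $X'_\bullet$.

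Thus every $\psi$ obtained using $X_\bullet$ is also obtained using $X'_\bullet$, and by symmetry the two sets are equal; since $X'_\bullet$ was an arbitrary $n$-angle extension of $f_1$, the bracket is independent of the choice. Part (2) follows by the same argument applied to the fill-in axiom for morphisms $W'_\bullet \to W_\bullet$ between two $n$-angle extensions of $f_n$ — here the relevant morphism of $n$-$\Sigma$-sequences is the identity on the last two terms, and one postcomposes rather than precomposes, with the $\Sigma\delta$ in the first slot transforming correctly. The only mild subtlety — and the point that needs care rather than being genuinely hard — is bookkeeping the indexing: checking that \ref{N3} can indeed be applied with the identity in the first two (resp. last two) slots, which holds because any two $n$-angle extensions of a fixed morphism literally share those terms. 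I expect no serious obstacle; the argument is a routine application of \ref{N3} once the diagrams are set up correctly.
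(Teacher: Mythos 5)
Your argument is essentially the paper's proof: apply \ref{N3} to obtain a morphism between the two $n$-angle extensions (with identities in the first two slots) and compose it with the original fill-in diagram, exactly as the paper does with its morphism $\theta \colon \tild{Y}_\bullet \to Y_\bullet$. One small slip in your write-up: for the composite $\phi \circ \theta$ to start at $X'_\bullet$ you need $\theta \colon X'_\bullet \to X_\bullet$, not $X_\bullet \to X'_\bullet$ as you stated; since \ref{N3} applies equally well in that direction (the shared first square commutes trivially), this is only a bookkeeping correction and not a genuine gap.
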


\begin{proof}
  We prove the first statement; the second is dual. Let
  \begin{center}
    \begin{tikzpicture}    
      \diagram{d}{2em}{2em}{
        X_1 & X_2 & Y_3 & \cdots & Y_n & \Sigma X_n\\ 
      };
      
      \path[->, font = \scriptsize, auto]
      (d-1-1) edge node{$f_1$} (d-1-2)
      (d-1-2) edge node{$y_2$} (d-1-3)
      (d-1-3) edge node{$y_3$} (d-1-4)
      (d-1-4) edge node{$y_{n - 1}$} (d-1-5)
      (d-1-5) edge node{$y_n$} (d-1-6);
    \end{tikzpicture}
  \end{center}
  be an $n$-angle extension of $f_1$ and let $\tild{Y}_{\bullet}$ be another
  such extension. Furthermore, let $\psi \in
  \toda{f_n,\dots,f_2,f_1}_{\text{cc}, Y_{\bullet}}$ be a representative of the
  Toda bracket computed with the \mbox{$n$-angle} extension $Y_{\bullet}$,
  exhibited by a diagram which is the bottom part of
  \begin{center}
    \begin{tikzpicture}
      \diagram{d}{2em}{2em}{
        X_1 & X_2 & \tild{Y}_3 & \cdots & \tild{Y}_n & \Sigma X_1\\
        X_1 & X_2 & Y_3 & \cdots & Y_n & \Sigma X_1\\
        X_1 & X_2 & X_3 & \cdots & X_n & X_{n+1}.\\
      };
      
      \path[->, auto, font = \scriptsize]
      (d-1-1) edge node {$f_1$} (d-1-2)
      \downequal{d}{1-1}{2-1}
      (d-1-2) edge node {$\tild{y}_2$} (d-1-3)
      \downequal{d}{1-2}{2-2}
      (d-1-3) edge node {$\tild{y}_3$} (d-1-4)
              edge[densely dashed] node {$\theta_3$} (d-2-3)
      (d-1-4) edge node {$\tild{y}_{n-1}$} (d-1-5)
      (d-1-5) edge node {$\tild{y}_n$} (d-1-6)
              edge[densely dashed] node {$\theta_n$} (d-2-5)
      \downequal{d}{1-6}{2-6}
      
      (d-2-1) edge node {$f_1$} (d-2-2)
      \downequal{d}{2-1}{3-1}
      (d-2-2) edge node {$y_2$} (d-2-3)
      \downequal{d}{2-2}{3-2}
      (d-2-3) edge node {$y_3$} (d-2-4)
              edge[densely dashed] node {$\phi_3$} (d-3-3)
      (d-2-4) edge node {$y_{n-1}$} (d-2-5)
      (d-2-5) edge node {$y_n$} (d-2-6)
              edge[densely dashed] node {$\phi_n$} (d-3-5)
      (d-2-6) edge[densely dashed] node {$\psi$} (d-3-6)
      
      (d-3-1) edge node {$f_1$} (d-3-2)
      (d-3-2) edge node {$f_2$} (d-3-3)
      (d-3-3) edge node {$f_3$} (d-3-4)
      (d-3-4) edge node {$f_{n-1}$} (d-3-5)
      (d-3-5) edge node {$f_n$} (d-3-6);
    \end{tikzpicture}
  \end{center}
  By the morphism axiom \ref{N3}, there exists a morphism of $n$-angles $\theta
  \colon \tild Y_{\bullet} \to {Y}_{\bullet}$ as in the top part of the diagram.
  Now the composite diagram from the top row to the bottom row shows $\psi \in
  \toda{f_n,\dots,f_2,f_1}_{\text{cc}, \tild{Y}_{\bullet}}$. Since this holds
  for any two $n$-angle extensions of $f_1$, we obtain the equality
  $\toda{f_n,\dots,f_2,f_1}_{\text{cc}, Y_{\bullet}} =
  \todacc{f_n,\dots,f_2,f_1}$.
\end{proof}

\begin{remark}\label{rem:extension_dependence}
  The fact of the preceding lemma may be considered a rather nice ``edge case''
  phenomenon; in fact, the fiber-cofiber Toda bracket is dependent on the choice
  of $n$-angle extension. To see this, suppose that $n=5$ and that we have a
  diagram
  \begin{center}
    \begin{tikzpicture}    
      \diagram{d}{2em}{3em}{
        X_1 & X_2 & X_3 & X_4 & X_5 & X_6\\ 
      };
      
      \path[->, font = \scriptsize, auto]
      (d-1-1) edge node{$f_1$} (d-1-2)
      (d-1-2) edge node{$f_2$} (d-1-3)
      (d-1-3) edge node{$f_3=0$} (d-1-4)
      (d-1-4) edge node{$f_4$} (d-1-5)
      (d-1-5) edge node{$f_5$} (d-1-6);
    \end{tikzpicture}
  \end{center}
  such that $f_{i+1}f_i=0$ for all $1\leq i\leq 4$. It is a consequence of
  \autoref{prop:brackets_are_cosets} and \autoref{prop:fi_is_zero} that in this
  case, in fact,
  \begin{equation*}
    \todafc{f_5,f_4,0,f_2,f_1}=(f_5)_*\C(\Sigma X_1,X_5)+(\Sigma f_1)^*\C(\Sigma
    X_2,X_6)\subseteq \C(\Sigma X_1,X_6)
  \end{equation*}
  and, so, supposing this subgroup is non-trivial, the fiber-cofiber Toda
  bracket will contain an element different from the zero morphism; call it
  $\psi$. However, if we suppose that all elements of the fiber-cofiber Toda
  bracket may be calculated by any fixed choices of $5$-angle extensions of
  $f_2$, $f_3$, and $f_4$, then
  by the diagram
  \begin{center}
    \begin{tikzpicture}
      \smalldiagram{d}{2em}{3em}{
        X_1 & X_2 & & & & \Sigma X_1\\
        Z^{2}_{1} & X_2 & X_3 & Z^{2}_4 & Z^{2}_5 & \Sigma Z^2_1 \\
        0 & X_3 & X_3 & X_4 & X_4  & 0  \\
        Z^{4}_{1} & Z^{4}_{2} & Z^{4}_{3} & X_4 & X_5 & \Sigma Z_1^4 \\
        & & & & X_5 & X_{6}\\
      };
      
      \path[->, auto, font = \scriptsize]
      (d-1-1) edge node{$f_1$} (d-1-2)
              edge node{$\beta_{1}^1$} (d-2-1)
      \downequal{d}{1-2}{2-2}
      (d-1-6) edge node{$\Sigma \beta_{1}^1$} (d-2-6)

      (d-2-1) edge (d-2-2)
              edge (d-3-1)
      (d-2-2) edge node{$f_2$} (d-2-3)
              edge node{$f_2$} (d-3-2)
      (d-2-3) edge (d-2-4)
      \downequal{d}{2-3}{3-3}
      (d-2-4) edge (d-2-5)
              edge node{$\beta_4^2$} (d-3-4)
      (d-2-5) edge (d-2-6)
              edge node{$\beta_5^2$} (d-3-5)
      (d-2-6) edge (d-3-6)

      (d-3-1) edge (d-3-2)
              edge (d-4-1)
      (d-3-2) edge node{$1$} (d-3-3)
              edge node{$\beta_{2}^3$} (d-4-2)
      (d-3-3) edge node{$0$} (d-3-4)
              edge node{$\beta_{3}^3$} (d-4-3)
      (d-3-4) edge node{$1$} (d-3-5)
      \downequal{d}{3-4}{4-4}
      (d-3-5) edge (d-3-6)
              edge node{$f_4$} (d-4-5)
      (d-3-6) edge (d-4-6)

      (d-4-1) edge (d-4-2)
      (d-4-2) edge (d-4-3)
      (d-4-3) edge (d-4-4)
      (d-4-4) edge node{$f_4$} (d-4-5)
      (d-4-5) edge (d-4-6)
      \downequal{d}{4-5}{5-5}
      (d-4-6) edge node{$\Sigma\beta_1^4$} (d-5-6)
      
      (d-5-5) edge node{$f_5$} (d-5-6);
    \end{tikzpicture}
  \end{center}
  since the middle row is the valid 5-angle extension $(TX_3)_\bullet[-1]\oplus
  (TX_4)_\bullet[-3]$. However, this would imply that $\psi = 0$, a
  contradiction.
\end{remark}

Next we note that the fiber-cofiber Toda bracket is always contained in both the
iterated fiber and iterated cofiber Toda brackets.

\begin{lemma}\label{lem:fc_in_cc}
  Let $f_1, f_2, \ldots, f_n$ be composable morphisms in $\C$ as in
  \autoref{eq:seq}. Then
  \begin{equation*}
    \todafc{f_n,\dots,f_2,f_1}\subseteq \todacc{f_n,\dots,f_2,f_1}\cap
    \todaff{f_n,\dots,f_2,f_1}.
  \end{equation*}
\end{lemma}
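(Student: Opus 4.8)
The plan is to prove the two inclusions $\todafc{f_n,\dots,f_1}\subseteq\todacc{f_n,\dots,f_1}$ and $\todafc{f_n,\dots,f_1}\subseteq\todaff{f_n,\dots,f_1}$ separately, the second being the mirror image of the first; I describe the first. Starting from $\psi\in\todafc{f_n,\dots,f_1}$ witnessed by a diagram of the shape of \autoref{figure:fcbracket} with rows the $n$-angles $Z_\bullet^2,\dots,Z_\bullet^{n-1}$, the first observation is that this single commutative diagram already packages, for each $2\le i\le n-2$, a morphism of $n$-angles $\beta^i\colon Z_\bullet^i\to Z_\bullet^{i+1}$ (row $i$ mapping down to row $i+1$) whose component at position $i+1$ is $\Id_{X_{i+1}}$ and whose first component is $\beta_1^i$. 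The only data not yet of this form sit at the top: there one has merely $\beta_1^1\colon X_1\to Z_1^2$, the identity of $X_2$, and the relation $z_1^2\beta_1^1=f_1$. The first real step is therefore to feed an honest $n$-angle extension of $f_1$ into this tower.

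To that end I would choose any $n$-angle $Y_\bullet$ whose first morphism is $f_1$ (one exists by \ref{N1c}), with morphisms $f_1,y_2,\dots,y_n$, and apply the morphism axiom \ref{N3} to the square with top row $Y_\bullet$, bottom row $Z_\bullet^2$, and first two vertical maps $\beta_1^1$ and $\Id_{X_2}$ (admissible because $z_1^2\beta_1^1=f_1$) to obtain a morphism of $n$-angles $\theta\colon Y_\bullet\to Z_\bullet^2$ with $\theta_1=\beta_1^1$ and $\theta_2=\Id_{X_2}$. For $1\le k\le n-2$ put $\Theta^{(k)}\coloneq\beta^k\cdots\beta^2\,\theta\colon Y_\bullet\to Z_\bullet^{k+1}$, a morphism of $n$-angles whose first component is $\beta_1^k\cdots\beta_1^1$ and whose last component is the suspension of that. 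Since $Z_\bullet^j$ carries $X_j$ at position $j$, and $Z_\bullet^{n-1}$ also carries $X_n$ at position $n$, I can set $\phi_j\coloneq(\Theta^{(j-1)})_j\colon Y_j\to X_j$ for $3\le j\le n-1$, $\phi_n\coloneq(\Theta^{(n-2)})_n\colon Y_n\to X_n$, and $\phi_1=\phi_2=\Id$; moreover, since the bottom-right vertical map of \autoref{figure:fcbracket} is $\Sigma\beta_1^{n-1}$,
\[
  (\Sigma\beta_1^{n-1})\circ(\Theta^{(n-2)})_{n+1}=\Sigma\bigl(\beta_1^{n-1}\beta_1^{n-2}\cdots\beta_1^1\bigr)=\psi .
\]
The claim is then that $Y_\bullet$, the maps $\phi_3,\dots,\phi_n$, and $\psi$ in the final position constitute a commutative diagram of the shape of \autoref{eq:TodaCC} over \autoref{eq:seq}, which establishes $\psi\in\todacc{f_n,\dots,f_1}$.

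Checking this commutativity is the only substantial step. The squares at positions $(1,2)$ commute trivially. For $2\le j\le n-2$ I would combine three facts: the $(j,j+1)$-square of the morphism of $n$-angles $\Theta^{(j)}$, which gives $\phi_{j+1}y_j=z_j^{j+1}(\Theta^{(j)})_j$ with $z_j^{j+1}$ the $j$-th morphism of $Z_\bullet^{j+1}$; the factorization $(\Theta^{(j)})_j=(\beta^j)_j(\Theta^{(j-1)})_j=\beta_j^j\phi_j$; and the $(j,j+1)$-square of $\beta^j$, whose top edge is $f_j$ (the $j$-th morphism of $Z_\bullet^j$) and whose right edge is $\Id_{X_{j+1}}$, which gives $z_j^{j+1}\beta_j^j=f_j$; together these yield $\phi_{j+1}y_j=f_j\phi_j$ (with $\phi_2=\Id$ when $j=2$). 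For the last two squares, $\phi_{n-1}$, $\phi_n$, and $\psi$ all descend from the single morphism $\Theta^{(n-2)}\colon Y_\bullet\to Z_\bullet^{n-1}$: its $(n-1,n)$-square gives $\phi_n y_{n-1}=f_{n-1}\phi_{n-1}$, and its $(n,n+1)$-square gives $(\Theta^{(n-2)})_{n+1}y_n=z_n^{n-1}\phi_n$, so that $\psi y_n=(\Sigma\beta_1^{n-1})z_n^{n-1}\phi_n=f_n\phi_n$ by the bottom square of \autoref{figure:fcbracket}.

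I expect the difficulty to lie entirely in the bookkeeping rather than in any conceptual point: one must keep track of which object ($X_j$, $Z_j^i$, or a suspension) occupies each position of each $Z_\bullet^i$, and --- since the $\phi_j$ are read off from \emph{different} composites $\Theta^{(j-1)}$ rather than from one morphism of $n$-angles --- verify the squares of \autoref{eq:TodaCC} individually, feeding in the ``filler'' identities $f_j=z_j^{j+1}\beta_j^j$ and $f_n=(\Sigma\beta_1^{n-1})z_n^{n-1}$ that \autoref{figure:fcbracket} builds in. For the inclusion $\todafc{f_n,\dots,f_1}\subseteq\todaff{f_n,\dots,f_1}$ I would run the mirror argument: choose an $n$-angle $W_\bullet$ having $f_n$ as its \emph{last} morphism, use the dual of \ref{N3} (valid in a pre-$n$-angulated category since \ref{N2} lets one rotate any given square into the first two positions) to produce a morphism $\rho\colon Z_\bullet^{n-1}\to W_\bullet$, and read the maps $\delta,\gamma_2,\dots,\gamma_{n-1}$ of the iterated-fiber diagram off the composites $\rho\,\beta^{n-2}\cdots\beta^i$ together with $\beta_1^1$; equivalently, apply the already-established inclusion in the opposite category.
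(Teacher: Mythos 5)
Your proposal is correct and follows essentially the same route as the paper's proof: extend $f_1$ to an $n$-angle, use \ref{N3} (via the top square $z_1^2\beta_1^1=f_1$) to map it into the fiber-cofiber tower, and take the composites of the vertical maps down the columns as the $\phi_j$ of the iterated cofiber diagram, with the dual argument giving the iterated fiber inclusion. Your $(\Theta^{(j-1)})_j$ are exactly the paper's $\phi_i=\beta_i^{i-2}\cdots\beta_i^2\beta_i^1$; you merely spell out the square-by-square commutativity check that the paper leaves implicit.
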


\begin{proof}
  An element of $\psi\in \todafc{f_n,\dots,f_2,f_1}$ is given by a commutative
  diagram as in \autoref{figure:fcbracket} such that
  \begin{equation*}
    \psi = \Sigma(\beta_1^{n-1} \beta_1^{n-2} \cdots \beta_1^2 \beta_1^1).
  \end{equation*}
  Choose an $n$-angle extension $Z_\bullet^1$ of $f_1$ and some fill-in
  \begin{center}
    \begin{tikzpicture}
      \diagram{d}{2em}{2em}{
        X_1 & X_2 & Z^{1}_3 & Z^{1}_4 & \cdots & Z_{n}^{1} & \Sigma X_1\\
        Z^{2}_{1} & X_2 & X_3 & Z^{2}_4 & \cdots & Z^{2}_{n} & \Sigma Z^{2}_{1}.\\
      };
      
      \path[->, auto, font = \scriptsize]
      (d-1-1) edge node{$f_1$} (d-1-2)
      edge node{$\beta_1^1$} (d-2-1)
      \downequal{d}{1-2}{2-2}
      (d-1-2) edge (d-1-3)
      (d-1-3) edge (d-1-4)
              edge[densely dashed] node{$\beta^1_3$} (d-2-3)
      (d-1-4) edge (d-1-5)
              edge[densely dashed] node{$\beta^1_4$} (d-2-4)
      (d-1-5) edge (d-1-6)
      (d-1-6) edge (d-1-7)
              edge[densely dashed] node{$\beta^1_n$} (d-2-6)
      (d-1-7) edge node{$\Sigma \beta^1_{1}$} (d-2-7)
      
      (d-2-1) edge (d-2-2)
      (d-2-2) edge node{$f_2$} (d-2-3)
      (d-2-3) edge (d-2-4)
      (d-2-4) edge (d-2-5)
      (d-2-5) edge (d-2-6)
      (d-2-6) edge (d-2-7);
    \end{tikzpicture}
  \end{center}
  We define
  \begin{equation*}
    \phi_{i}\coloneq \beta^{i-2}_i \beta^{i-3}_i \cdots \beta^{2}_{i} \beta^1_i
  \end{equation*}
  to obtain a commutative diagram
  \begin{center}
    \begin{tikzpicture}
      \diagram{d}{2em}{2em}{
        X_1 & X_2 & Z^{1}_3 & Z^{1}_4 & \cdots & Z_{n}^{1} & \Sigma X_1\\
        X_1 & X_2 & X_3 & X_4 & \cdots & X_{n} & X_{n+1}.\\
      };
      
      \path[->, auto, font = \scriptsize]
      (d-1-1) edge node{$f_1$} (d-1-2)
      \downequal{d}{1-1}{2-1}
      (d-1-2) edge (d-1-3)
      \downequal{d}{1-2}{2-2}
      (d-1-3) edge (d-1-4)
              edge node{$\phi_3$} (d-2-3)
      (d-1-4) edge (d-1-5)
              edge node{$\phi_4$} (d-2-4)
      (d-1-5) edge (d-1-6)
      (d-1-6) edge (d-1-7)
              edge node{$\phi_{n}$} (d-2-6)
      (d-1-7) edge node{$\psi$} (d-2-7)
      
      (d-2-1) edge node{$f_1$} (d-2-2)
      (d-2-2) edge node{$f_2$} (d-2-3)
      (d-2-3) edge node{$f_3$} (d-2-4)
      (d-2-4) edge node{$f_4$} (d-2-5)
      (d-2-5) edge node{$f_{n-1}$} (d-2-6)
      (d-2-6) edge node{$f_{n}$} (d-2-7);
    \end{tikzpicture}
  \end{center}
  Thus $\psi\in\todacc{f_n,f_{n-1},\dots,f_2,f_1}$. The dual argument shows
  $\psi\in\todaff{f_n,f_{n-1},\dots,f_2,f_1}$.
\end{proof}

\begin{lemma}\label{lem:nonempty_fcbracket}
  Let $f_1, f_2, \ldots, f_n$ be composable morphisms in $\C$ as in
  \autoref{eq:seq}. Then the fiber-cofiber Toda bracket
  $\todafc{f_n,\dots,f_2,f_1}$ is non-empty if and only if $f_{i+1}f_i=0$ for
  $1\leq i\leq n-1$.
\end{lemma}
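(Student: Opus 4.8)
My plan is to prove the equivalence by treating the two implications separately, and I will freely use two standard facts about pre-$n$-angulated categories, neither of which needs the fourth axiom: first, that consecutive morphisms of an $n$-angle compose to zero (derivable from \ref{N1}, \ref{N2} and \ref{N3} by mapping the trivial $n$-angle $(TX)_\bullet$ into a suitable rotation of the given $n$-angle, just as for triangulated categories); and second, that applying $\C(T,-)$ or $\C(-,T)$ to an $n$-angle $A_1 \xrightarrow{a_1} \cdots \xrightarrow{a_n} \Sigma A_1$ yields a long exact sequence \cite{GKO}. The second fact enters through its usual corollary: for consecutive morphisms $A \xrightarrow{a} B \xrightarrow{b} C$ occurring in an $n$-angle, any $h$ with $bh = 0$ factors through $a$, and dually any $h$ with $ha = 0$ factors through $b$.

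For the forward implication I would start from a witnessing diagram of the shape of \autoref{figure:fcbracket}, in which $Z^i_\bullet$ is an $n$-angle having $f_i$ as its $i$-th morphism, and read off each relation $f_{i+1}f_i = 0$ by locating the right commuting square. For $2 \le i \le n-1$, the square in rows $i, i+1$ and columns $i+1, i+2$ has left edge the displayed identity $\Id_{X_{i+1}}$, top edge the $(i+1)$-st morphism $z^i_{i+1}$ of $Z^i_\bullet$, and bottom edge $f_{i+1}$; so commutativity expresses $f_{i+1}$ as a left multiple of $z^i_{i+1}$, and since $z^i_{i+1}f_i = 0$ (consecutive morphisms of $Z^i_\bullet$) we get $f_{i+1}f_i = 0$. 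The case $i = 1$ I would handle symmetrically, using instead the top-left square of \autoref{figure:fcbracket}, the first morphism $z^2_1$ of $Z^2_\bullet$ (which satisfies $f_2 z^2_1 = 0$), and the factorization $f_1 = z^2_1 \beta_1^1$ that square provides.

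The converse is where the work lies. Assuming $f_{i+1}f_i = 0$ for all $i$, I would build a diagram of the shape of \autoref{figure:fcbracket} as follows. Using \ref{N1} and \ref{N2}, pick for each $i$ an $n$-angle $Z^i_\bullet$ with $f_i$ in position $i$ (write $z^i_j$ for its $j$-th morphism, so $z^i_i = f_i$). Then, for each $i$, I want a morphism of $n$-angles $\beta^i \colon Z^i_\bullet \to Z^{i+1}_\bullet$ whose component in position $i+1$ is the identity of $X_{i+1}$ --- the object sitting there in both angles, as target of $f_i$ on one side and source of $f_{i+1}$ on the other. To construct it I would rotate both $Z^i_\bullet$ and $Z^{i+1}_\bullet$ so that $X_{i+1}$ moves into first position (still $n$-angles, by \ref{N2}), so that their first morphisms become $z^i_{i+1}$ and $f_{i+1}$; then $f_{i+1}f_i = 0$ together with the factorization corollary yields a map $\phi$ with $\phi z^i_{i+1} = f_{i+1}$, and the pair $(\Id_{X_{i+1}}, \phi)$ feeds into the fill-in axiom \ref{N3}, producing the remaining components. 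Rotating back gives $\beta^i$. Finally I would assemble the $Z^i_\bullet$, the $\beta^i$, and the staircase of identities $\Id_{X_2}, \dots, \Id_{X_n}$ into the required diagram (its squares are precisely those of the $\beta^i$, which commute), so that $\Sigma(\beta_1^{n-1} \cdots \beta_1^1)$ lies in the fiber-cofiber bracket, which is therefore non-empty.

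The main obstacle I anticipate is the bookkeeping in \autoref{figure:fcbracket}: correctly identifying which vertical edges are identities and which squares are present, and handling the two ``end'' steps consistently with the interior ones. The one genuinely non-formal ingredient is the factorization corollary of the long exact sequence; everything else is the rotate-then-apply-\ref{N3} manoeuvre, which is the real engine of the converse.
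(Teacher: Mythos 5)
Your proposal is correct and follows essentially the same route as the paper: the relations $f_{i+1}f_i=0$ are read off from commuting squares of \autoref{figure:fcbracket} together with the vanishing of consecutive composites in an $n$-angle, and the converse is the paper's staircase construction, factoring $f_{i+1}$ through the weak cokernel of $f_i$ and then rotating and applying \ref{N3} to produce the morphisms of $n$-angles $\beta^i_\bullet$. The only cosmetic difference is which squares you use to read off the relations (one column to the right of the paper's choice) and that you build full fill-ins at the two end steps where only partial data is needed; neither affects correctness.
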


\begin{proof}
  Suppose $f_{i+1}f_i=0$ for $1\leq i\leq n-1$. We begin by setting up the
  ``staircase diagram'' that make up the spine of the diagram representing an
  element of the fiber-cofiber Toda bracket $\todafc{f_n,\dots,f_2,f_1}$.
  \begin{center}
    \begin{tikzpicture}
      \smalldiagram{d}{1.5em}{1.5em}{
        X_1 & X_2\\
        & X_2 & X_3 \\
        && X_3 & X_4\\
        &&& X_4 & X_5\\
        &&&& \vdots\\
        &&&&& \cdots & X_n\\
        &&&&&& X_n & X_{n + 1}.\\
      };
            
      \path[->, auto, font = \scriptsize]
      (d-1-1) edge node{$f_1$} (d-1-2)
      \downequal{d}{1-2}{2-2}
      (d-2-2) edge node{$f_2$} (d-2-3)
      \downequal{d}{2-3}{3-3}
      (d-3-3) edge node{$f_3$} (d-3-4)
      \downequal{d}{3-4}{4-4}
      (d-4-4) edge node{$f_4$} (d-4-5)
      \downequal{d}{4-5}{5-5}
      (d-6-6) edge node{$f_{n-1}$} (d-6-7)
      \downequal{d}{6-7}{7-7}
      (d-7-7) edge node{$f_{n}$} (d-7-8);
    \end{tikzpicture}
  \end{center}
  At each ``step'' of the staircase we extend the morphism to an $n$-angle.
  Since $f_{i+1}f_i=0$, we know that $f_{i+1}$ factors through the cofiber of
  $f_i$ and a morphism
  \begin{center}
    \begin{tikzpicture}
      \diagram{d}{1.5em}{1.5em}{
        \beta_{i+2}^i\colon Z_{i+2}^{i} & X_{i+2}.\\
      };
      
      \path[->]
      (d-1-1) edge (d-1-2);
    \end{tikzpicture}
  \end{center}
  Note that in the case of $i=n-1$, we label the induced morphism
  $\Sigma\beta^{n-1}_1$. Since we now have two consecutive morphisms forming a
  commutative square between two $n$-angles, we may now apply axioms \ref{N2}
  and \ref{N3} to claim that there exists a morphism of $n$-angles
  $\beta^i_\bullet\colon Z_\bullet^{i}\to Z_\bullet^{i+1}$. Filling in these
  morphisms of $n$-angles in the staircase diagram above, we achieve a diagram
  representing some element of $\todafc{f_n,\dots,f_2,f_1}$, namely the morphism
  $\psi\coloneq\Sigma(\beta_1^{n-1}\beta_1^{n-2}\cdots \beta_{1}^1)$, and thus we
  may conclude that this is a non-empty set.
  
  Now suppose $\todafc{f_n,\dots,f_2,f_1}\neq\emptyset$. Then we have a diagram
  in the shape of \autoref{figure:fcbracket}, and so we may simply read off the
  diagram that
  \begin{equation*}
    f_{i+1}f_i = f_{i+1}z_{i}^{i+1}\beta_{i}^i=0
  \end{equation*}
  for $1\leq i\leq n-2$. For the case of $i=n-1$ we see that
  \begin{equation*}
    f_nf_{n-1} = (\Sigma\beta_1^{n-1})z_n^{n-1}f_{n-1}=0.
  \end{equation*}
  This concludes the proof.
\end{proof}

\begin{corollary}\label{cor:ffcc_nonempty}
  Let $f_1, f_2, \ldots, f_n$ be composable morphisms in $\C$ as in
  \autoref{eq:seq}. If $f_{i+1}f_i=0$ for $1\leq i\leq n-1$ then the iterated
  (co)fiber Toda bracket are non-empty.
\end{corollary}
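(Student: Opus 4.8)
The plan is simply to chain together the two lemmas immediately preceding this corollary. First I would invoke \autoref{lem:nonempty_fcbracket}: the hypothesis $f_{i+1}f_i = 0$ for $1 \leq i \leq n-1$ is exactly the stated criterion for the fiber-cofiber Toda bracket to be non-empty, so $\todafc{f_n,\dots,f_2,f_1} \neq \emptyset$. Concretely, the proof of that lemma produces an explicit element $\psi = \Sigma(\beta_1^{n-1}\beta_1^{n-2}\cdots\beta_1^1)$ by building the staircase diagram of \autoref{figure:fcbracket}, so there is nothing further to check on that side.

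Then I would apply \autoref{lem:fc_in_cc}, which gives the containment
\begin{equation*}
  \todafc{f_n,\dots,f_2,f_1} \subseteq \todacc{f_n,\dots,f_2,f_1} \cap \todaff{f_n,\dots,f_2,f_1}.
\end{equation*}
Since the left-hand side is non-empty, so is each of $\todacc{f_n,\dots,f_2,f_1}$ and $\todaff{f_n,\dots,f_2,f_1}$; indeed the single element $\psi$ constructed above already witnesses this. There is no real obstacle here — the corollary is a one-line consequence of the preceding two lemmas, and the only thing to be careful about is citing them in the correct order (non-emptiness of the fiber-cofiber bracket first, then the containment).
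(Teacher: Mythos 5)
Your proposal is correct and is essentially identical to the paper's own proof, which likewise combines \autoref{lem:nonempty_fcbracket} with the containment of \autoref{lem:fc_in_cc} to conclude that both iterated brackets are non-empty. Nothing further is needed.
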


\begin{proof}
  By \autoref{lem:fc_in_cc} and \autoref{lem:nonempty_fcbracket}, 
  \begin{equation*}
    \emptyset\neq \todafc{f_n,\dots,f_2,f_1}\subseteq \todacc{f_n,\dots,f_2,f_1}\cap
    \todaff{f_n,\dots,f_2,f_1}.
  \end{equation*}
  Thus both $\todacc{f_n,\dots,f_2,f_1}$ and $\todaff{f_n,\dots,f_2,f_1}$ must be non-empty.
\end{proof}

The next lemma will help us prove that the Toda brackets are cosets of the same subgroup.

\begin{lemma}\label{lem:fc_is_union}
  Let $f_1, f_2, \ldots, f_n$ be composable morphisms in $\C$ as in
  \autoref{eq:seq} and let $\psi\in \todafc{f_n,\dots,f_2,f_1}$. For arbitrary
  morphisms $h\colon \Sigma X_1 \to X_n$ and $k\colon \Sigma X_2 \to X_{n + 1}$,
  we have
  \begin{equation*}
    \psi + f_n h + k\Sigma f_1 \in \todafc{f_n,\dots,f_2,f_1}.
  \end{equation*}
\end{lemma}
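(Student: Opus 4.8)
The plan is to modify a given fiber--cofiber diagram representing $\psi$ by adding the correction terms $f_n h$ and $k \Sigma f_1$ directly into the data, using the two ``trivial'' $n$-angles $(TX_n)_\bullet$ and $(TX_2)_\bullet$ to absorb them. The key observation is that the only rows of \autoref{figure:fcbracket} whose vertical maps directly touch $X_n$ and $\Sigma X_1$ (respectively $X_2$ and $\Sigma X_2$) are the last row $Z_\bullet^{n-1}$ and the first row, so that the correction terms can be injected by altering only a controlled portion of the diagram.

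First I would handle the term $f_n h$ with $h \colon \Sigma X_1 \to X_n$. In \autoref{figure:fcbracket}, the bottom portion of the staircase reads $Z_1^{n-1} \xrightarrow{\beta_1^{n-1}} X_n \xrightarrow{f_n} X_{n+1}$, with $\Sigma \beta_1^{n-1} \colon \Sigma X_1 \to X_n$ recorded as a vertical map and $\psi = \Sigma(\beta_1^{n-1} \cdots \beta_1^1)$. The idea is to replace the $n$-angle $Z_\bullet^{n-1}$ (in which $f_{n-1}$ occurs as the $(n-1)$-st morphism) by its direct sum with $(TX_n)_\bullet[k]$ for the appropriate rotation $k$ that places an identity $X_n \xrightarrow{1} X_n$ in the $(n-1)$-st and $n$-th slots; by \ref{N1a} this is again an $n$-angle with $f_{n-1}$ occurring in the right position. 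The new last vertical map can then be taken to be $\Sigma \beta_1^{n-1} + (\text{component landing in the } X_n \text{ summand via } h)$, while the maps into row $n-1$ from row $n-2$ are adjusted by composing with the canonical inclusion. One checks that commutativity of the augmented diagram still holds (the new square involving $f_n$ commutes since $f_n \cdot 1 = f_n$ on the trivial summand), and that the resulting composite is $\Sigma(\beta_1^{n-1}\cdots\beta_1^1) + f_n h = \psi + f_n h$. Here I should be careful that the $\beta$'s in rows above $n-1$ remain unchanged and only the last fill-in and its incoming verticals are modified; the morphism-of-$n$-angles axiom \ref{N3} is not even needed since we are constructing the fill-in by hand.

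Next I would handle the term $k \Sigma f_1$ with $k \colon \Sigma X_2 \to X_{n+1}$, dually, by modifying the \emph{top} of the diagram. The first row is $Z_\bullet^1$, an $n$-angle extension of $f_1$, i.e.\ $X_1 \xrightarrow{f_1} X_2 \to \cdots \to \Sigma X_1$; we replace it by $Z_\bullet^1 \oplus (TX_2)_\bullet$ so that an identity $X_2 \xrightarrow{1} X_2$ sits in the first two slots (matching the shape where $X_1 = X_2$ entries appear), and correspondingly alter the vertical map $\beta_1^1 \colon X_1 \to Z_1^2$ and the map $\Sigma X_1 \to \Sigma Z_1^2$ to route a copy of $\Sigma f_1 \colon \Sigma X_1 \to \Sigma X_2$ through the trivial summand, eventually feeding $k$ into the final factorization. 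The upshot is that the new composite is $\psi + k \Sigma f_1$. Finally, since $\todafc{f_n,\dots,f_2,f_1}$ is non-empty, applying both modifications in succession (they affect disjoint rows) yields $\psi + f_n h + k \Sigma f_1 \in \todafc{f_n,\dots,f_2,f_1}$, as claimed.

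The main obstacle is bookkeeping: one must pin down exactly which rotation $(TX_n)_\bullet[k]$ and $(TX_2)_\bullet[k']$ to take so that the inserted identity morphisms land in the slots where \autoref{figure:fcbracket} already has repeated entries (the ``$X_i \ X_i$'' pattern along the superdiagonal), verify that all the previously-commuting squares remain commutative after the summand is adjoined, and confirm that the bottom-left-to-top-right composite changes by exactly $f_n h$ (resp.\ $k \Sigma f_1$) and nothing else. I expect this to be routine but diagram-heavy; the conceptual content is entirely that trivial $n$-angles let us graft on identity morphisms without leaving $\nang$, by \ref{N1a} and \ref{N1b}.
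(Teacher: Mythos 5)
There is a genuine gap, and it is not just bookkeeping: the localized modification you propose cannot produce the correction terms. The element of $\todafc{f_n,\dots,f_2,f_1}$ is by definition the composite $\Sigma(\beta_1^{n-1}\cdots\beta_1^1)$ of the \emph{first components} of the vertical maps, running from $\Sigma X_1$ at the top of the staircase down to $X_{n+1}$ at the bottom. If you keep rows $2,\dots,n-2$ and the maps between them unchanged and alter only the last $n$-angle and its adjacent verticals, the new composite has the form $F\circ\Sigma(\beta_1^{n-3}\cdots\beta_1^1)$ for some $F\colon\Sigma Z_1^{n-2}\to X_{n+1}$; for it to equal $\psi+f_nh$ you would need $f_nh$ to factor through $\Sigma(\beta_1^{n-3}\cdots\beta_1^1)$, which fails for arbitrary $h\colon\Sigma X_1\to X_n$ (e.g.\ when that composite is zero but $f_nh\neq0$). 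The source $\Sigma X_1$ of $h$ lives at the top-right corner, so its contribution must be threaded through \emph{every} intermediate row: this is exactly what the paper does in \autoref{fig:trivial}, adding the trivial summand $(TX_1)_\bullet[1]$ to rows $2,\dots,n-2$ (carrying a copy of $\Sigma X_1$ down the staircase via identities) and injecting $h$ only at the transition into row $n-1$ via $\left[\begin{smallmatrix}\beta_n^{n-2} & h\end{smallmatrix}\right]$ and $\left[\begin{smallmatrix}\Sigma\beta_1^{n-2} & z_n^{n-1}h\end{smallmatrix}\right]$. Dually, $k$ has target $X_{n+1}$ at the bottom-right corner, so the $k\Sigma f_1$ correction cannot be achieved by touching only the top row either; your claim that the two modifications affect disjoint rows is therefore false.

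There are also shape problems with the specific sums you take. Summing $(TX_n)_\bullet$ (suitably rotated) onto $Z_\bullet^{n-1}$ so that the identity sits in slots $n-1$ and $n$ changes the $(n-1)$-st morphism of that row from $f_{n-1}$ to $f_{n-1}\oplus 1$ and changes the entries $X_{n-1},X_n$ at the staircase positions, so the resulting diagram no longer has the shape required by \autoref{def:TodaNang} and \autoref{figure:fcbracket} (row $i$ must contain $f_i$ \emph{literally} as its $i$-th morphism, and the staircase verticals must be identities); note that the paper's choice $(TX_1)_\bullet[1]$ is made precisely so that its nonzero entries avoid the staircase slots, and precisely for this reason the very last row is \emph{not} modified. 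Finally, in the fiber--cofiber definition the top row is not an $n$-angle extension of $f_1$ at all (it is just $f_1$ together with $\Sigma X_1$ for bookkeeping), so ``$Z_\bullet^1\oplus(TX_2)_\bullet$'' does not parse within the definition. Your underlying instinct --- absorb the corrections using trivial $n$-angles, permitted by \ref{N1a} and \ref{N1b} --- is the right germ and is what the paper uses, but it must be applied along the whole staircase, not locally at one end.
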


\begin{proof}
  We only prove $\psi + f_n h\in \todafc{f_n,\dots,f_2,f_1}$; the proof for
  $\psi + k\Sigma f_1$ is dual.
    
  In the case $n=3$, the statement is a standard fact, whose proof only uses the
  axioms of a pretriangulated category; see for instance
  \cite[Lemma~4.6.2]{BarnesR20}. Now assume $n \geq 4$. Consider the diagram
  representing $\psi$ given by \autoref{figure:fcbracket} where
  \begin{equation*}
    \psi = \Sigma(\beta_1^{n-1}\beta_1^{n-2}\cdots\beta_{1}^2\beta_{1}^1).
  \end{equation*}
  From this we can now construct a new diagram, using the given morphism
  $h\colon \Sigma X_1\to X_n$: to each intermediate $n$-angle in the diagram,
  except the very last one, we add on the trivial $n$-angle $(TX_1)_\bullet[1]$
  and exchange most of the vertical morphisms with their obvious counterpart in
  \autoref{fig:trivial}.

  \begin{figure}[htbp]
    \centering
    \begin{tikzpicture}
      \begin{scope}[xshift=-1cm]
        \diagram{d}{3em}{2em}{
          X_1 &[2em] X_2\\
          Z_{1}^2 \oplus X_1 &[2em] X_2 & X_3 & \cdots\\
          && X_3 & \cdots\\
        };
        
        \path[->, font = \scriptsize, auto]
        (d-1-1) edge node{$f_1$} (d-1-2)
        edge[blue] node[blue]{$\left[
            \begin{smallmatrix}
              \beta_{1}^1\\
              1
            \end{smallmatrix}
          \right]$} (d-2-1)
        \downequal{d}{1-2}{2-2}
        
        (d-2-1) edge node{$\left[
            \begin{smallmatrix}
              z_1^2 & 0
            \end{smallmatrix}\right]$} (d-2-2)
        (d-2-2) edge node{$f_2$} (d-2-3)
        (d-2-3) edge node{$z_3^2$} (d-2-4)
        \downequal{d}{2-3}{3-3}
        
        (d-3-3) edge node{$f_3$} (d-3-4);
      \end{scope}
      
      \begin{scope}[xshift=2cm,yshift=-6.75cm]
        \diagram{d}{3em}{2em}{
          &&&[2em] &[2em] \Sigma X_1\\
          \cdots & Z_{n - 2}^2 & Z_{n-1}^2 &[2em] Z_{n}^2 \oplus \Sigma X_1 &[2em]
          \Sigma(Z_{1}^2 \oplus X_1)\\
          \cdots & Z_{n - 2}^3 & Z_{n - 1}^3 &[2em] Z_{n}^3 \oplus \Sigma X_1 &[2em]
          \Sigma(Z_{1}^3 \oplus  X_1)\\
          & \vdots & \vdots &[2em] \vdots &[2em] \vdots\\
          & X_{n - 2} & X_{n - 1} &[2em] Z_n^{n-2} \oplus \Sigma X_1 &[2em]
          \Sigma(Z_1^{n - 2} \oplus  X_1)\\
          && X_{n - 1} &[2em] X_n &[2em] Z_1^{n - 1}\\
          &&&[2em] X_n &[2em] X_{n + 1}\\ 
        };
        
        \path[->, font = \scriptsize, auto]
        (d-1-5) edge[blue] node[blue]{$\left[
            \begin{smallmatrix}
              \Sigma\beta_{1}^1\\
              1
            \end{smallmatrix}\right]$} (d-2-5)
        
        (d-2-1) edge node{$z_{n - 3}^2$} (d-2-2)
        (d-2-2) edge node{$z_{n - 2}^2$} (d-2-3)
        edge node{$\beta_{n - 2}^2$} (d-3-2)
        (d-2-3) edge node{$\left[
            \begin{smallmatrix}
              z_{n - 1}^2\\
              0
            \end{smallmatrix}\right]$} (d-2-4)
        edge node{$\beta_{n - 1}^2$} (d-3-3)
        (d-2-4) edge node{$\left[
            \begin{smallmatrix}
              z_{n}^2 & 0\\
              0 & 1
            \end{smallmatrix}\right]$} (d-2-5)
        edge node{$\left[
            \begin{smallmatrix}
              \beta_{n}^2 & 0\\
              0 & 1
            \end{smallmatrix}\right]$} (d-3-4)
        (d-2-5) edge node{$\left[
            \begin{smallmatrix}
              \Sigma \beta_{1}^2 & 0\\
              0 & 1
            \end{smallmatrix}\right]$} (d-3-5)
        
        (d-3-1) edge node{$z_{n - 3}^3$} (d-3-2)
        (d-3-2) edge node{$z_{n - 2}^3$} (d-3-3)
        edge node{$\beta_{n - 2}^3$} (d-4-2)
        (d-3-3) edge node{$\left[
            \begin{smallmatrix}
              z_{n - 1}^3\\
              0
            \end{smallmatrix}\right]$} (d-3-4)
        edge node{$\beta_{n - 1}^3$} (d-4-3)
        (d-3-4) edge node{$\left[
            \begin{smallmatrix}
              z_{n}^3 & 0\\
              0 & 1
            \end{smallmatrix}\right]$} (d-3-5)
        edge node{$\left[
            \begin{smallmatrix}
              \beta_{n}^3 & 0\\
              0 & 1
            \end{smallmatrix}\right]$} (d-4-4)
        (d-3-5) edge node{$\left[
            \begin{smallmatrix}
              \Sigma\beta_{1}^3 & 0\\
              0 & 1
            \end{smallmatrix}\right]$} (d-4-5)
        
        \downequal{d}{4-2}{5-2}
        (d-4-3) edge node{$\beta_{n-1}^{n - 3}$} (d-5-3)
        (d-4-4) edge node{$\left[
            \begin{smallmatrix}
              \beta_n^{n - 3} & 0\\
              0 & 1
            \end{smallmatrix}\right]$} (d-5-4)
        (d-4-5) edge node{$\left[
            \begin{smallmatrix}
              \Sigma\beta_1^{n - 3} & 0\\
              0 & 1
            \end{smallmatrix}\right]$} (d-5-5)
        
        (d-5-2) edge node{$f_{n - 2}$} (d-5-3)
        (d-5-3) edge node{$\left[
            \begin{smallmatrix}
              z_{n-1}^{n - 2}\\
              0
            \end{smallmatrix}\right]$} (d-5-4)
        \downequal{d}{5-3}{6-3}
        (d-5-4) edge node{$\left[
            \begin{smallmatrix}
              z_n^{n - 2} & 0\\
              0 & 1
            \end{smallmatrix}\right]$} (d-5-5)
        edge[blue] node[blue]{$\left[
            \begin{smallmatrix}
              \beta_n^{n - 2} & h
            \end{smallmatrix}\right]$} (d-6-4)
        (d-5-5) edge[blue] node[blue]{$\left[
            \begin{smallmatrix}
              \Sigma\beta_1^{n - 2} & z_n^{n - 1} h
            \end{smallmatrix}\right]$} (d-6-5)
        
        (d-6-3) edge node{$f_{n - 1}$} (d-6-4)
        (d-6-4) edge node{$z_n^{n - 1}$} (d-6-5)
        \downequal{d}{6-4}{7-4}
        (d-6-5) edge node{$\Sigma\beta_1^{n - 1}$} (d-7-5)
        
        (d-7-4) edge node{$f_n$} (d-7-5);
      \end{scope}
    \end{tikzpicture}
    \caption{Adding trivial summands.}
    \label{fig:trivial}
  \end{figure}
    
  In service of legibility we have left out the left lower corner of the
  staircase, since nothing of further note has been changed there. Emphasized in
  blue are the only ``non-trivial changes'' to the diagram. Commutativity of
  this diagram is easily verified and, so, extending it to the left, we see that
  we have produced a diagram that represents a new element of the fiber-cofiber
  Toda bracket $\todafc{f_n,\dots,f_2,f_1}$; let us denote it by $\overline
  \psi$. Let us calculate said morphism:
  \begin{align*}
    \overline\psi&= (\Sigma \beta_1^{n-1})
                   \left[\begin{matrix}
                       \Sigma\beta_1^{n-2} & z_n^{n-1}h
                     \end{matrix}\right]
                   \left[\begin{matrix}
                       \Sigma\beta_1^{n-3}&0\\
                       0&1
                     \end{matrix}\right]\cdots
                   \left[\begin{matrix}
                       \Sigma\beta_{1}^2 & 0
                       \\0&1
                     \end{matrix}\right]
                   \left[\begin{matrix}
                       \Sigma\beta_{1}^1\\
                       1\end{matrix}\right] \\
                 &=\left[\begin{matrix}
                     \Sigma(\beta_1^{n-1}\beta_1^{n-2}) & (\Sigma\beta_1^{n-1})z^{n-1}_{n}h
                   \end{matrix}\right]
                   \left[\begin{matrix}
                       \Sigma(\beta_1^{n-3}\cdots \beta_{1}^2)&0\\
                       0&1
                     \end{matrix}\right]
                   \left[\begin{matrix}
                       \Sigma \beta^{1}_1
                       \\1
                     \end{matrix}\right]\\
                 &=\Sigma(\beta_1^{n-1}\beta_1^{n-2}\beta_1^{n-3}\cdots\beta_{1}^2\beta_{1}^1)
                   +(\Sigma\beta_1^{n-1})z^{n-1}_nh\\
                 &=\psi+f_nh. \qedhere
  \end{align*}
\end{proof}

\begin{proposition}\label{prop:brackets_are_cosets}
  Let $f_1, f_2, \ldots, f_n$ be composable morphisms in $\C$ as in
  \autoref{eq:seq}. such that $f_{i+1}f_i=0$ for $1\leq i\leq n-1$. Let $G$ be
  the subgroup
  \begin{equation*}
    (f_n)_\ast \C(\Sigma X_1, X_n) + (\Sigma f_1)^\ast \C(\Sigma X_2, X_{n + 1})\subseteq
    \C(\Sigma X_1, X_{n + 1}).
  \end{equation*}
  Then the sets $\todacc{f_n,\dots,f_2,f_1}$, $\todaff{f_n,\dots,f_2,f_1}$ and
  $\todafc{f_n,\dots,f_2,f_1}$ are all cosets of $G$.
\end{proposition}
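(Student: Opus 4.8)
The plan is to apply \ref{lemma-4-1-3} to each of the three brackets, regarded as a non-empty subset of the abelian group $\C(\Sigma X_1, X_{n+1})$ together with its subgroup $G$. So for each bracket $S$ I need three things: that $S$ is non-empty, that $S + g \subseteq S$ for every $g \in G$ (the additive reading of \ref{lemma-4-1-1}), and that $s - s' \in G$ for all $s, s' \in S$ (the additive reading of \ref{lemma-4-1-2}). Non-emptiness I would dispatch immediately: it holds for $\todafc{f_n,\dots,f_1}$ by \autoref{lem:nonempty_fcbracket} and for the iterated (co)fiber brackets by \autoref{cor:ffcc_nonempty}.

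For closure under adding $G$, I would first note that every element of $G$ has the form $f_n h + k\Sigma f_1$ with $h \in \C(\Sigma X_1, X_n)$ and $k \in \C(\Sigma X_2, X_{n+1})$, so the fiber-cofiber case is exactly \autoref{lem:fc_is_union}. For $\todacc{f_n,\dots,f_1}$ I would argue by a short diagram chase: starting from a commutative diagram as in \autoref{eq:TodaCC} with top $n$-angle $Y_\bullet$ and verticals $1,1,\phi_3,\dots,\phi_n,\psi$, to realize $\psi + f_n h$ I replace $\phi_n$ by $\phi_n + h y_n$ and $\psi$ by $\psi + f_n h$ — the only squares affected are the last two, and they still commute because $y_n y_{n-1} = 0$ and $f_n\phi_n = \psi y_n$ — and to realize $\psi + k\Sigma f_1$ I keep the $\phi_i$ and just replace $\psi$ by $\psi + k\Sigma f_1$, which still gives a commutative diagram since $(\Sigma f_1)y_n = 0$ (because $y_n$ and $\pm\Sigma f_1$ are consecutive morphisms of the $n$-angle $Y_\bullet[1]$, cf.\ \ref{N2}). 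Composing these two moves handles an arbitrary $g \in G$. The iterated fiber case is dual.

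The third condition is the heart of the matter. For $\todacc{f_n,\dots,f_1}$ I would first invoke \autoref{lem:AnyExtension} to compute the bracket with one fixed $n$-angle extension $Y_\bullet$ of $f_1$, so that any two of its elements $\psi, \psi'$ come from diagrams as in \autoref{eq:TodaCC} with a common top row. Subtracting verticals then yields a commutative ladder from $Y_\bullet$ down to \autoref{eq:seq}, with zero in the first two columns, $\xi_i = \phi_i - \phi'_i$ in the middle columns, and $\psi - \psi'$ at the end. I would then use the fact — needing only \ref{N1}--\ref{N3} — that the represented functors $\C(-, A)$ carry $n$-angles to exact sequences: from $\xi_3 y_2 = f_2\cdot 0 = 0$ I get $\xi_3 = \rho_4 y_3$, and marching column by column, using $f_{i+1}f_i = 0$ at each step to cancel the cross terms, I produce morphisms $\rho_i \colon Y_i \to X_{i-1}$ with $\xi_i = f_{i-1}\rho_i + \rho_{i+1}y_i$, finishing with $\rho_{n+1}\colon \Sigma X_1 \to X_n$. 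The last square then reads $(\psi - \psi' - f_n\rho_{n+1})y_n = f_n f_{n-1}\rho_n = 0$, and exactness of $\C(-, X_{n+1})$ on $Y_\bullet[1]$ — whose morphism out of $\Sigma X_1$ is $\pm\Sigma f_1$ — gives $\psi - \psi' - f_n\rho_{n+1} \in (\Sigma f_1)^*\C(\Sigma X_2, X_{n+1})$, i.e.\ $\psi - \psi' \in G$. The iterated fiber case is dual, using that $\C(A, -)$ is cohomological on $n$-angles. For the fiber-cofiber bracket the third condition is then free: by \autoref{lem:fc_in_cc} any two elements of $\todafc{f_n,\dots,f_1}$ lie in $\todacc{f_n,\dots,f_1}$, so their difference lies in $G$. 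With all three conditions in hand for each bracket, \ref{lemma-4-1-3} completes the proof.

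I expect the third condition for $\todacc{f_n,\dots,f_1}$ (and dually $\todaff{f_n,\dots,f_1}$) to be the main obstacle: one must set up the inductive factorization cleanly and make sure the cohomological behaviour of represented functors is genuinely available in the merely pre-$n$-angulated setting. Closure of $\todacc{f_n,\dots,f_1}$ under $G$ is a routine chase, and everything about $\todafc{f_n,\dots,f_1}$ reduces to lemmas already established.
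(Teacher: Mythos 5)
Your proposal is correct and follows essentially the same route as the paper: non-emptiness from \autoref{lem:nonempty_fcbracket} and \autoref{cor:ffcc_nonempty}, closure under adding $G$ via \autoref{lem:fc_is_union} and the direct diagram modification for the iterated cofiber bracket, the difference condition via the same inductive weak-cokernel factorization (valid from \ref{N1}--\ref{N3} alone), and the fiber-cofiber case settled through its inclusion in the iterated cofiber bracket. The only cosmetic difference is that you check all three coset conditions for $\todafc{f_n,\dots,f_2,f_1}$ and invoke \ref{lemma-4-1-3}, whereas the paper notes it is a union of cosets contained in a single coset; these are equivalent.
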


\begin{proof}
  \textbf{Iterated (co)fiber.} Since the iterated fiber and iterated cofiber
  Toda brackets are dual, we prove the statement for the latter. First, note
  that by \autoref{cor:ffcc_nonempty}, the iterated cofiber Toda bracket
  $\todacc{f_n,\dots,f_2,f_1}$ is non-empty. Let $\psi$ and $\tilde{\psi}$ be
  two elements of $\todacc{f_n,\dots,f_2,f_1}$. By \autoref{lem:AnyExtension} we
  may suppose $\psi$ and $\tild\psi$ are computed with the same $n$-angle
  extension of $f_1$, so let us denote this extension by $Y_\bullet$ exhibited
  as the top row in the commutative diagram
  \begin{center}
    \begin{tikzpicture}
      \diagram{d}{2em}{2em}{
        X_1 & X_2 & Y_3 & Y_4 & \cdots & Y_n & \Sigma X_{1}\\
        X_1 & X_2 & X_3 & X_4 & \cdots & X_n & X_{n+1}.\\
      };
      
      \path[->, auto, font = \scriptsize]
      (d-1-1) edge node{$f_1$} (d-1-2)
      (d-1-2) edge node{$y_2$} (d-1-3)
      (d-1-3) edge node{$y_3$} (d-1-4)
      (d-1-4) edge node{$y_4$} (d-1-5)
      (d-1-5) edge node{$y_{n-1}$} (d-1-6)
      (d-1-6) edge node{$y_n$} (d-1-7)
      
      (d-2-1) edge node{$f_1$} (d-2-2)
      (d-2-2) edge node{$f_2$} (d-2-3)
      (d-2-3) edge node{$f_3$} (d-2-4)
      (d-2-4) edge node{$f_4$} (d-2-5)
      (d-2-5) edge node{$f_{n-1}$} (d-2-6)
      (d-2-6) edge node{$f_n$} (d-2-7)
      
      \downequal{d}{1-1}{2-1}
      \downequal{d}{1-2}{2-2}
      ([xshift=-.2em] d-1-3.south) edge node[swap]{$\varphi_3$} ([xshift=-.2em] d-2-3.north)
      ([xshift=.2em] d-1-3.south) edge node{$\tild\varphi_3$} ([xshift=.2em] d-2-3.north)
      ([xshift=-.2em] d-1-4.south) edge node[swap]{$\varphi_4$} ([xshift=-.2em] d-2-4.north)
      ([xshift=.2em] d-1-4.south) edge node{$\tild\varphi_4$} ([xshift=.2em] d-2-4.north)
      ([xshift=-.2em] d-1-6.south) edge node[swap]{$\varphi_{n}$} ([xshift=-.2em] d-2-6.north)
      ([xshift=.2em] d-1-6.south) edge node{$\tild\varphi_{n}$} ([xshift=.2em] d-2-6.north)
      ([xshift=-.2em] d-1-7.south) edge node[swap]{$\psi$} ([xshift=-.2em] d-2-7.north)
      ([xshift=.2em] d-1-7.south) edge node{$\tild\psi$} ([xshift=.2em] d-2-7.north);
    \end{tikzpicture}
  \end{center}
  Then we see that
  \begin{equation*}
    (\phi_3-\tild\phi_3)y_2=0
  \end{equation*}
  so there is a morphism $h_3\colon Y_4\to X_3$ such that
  $h_3y_3=\varphi_3-\tild\varphi_3$. Therefore
  \begin{equation*}
    (\varphi_4-\tild\varphi_4)y_3=f_3(\varphi_3-\tild\varphi_3)=f_3h_3y_3
  \end{equation*}
  and so
  \begin{equation*}
    (\varphi_4-\tild\varphi_4-f_3h_3)y_3=0
  \end{equation*}
  which means that there is a morphism $h_4\colon Y_5\to X_4$ such that
  $h_4y_4=\varphi_4-\tild\varphi_4-f_3h_3$. Continuing this process, we see that
  \begin{equation*}
    (\varphi_5-\tild\varphi_5)y_4=f_4(\varphi_4-\tild\varphi_4)=f_4(\varphi_4 -
    \tild\varphi_4-f_3h_3)=f_4h_4y_4
  \end{equation*}
  since $f_4f_3=0$, so, again,
  \begin{equation*}
    (\varphi_5-\tild\varphi_5-f_4h_4)y_4=0.
  \end{equation*}
  Suppose now that for some $k\leq n$ we have found morphisms $h_i\colon
  Y_{i+1}\to X_i$ for all $4\leq i\leq k$ such that
  \begin{equation*}
    h_iy_i=\varphi_{i}-\tild\varphi_{i}-f_{i-1}h_{i-1}.
  \end{equation*}
  Then
  \begin{equation*}
    (\varphi_{k+1}-\tild\varphi_{k+1})y_k=f_k(\varphi_{k}-\tild\varphi_{k})=f_k(\varphi_{k}
    -\tild\varphi_{k}-f_{k-1}h_{k-1})=f_kh_ky_k
  \end{equation*}
  so
  \begin{equation*}
    (\varphi_{k+1}-\tild\varphi_{k+1}-f_kh_k)y_k=0
  \end{equation*}
  and therefore there is a morphism $h_{k+1}\colon Y_{k+2}\to X_{k+1}$ such that
  \begin{equation*}
    h_{k+1}y_{k+1}=\varphi_{k+1}-\tild\varphi_{k+1}-f_kh_k.
  \end{equation*}
  In the case $k=n$, this argument yields a morphism $h_{n+1}\colon \Sigma
  X_2\to Y_{n+1}$ satisfying the equation
  \begin{equation*}
    h_{n+1}\Sigma f_1=\psi-\tild\psi-f_nh_n
  \end{equation*}
  where $h_n\in\C(\Sigma X_1,X_n)$ and so
  \begin{equation*}
    \psi-\tild\psi=f_nh_n+h_{n+1}\Sigma f_1\in (f_n)_*\C(\Sigma X_1,X_{n})+(\Sigma
    f_1)^*\C(\Sigma X_2,X_{n+1}).
  \end{equation*}
  This proves that the set $\todacc{f_n,\dots,f_2,f_1}$ satisfies the condition
  of \ref{lemma-4-1-2}. To prove that it also satisfies the condition of
  \ref{lemma-4-1-1} we now take the $\psi$ from earlier and consider two
  arbitrary morphisms
  \begin{equation*}
    h_n\colon \Sigma X_1\to X_n \quad\text{and}\quad h_{n+1}\colon \Sigma X_2\to X_{n+1}.
  \end{equation*}
  We claim that the diagram
  \begin{center}
    \begin{tikzpicture}
      \diagram{d}{2.5em}{2em}{
        X_1 & X_2 & Y_3 & Y_4 & \cdots &Y_{n-1}& Y_n & \Sigma X_{1}\\
        X_1 & X_2 & X_3 & X_4 & \cdots &X_{n-1}& X_n & X_{n+1}.\\
      };
      
      \path[->, auto, font = \scriptsize]
      (d-1-1) edge node{$f_1$} (d-1-2)
      (d-1-2) edge node{$y_2$} (d-1-3)
      (d-1-3) edge node{$y_3$} (d-1-4)
      (d-1-4) edge node{$y_4$} (d-1-5)
      (d-1-5) edge node{$y_{n-2}$} (d-1-6)
      (d-1-6) edge node{$y_{n-1}$} (d-1-7)
      (d-1-7) edge node{$y_n$} (d-1-8)
      
      (d-2-1) edge node{$f_1$} (d-2-2)
      (d-2-2) edge node{$f_2$} (d-2-3)
      (d-2-3) edge node{$f_3$} (d-2-4)
      (d-2-4) edge node{$f_4$} (d-2-5)
      (d-2-5) edge node{$f_{n-2}$} (d-2-6)
      (d-2-6) edge node{$f_{n-1}$} (d-2-7)
      (d-2-7) edge node{$f_n$} (d-2-8)
      
      \downequal{d}{1-1}{2-1}
      \downequal{d}{1-2}{2-2}
      (d-1-3) edge node{$\varphi_3$} (d-2-3)
      (d-1-4) edge node{$\varphi_4$} (d-2-4)
      (d-1-6) edge node{$\varphi_{n-1}$} (d-2-6)
      (d-1-7) edge node[xshift=-20pt,inner sep=1pt,fill=white]{$\varphi_{n}+h_ny_n$}
      (d-2-7)
      (d-1-8) edge node[xshift=-20pt,inner sep=1pt,fill=white]{$\psi+f_nh_n+h_{n+1}
        \Sigma f_1$} (d-2-8);
    \end{tikzpicture}
  \end{center}
  commutes and therefore $\psi+f_nh_n+h_{n+1}\Sigma
  f_1\in\todacc{f_n,\dots,f_2,f_1}$. To see that the diagram commutes, simply
  note
  \begin{equation*}
    (\varphi_{n}+h_ny_n)y_{n-1}=\varphi_{n}y_{n-1}=f_{n-1}\varphi_{n-1}
  \end{equation*}
  and
  \begin{equation*}
    (\psi+f_nh_n+h_{n+1}\Sigma f_1)y_n=(\psi+f_nh_n)y_n=f_n(\varphi_{n}+h_ny_n).
  \end{equation*}
  By \ref{lemma-4-1-3}, this proves the statement for the iterated cofiber Toda
  bracket.

  \textbf{Fiber-cofiber.} Note first that the fiber-cofiber Toda bracket
  $\todafc{f_n,\dots,f_2,f_1}$ is non-empty by \autoref{lem:nonempty_fcbracket}.
  By \autoref{lem:fc_is_union} we know that for any
  $\psi\in\todafc{f_n,\dots,f_2,f_1}$ and any two morphisms, $h\colon \Sigma
  X_1\to X_n$ and $k\colon \Sigma X_2\to X_{n+1}$ we have
  \begin{equation*}
    \psi + f_nh + k\Sigma f_1\in\todafc{f_n,\dots,f_2,f_1}.
  \end{equation*}
  Thus we conclude that $\todafc{f_n,\dots,f_2,f_1}$ is a union of cosets of $G$
  by \ref{lemma-4-1-1}. However, as we know by \autoref{lem:fc_in_cc} that
  \begin{equation*}
    \todafc{f_n,\dots,f_2,f_1}\subseteq\todacc{f_n,\dots,f_2,f_1}
  \end{equation*}
  and that $\todacc{f_n,\dots,f_2,f_1}$ is a coset of $G$, we conclude that
  $\todafc{f_n,\dots,f_2,f_1}$ is the same coset.
\end{proof}

This motivates recycling the terminology of triangulated categories; namely, we
will call the subgroup
\begin{equation*}
  (f_n)_*\C(\Sigma X_1,X_{n})+(\Sigma f_1)^*\C(\Sigma X_2,X_{n+1})
\end{equation*}
the \Def{indeterminacy}. The main theorem now follows as a simple corollary.

\begin{theorem}\label{thm:main}
  Let $\C$ be a pre-$n$-angulated category and consider a diagram of the shape
  \begin{center}
    \begin{tikzpicture}
      \diagram{d}{2em}{2em}{
        X_1 & X_2 & X_3 & \cdots & X_n & X_{n+1}\\
      };
      
      \path[->, auto, font = \scriptsize]
      (d-1-1) edge node{$f_1$} (d-1-2)
      (d-1-2) edge node{$f_2$} (d-1-3)
      (d-1-3) edge node{$f_3$} (d-1-4)
      (d-1-4) edge node{$f_{n-1}$} (d-1-5)
      (d-1-5) edge node{$f_n$} (d-1-6);
    \end{tikzpicture}
  \end{center}
  such that $f_{i+1}f_i=0$ for $1\leq i\leq n-1$. Then
  \begin{equation*}
    \todaff{f_n,\dots,f_2,f_1}=\todafc{f_n,\dots,f_2,f_1}=\todacc{f_n,\dots,f_2,f_1}.
  \end{equation*}
\end{theorem}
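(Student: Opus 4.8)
The plan is to deduce this immediately from the two preceding results, so the proof is essentially a formal consequence. First I would invoke \autoref{prop:brackets_are_cosets}: since $f_{i+1}f_i=0$ for $1\leq i\leq n-1$, each of the three sets $\todacc{f_n,\dots,f_2,f_1}$, $\todaff{f_n,\dots,f_2,f_1}$ and $\todafc{f_n,\dots,f_2,f_1}$ is a coset of one and the same subgroup
\begin{equation*}
  G = (f_n)_\ast \C(\Sigma X_1, X_n) + (\Sigma f_1)^\ast \C(\Sigma X_2, X_{n+1}) \subseteq \C(\Sigma X_1, X_{n+1}).
\end{equation*}
In particular, all three are non-empty.

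Next I would recall the elementary group-theoretic fact that two cosets of a fixed subgroup are either equal or disjoint. By \autoref{lem:fc_in_cc} we have
\begin{equation*}
  \todafc{f_n,\dots,f_2,f_1} \subseteq \todacc{f_n,\dots,f_2,f_1} \cap \todaff{f_n,\dots,f_2,f_1},
\end{equation*}
and since $\todafc{f_n,\dots,f_2,f_1}$ is non-empty it meets each of $\todacc{f_n,\dots,f_2,f_1}$ and $\todaff{f_n,\dots,f_2,f_1}$; being cosets of the same subgroup $G$ that are not disjoint, each of the latter two must therefore coincide with $\todafc{f_n,\dots,f_2,f_1}$. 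This yields $\todaff{f_n,\dots,f_2,f_1}=\todafc{f_n,\dots,f_2,f_1}=\todacc{f_n,\dots,f_2,f_1}$, as claimed.

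I do not expect a genuine obstacle here: all the substantive content — that each of the three brackets is a \emph{single} coset of $G$, and that the fiber-cofiber bracket is contained in the other two — has already been established in \autoref{lem:fc_in_cc} and \autoref{prop:brackets_are_cosets}. The only point worth a moment's care is that the ambient abelian group $\C(\Sigma X_1,X_{n+1})$ and the subgroup $G$ are literally the same for all three brackets, which is exactly how $G$ was set up in \autoref{prop:brackets_are_cosets}, so the ``equal or disjoint'' dichotomy applies verbatim.
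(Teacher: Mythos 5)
Your argument is correct and is essentially the paper's own proof: both invoke \autoref{prop:brackets_are_cosets} to see that all three brackets are cosets of the same indeterminacy subgroup and then use the containment from \autoref{lem:fc_in_cc} to conclude equality. You merely make explicit the ``cosets are equal or disjoint'' step that the paper leaves implicit.
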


\begin{proof}
  By \autoref{prop:brackets_are_cosets}, we know that all three Toda brackets
  are cosets. Since the fiber-cofiber Toda bracket $\todafc{f_n,\dots,f_2,f_1}$
  is contained in the remaining two, we may conclude that all three are equal.
\end{proof}

\begin{remark}
  In the case $n=3$, the proofs provided in \cite[Section~4.6]{Meier12} or
  \cite[Proposition~3.3]{CF} use the octahedral axiom, whereas the proof
  presented in this section holds in any pretriangulated category. For $n=4$, we
  know a direct proof of the inclusion $\todacc{f_4,f_3,f_2,f_1} \subseteq
  \todafc{f_4,f_3,f_2,f_1}$ relying on the higher octahedral axiom~(N4)
  mentioned in \autoref{rem:nang} in the spirit of \cite[Proposition~3.3]{CF}.
  However, using the indeterminacy yields a simpler proof.
\end{remark}

We may now talk about \emph{the} Toda bracket, denoted by
$\toda{f_n,\dots,f_2,f_1}$, whenever we are presented a diagram of $n$
consecutive morphisms $f_i$ where the $2$-fold composites are zero, that is,
$f_{i+1} f_i = 0$ for $1 \leq i \leq n-1$. We think of the fiber-cofiber Toda
bracket as \emph{the} Toda bracket, morally speaking. However, we may still use
the notation of the fiber-cofiber, iterated fiber or iterated cofiber Toda
brackets whenever we wish to specify which kind of diagram we want to represent
an element of the bracket by.

\begin{example}\label{eq:ff_cc_not_equal}
  It turns out that we really do require that the morphisms compose to zero for
  \autoref{thm:main} to hold. In the triangulated case, this is not so: if two
  of the three morphisms in this scenario do not compose to zero, all three Toda
  brackets will be empty. In the $n$-angulated case for $n> 3$ if two morphisms
  do not compose to zero, the iterated (co)fiber Toda bracket may be non-empty,
  while the fiber-cofiber Toda bracket is empty.

  To illustrate this point, consider the following simple diagram in a
  $4$-angulated category:
  \begin{center}
    \begin{tikzpicture}
      \diagram{d}{2em}{2em}{
        X & X & \Sigma X & \Sigma X & \Sigma X\\
      };
      
      \path[->, auto, font = \scriptsize]
      (d-1-1) edge node{$1$} (d-1-2)
      (d-1-2) edge node{$0$} (d-1-3)
      (d-1-3) edge node{$1$} (d-1-4)
      (d-1-4) edge node{$1$} (d-1-5);
    \end{tikzpicture}
  \end{center}
  for some object $X$. Then, from the diagram
  \begin{center}
    \begin{tikzpicture}
      \diagram{d}{2em}{2em}{
        X & X & 0 & 0 & \Sigma X\\
        X & X & \Sigma X & \Sigma X & \Sigma X,\\
      };
      
      \path[->, auto, font = \scriptsize]
      (d-1-1) edge node{$1$} (d-1-2)
      (d-1-2) edge (d-1-3)
      (d-1-3) edge (d-1-4)
      (d-1-4) edge (d-1-5)
      
      (d-2-1) edge node{$1$} (d-2-2)
      (d-2-2) edge node{$0$} (d-2-3)
      (d-2-3) edge node{$1$} (d-2-4)
      (d-2-4) edge node{$1$} (d-2-5)
      
      \downequal{d}{1-1}{2-1}
      \downequal{d}{1-2}{2-2}
      (d-1-3) edge (d-2-3)
      (d-1-4) edge (d-2-4)
      (d-1-5) edge node{$\psi$} (d-2-5);
    \end{tikzpicture}
  \end{center}
  we may immediately discern that
  \begin{equation*}
    \todacc{1,1,0,1}=\C(\Sigma X,\Sigma X)\neq\emptyset
  \end{equation*}
  since there are no restrictions imposed on $\psi$. Note also that the iterated
  cofiber and iterated fiber Toda brackets do not agree, since
  \begin{equation*}
    \todaff{1,1,0,1}=\emptyset.
  \end{equation*}
  This example easily generalizes to all $n$-angulated categories for $n\geq 4$;
  in general we see that
  \begin{equation*}
    \todacc{1,1,1,\dots,1,0,1}=\C(\Sigma X,\Sigma X)\neq\emptyset.
  \end{equation*}
  For $n\geq 5$ it is also possible to construct examples where both the
  iterated fiber and iterated cofiber Toda brackets are both non-empty
  simultaneously.
\end{example}

%
%
\section{Intermediate Toda brackets}
\label{sec:intermediate}

Let $\C$ be a pre-$n$-angulated category and let
\begin{center}
  \begin{tikzpicture}
    \diagram{d}{2em}{2em}{
      X_1 & X_2 & X_3 & \cdots & X_n & X_{n + 1}\\
    };
    
    \path[->, font = \scriptsize, auto]
    (d-1-1) edge node{$f_1$} (d-1-2)
    (d-1-2) edge node{$f_2$} (d-1-3)
    (d-1-3) edge node{$f_3$} (d-1-4)
    (d-1-4) edge node{$f_{n - 1}$} (d-1-5)
    (d-1-5) edge node{$f_n$} (d-1-6);
  \end{tikzpicture}
\end{center}
be a diagram in $\C$. We define new subsets of $\C(\Sigma X_1, X_{n + 1})$ as
follows:

For each $1\leq i\leq n$ we have the \Def{$[i]$-intermediate Toda bracket}
$\todai{f_n,\dots,f_2,f_1}{i}\subseteq\C(\Sigma X_1,X_{n+1})$ consisting of
morphisms $\psi=\Sigma(\beta_{1}\alpha_1)\colon \Sigma X_1\to X_{n+1}$ such that
$\alpha_1$ and $\beta_{1}$ occur in a commutative diagram
\begin{center}
  \begin{tikzpicture}
    \begin{scope}
      \smalldiagram{d}{2em}{2.5em}{
        X_1 & X_2 & \cdots\\
        Z_{1} & Z_{2} & \cdots\\
      };
      
      \path[->, font = \scriptsize, auto]
      (d-1-1) edge node{$f_1$} (d-1-2)
              edge node{$\alpha_1$} (d-2-1)
      (d-1-2) edge node{$f_2$} (d-1-3)
              edge node{$\alpha_2$} (d-2-2)
      
      (d-2-1) edge node{$z_{1}$} (d-2-2)
      (d-2-2) edge node{$z_{2}$} (d-2-3);
    \end{scope}
    
    \begin{scope}[xshift = 3.5cm, yshift = -3cm]
      \smalldiagram{d}{2em}{2.5em}{
        \cdots & X_{i - 1} & X_i\\
        \cdots & Z_{i-1} & X_i & X_{i + 1} & Z_{i+2} & \cdots\\
        &&& X_{i + 1} & X_{i + 2} & \cdots\\
      };
      
      \path[->, font = \scriptsize, auto]
      (d-1-1) edge node{$f_{i - 2}$} (d-1-2)
      (d-1-2) edge node{$f_{i - 1}$} (d-1-3)
              edge node{$\alpha_{i - 1}$} (d-2-2)
      \downequal{d}{1-3}{2-3}
      
      (d-2-1) edge node{$z_{i - 2}$} (d-2-2)
      (d-2-2) edge node{$z_{i - 1}$} (d-2-3)
      (d-2-3) edge node{$f_i$} (d-2-4)
      (d-2-4) edge node{$z_{i+1}$} (d-2-5)
      \downequal{d}{2-4}{3-4}
      (d-2-5) edge node{$z_{i+2}$} (d-2-6)
              edge node{$\beta_{i+2}$} (d-3-5)
      
      (d-3-4) edge node{$f_{i + 1}$} (d-3-5)
      (d-3-5) edge node{$f_{i + 2}$} (d-3-6);
    \end{scope}
    
    \begin{scope}[xshift = 7cm, yshift = -6cm]
      \smalldiagram{d}{2em}{2.5em}{
        \cdots & Z_{n} & \Sigma Z_{1}\\
        \cdots & X_n & X_{n + 1}\\
      };
      
      \path[->, font = \scriptsize, auto]
      (d-1-1) edge node{$z_{n - 1}$} (d-1-2)
      (d-1-2) edge node{$z_{n}$} (d-1-3)
              edge node{$\beta_{n}$} (d-2-2)
      (d-1-3) edge node{$\Sigma\beta_{1}$} (d-2-3)
      
      (d-2-1) edge node{$f_{n - 1}$} (d-2-2)
      (d-2-2) edge node{$f_n$} (d-2-3);
    \end{scope}
  \end{tikzpicture}
\end{center}
where the middle row is some $n$-angle $Z_\bullet$ in which $f_i$ occurs as the
$i^\text{th}$ morphism.

These intermediate Toda brackets do, indeed, produce the same set as the ones we
have seen so far which we will prove in this section.

\begin{proposition}\label{prop:i_subsets}
  Let $f_1, f_2, \ldots, f_n$ be composable morphisms in $\C$ as in
  \autoref{eq:seq}. Then for each $1\leq i\leq n$ we have the following set
  inclusions:
  \begin{enumerate}
  \item $\todafc{f_n,\dots,f_2,f_1}\subseteq \todai{f_n,\dots,f_2,f_1}{i}$.
  \item If $f_{j+1}f_j=0$ for $1\leq j \leq i-3$ then 
    \begin{equation*}
      \todai{f_n,\dots,f_2,f_1}{i}\subseteq \todacc{f_n,\dots,f_2,f_1}.
    \end{equation*}
    If $i<4$ the inclusion holds without any assumption on composition of
    consecutive morphisms.
  \item If $f_{j+1}f_j=0$ for $i+2\leq j \leq n-1$ then 
    \begin{equation*}
      \todai{f_n,\dots,f_2,f_1}{i}\subseteq \todaff{f_n,\dots,f_2,f_1}.
    \end{equation*}
    If $i>n-3$ the inclusion holds without any assumption on composition of
    consecutive morphisms.
  \end{enumerate}
\end{proposition}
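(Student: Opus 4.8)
The plan is to obtain part~(1) by ``collapsing'' the large diagram of \autoref{figure:fcbracket}, and parts~(2) and~(3) by producing an iterated (co)fiber diagram via an iterated fill-in, deducing part~(3) from part~(2) in the opposite pre-$n$-angulated category; throughout I use the exactness of $\C(-,W)$ on an $n$-angle, as already exploited in the proofs of \autoref{lem:nonempty_fcbracket} and \autoref{prop:brackets_are_cosets}. For part~(1), the extreme cases $i\in\{1,n\}$ are \autoref{lem:fc_in_cc}, since $\todai{f_n,\dots,f_2,f_1}{1}=\todacc{f_n,\dots,f_2,f_1}$ and $\todai{f_n,\dots,f_2,f_1}{n}=\todaff{f_n,\dots,f_2,f_1}$. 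For $1<i<n$, represent $\psi\in\todafc{f_n,\dots,f_2,f_1}$ by a diagram of the shape of \autoref{figure:fcbracket} with rows the $n$-angles $Z_\bullet^2,\dots,Z_\bullet^{n-1}$, take $Z_\bullet\coloneq Z_\bullet^i$ as the $n$-angle in the $[i]$-intermediate diagram, and define $\alpha_j$ (for $1\le j\le i-1$) to be the composite of the vertical morphisms running down the $j$-th column of \autoref{figure:fcbracket} from the top to row~$i$, and $\beta_k$ (for $i+2\le k\le n$), and $\beta_1$, to be the composite running down the $k$-th column from row~$i$ to the bottom. The staircase identities give $X_i=Z_i^i$ and $X_{i+1}=Z_{i+1}^i$; a square-by-square telescoping using commutativity of \autoref{figure:fcbracket} shows the resulting $[i]$-intermediate diagram commutes; and the first-column composites give $\Sigma(\beta_1\alpha_1)=\Sigma(\beta_1^{n-1}\cdots\beta_1^1)=\psi$.

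For part~(2) the case $i=1$ is trivial as above, so assume $i\ge 2$ and write $\psi=\Sigma(\beta_1\alpha_1)\in\todai{f_n,\dots,f_2,f_1}{i}$, witnessed by an $n$-angle $Z_\bullet$ with $z_i=f_i$ and by morphisms $\alpha_1,\dots,\alpha_{i-1}$, $\beta_{i+2},\dots,\beta_n$; set $\alpha_i=\mathrm{id}_{X_i}$, $\beta_{i+1}=\mathrm{id}_{X_{i+1}}$, $g_2=0$. Fix an $n$-angle extension $Y_\bullet$ of $f_1$ and apply \ref{N3} to the square $z_1\alpha_1=\alpha_2 f_1$ to get a morphism of $n$-angles $a\colon Y_\bullet\to Z_\bullet$ with $a_1=\alpha_1$, $a_2=\alpha_2$. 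Using $f_{j+1}f_j=0$ for $1\le j\le i-3$, lift $f_2$ through $y_2$ and then each $f_{k-1}\phi_{k-1}$ through $y_{k-1}$ to obtain $\phi_3,\dots,\phi_{i-1}$ (with $\phi_k\colon Y_k\to X_k$) satisfying $\phi_3 y_2=f_2$ and $\phi_k y_{k-1}=f_{k-1}\phi_{k-1}$; one checks inductively, using $z_{k-1}\alpha_{k-1}=\alpha_k f_{k-1}$ and $z_{k-1}z_{k-2}=0$, that $a_k=\alpha_k\phi_k+z_{k-1}g_{k-1}+g_k y_k$ for suitable $g_k\colon Y_{k+1}\to Z_k$.

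Now put $\phi_i\coloneq a_i-z_{i-1}g_{i-1}$, $\phi_{i+1}\coloneq a_{i+1}$ (these land in $X_i$ and $X_{i+1}$ since $Z_i=X_i$, $Z_{i+1}=X_{i+1}$), and $\phi_k\coloneq\beta_k a_k$ for $i+2\le k\le n$. Using $z_i=f_i$, $z_i z_{i-1}=0$, the relations $f_k\beta_k=\beta_{k+1}z_k$ from the $[i]$-intermediate diagram, and that $a$ is a morphism of $n$-angles, one verifies that $\phi_3,\dots,\phi_n$ and $\psi=(\Sigma\beta_1)(\Sigma a_1)=\Sigma(\beta_1\alpha_1)$ fill in an iterated cofiber diagram over $Y_\bullet$, whence $\psi\in\todacc{f_n,\dots,f_2,f_1}$. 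When $i<4$ one has $\alpha_3=\mathrm{id}$ or $\alpha_2=\mathrm{id}$, so already $a_3 y_2=z_2 a_2=f_2$ and the lifting step---hence any vanishing hypothesis---is superfluous.

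Part~(3) follows by applying part~(2) in the opposite pre-$n$-angulated category, which interchanges $\todacc{-}$ with $\todaff{-}$ and sends $\todai{-}{i}$ to $\todai{-}{n+1-i}$, so that ``$f_{j+1}f_j=0$ for $i+2\le j\le n-1$'' is exactly the hypothesis of part~(2) for the index $n+1-i$ and ``$i>n-3$'' matches ``$n+1-i<4$''. I expect the bookkeeping in part~(2) to be the main obstacle: one must verify that the accumulated correction $z_{i-1}g_{i-1}$ is annihilated after exactly one more step (via $z_i z_{i-1}=0$), so that $\phi_{i+1}$ and all later $\phi_k$ need no correction and the iterated cofiber diagram produced represents the \emph{prescribed} morphism $\Sigma(\beta_1\alpha_1)$ rather than merely some element of the bracket.
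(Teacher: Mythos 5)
Your proposal is correct, and for parts (1) and (3) it matches the paper: part (1) is the paper's ``compose the verticals above and below row $i$'' argument (your handling of the edge cases $i=1,n$ via $\todai{-}{1}=\todacc{-}$, $\todai{-}{n}=\todaff{-}$ and \autoref{lem:fc_in_cc} is fine), and part (3) is exactly what the paper means by ``dual,'' with your index bookkeeping $[i]\leftrightarrow[n+1-i]$ and the matching of hypotheses checking out. Part (2) is where you genuinely diverge. The paper constructs lifts $\varphi_3,\dots,\varphi_i$ (the last two being free because the existence of $\psi$ forces $f_{i-1}f_{i-2}=0$ and $f_if_{i-1}=0$), assembles the partial morphism $(\alpha_1,\alpha_2,\alpha_3\varphi_3,\dots,\alpha_{i-1}\varphi_{i-1},\varphi_i)\colon Y_\bullet\to Z_\bullet$, and completes it to a morphism of $n$-angles by citing \cite[Lemma~2.1]{Klapproth}. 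You instead complete only at the first square via \ref{N3}, getting a full morphism $a\colon Y_\bullet\to Z_\bullet$ with $a_1=\alpha_1$, $a_2=\alpha_2$, lift only $\phi_3,\dots,\phi_{i-1}$, and reconcile $a_k$ with $\alpha_k\phi_k$ through the inductive correction $a_k=\alpha_k\phi_k+z_{k-1}g_{k-1}+g_ky_k$; the delicate point you flag does check out: $(a_k-\alpha_k\phi_k-z_{k-1}g_{k-1})y_{k-1}=z_{k-1}(z_{k-2}g_{k-2})=0$ drives the induction, $\phi_i\coloneq a_i-z_{i-1}g_{i-1}$ satisfies $\phi_iy_{i-1}=f_{i-1}\phi_{i-1}$ via $z_{i-1}\alpha_{i-1}=f_{i-1}$, and the leftover correction dies at the next step because $f_iz_{i-1}=z_iz_{i-1}=0$, after which $\phi_{i+1}\coloneq a_{i+1}$ and $\phi_k\coloneq\beta_ka_k$ make the iterated cofiber diagram commute with last vertical exactly $\Sigma(\beta_1\alpha_1)$; the $i<4$ cases also work as you say. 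What each route buys: the paper's is shorter modulo the cited completion lemma for partial morphisms; yours stays entirely within the tools already used in \autoref{prop:brackets_are_cosets} (namely \ref{N3} plus the weak cokernel/Yoneda exactness of $n$-angles) and needs one fewer lift. One cosmetic remark on (1): the composites should start (resp.\ end) where $X_j$ (resp.\ $X_k$) actually sits on the staircase, i.e.\ $\alpha_j=\beta^{i-1}_j\cdots\beta^j_j$ and $\beta_k=\beta^{k-2}_k\cdots\beta^i_k$, rather than literally at the top or bottom of the column; this is clearly your intent and is no looser than the paper's own phrasing.
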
  

\begin{proof}
  Let $\psi \in \todafc{f_n,\dots,f_2,f_1}$ be represented by a diagram as in
  \autoref{figure:fcbracket} such that
  \begin{equation*}
    \psi = \Sigma(\beta_1^{n - 1} \cdots \beta_1^2 \beta_1^1).
  \end{equation*} 
  We compose all the vertical morphisms before row $i$ and all the vertical
  morphisms after row $i$ and discard the rest in order to obtain the diagram in
  question. Thus we get a factorization of $\psi$ as an element of
  $\todai{f_n,\dots,f_2,f_1}{i}$ for any $i$, proving \textbf{(1)}.

  We now prove \textbf{(2)}; \textbf{(3)} is dual. First, note that if
  $\todai{f_n,\dots,f_2,f_1}{i}=\emptyset,$ the statement is trivial. Suppose
  now that we are given an element
  $\psi=\Sigma(\beta_{1}\alpha_1)\in\todai{f_n,\dots,f_2,f_1}{i}$ for $i\geq 4$ and
  let $Y_\bullet$ denote some $n$-angle extension of $f_1$ and suppose
  $Z_\bullet$ is the $n$-angle in which $f_i$ occurs as the $i^\text{th}$
  morphism used in defining $\psi$. The existence of $\psi$ implies that
  $f_if_{i-1}=0$ and $f_{i-1}f_{i-2}=0$, so combining this with the criterion of
  the statement, we know that we may construct the morphisms $\varphi_k$ for
  $3\leq k\leq i$ in the following diagram by repeatedly using the weak cokernel
  property.

  Then we may construct the solid part of the diagram
  \begin{center}
    \begin{tikzpicture}
      \begin{scope}
        \smalldiagram{d}{2em}{2em}{
          X_1 & X_2 & Y_3 & \cdots\\
          X_1 & X_2 & X_3 & \cdots\\
          Z_{1} & Z_{2} & Z_{3} & \cdots\\
        };
        
        \path[->, font = \scriptsize, auto]
        (d-1-1) edge node{$f_1$} (d-1-2)
        \downequal{d}{1-1}{2-1}
        (d-1-2) edge (d-1-3)
        \downequal{d}{1-2}{2-2}
        (d-1-3) edge (d-1-4)
        edge node{$\phi_3$} (d-2-3)
        
        (d-2-1) edge node{$f_1$} (d-2-2)
                edge node{$\alpha_1$} (d-3-1)
        (d-2-2) edge node{$f_2$} (d-2-3)
                edge node{$\alpha_2$} (d-3-2)
        (d-2-3) edge node{$f_3$} (d-2-4)
                edge node{$\alpha_3$} (d-3-3)
        
        (d-3-1) edge (d-3-2)
        (d-3-2) edge (d-3-3)
        (d-3-3) edge (d-3-4);
      \end{scope}
      
      \begin{scope}[xshift = 4cm, yshift = -4.5cm]
        \smalldiagram{d}{2em}{2em}{
          \cdots & Y_{i - 1} & Y_{i} & Y_{i + 1} & Y_{i+2} & \cdots & Y_{n} & \Sigma X_1\\
          \cdots & X_{i - 1} & X_i\\
          \cdots & Z_{i-1} & X_i & X_{i + 1} & Z_{i+2} & \cdots & Z_{n} & \Sigma Z_{1}\\
          &&& X_{i + 1} & X_{i + 2} & \cdots & X_n & X_{n + 1}.\\
        };
        
        \path[->, font = \scriptsize, auto]
        (d-1-1) edge (d-1-2)
        (d-1-2) edge (d-1-3)
                edge node{$\phi_{i - 1}$} (d-2-2)
        (d-1-3) edge (d-1-4)
                edge node{$\phi_i$} (d-2-3)
        (d-1-4) edge (d-1-5)
                edge[densely dashed] node{$\phi_{i + 1}'$} (d-3-4)
        (d-1-5) edge (d-1-6)
                edge[densely dashed] node{$\phi_{i + 2}'$} (d-3-5)
        (d-1-6) edge (d-1-7)
        (d-1-7) edge (d-1-8)
                edge[densely dashed] node{$\phi_n'$} (d-3-7)
        (d-1-8) edge node{$\Sigma \alpha_1$}(d-3-8)
        
        (d-2-1) edge node{$f_{i - 2}$} (d-2-2)
        (d-2-2) edge node{$f_{i - 1}$} (d-2-3)
                edge node{$\alpha_{i - 1}$} (d-3-2)
        \downequal{d}{2-3}{3-3}
        
        (d-3-1) edge (d-3-2)
        (d-3-2) edge (d-3-3)
        (d-3-3) edge node{$f_i$} (d-3-4)
        (d-3-4) edge (d-3-5)
        \downequal{d}{3-4}{4-4}
        (d-3-5) edge (d-3-6)
                edge node{$\beta_{i+2}$} (d-4-5)
        (d-3-6) edge (d-3-7)
        (d-3-7) edge (d-3-8)
                edge node{$\beta_{n}$} (d-4-7)
        (d-3-8) edge node{$\Sigma\beta_{1}$} (d-4-8)
        
        (d-4-4) edge node{$f_{i + 1}$} (d-4-5)
        (d-4-5) edge node{$f_{i + 2}$} (d-4-6)
        (d-4-6) edge node{$f_{n - 1}$} (d-4-7)
        (d-4-7) edge node{$f_n$} (d-4-8);
      \end{scope}
    \end{tikzpicture}
  \end{center}
  The tuple of morphisms
  \begin{equation*}
    (\alpha_1,\alpha_2,\alpha_3\varphi_3,\dots,\alpha_{i-1}\varphi_{i-1},\varphi_i)
  \end{equation*}
  is a partial morphism of $n$-angles and, so, applying
  \cite[Lemma~2.1]{Klapproth} we may complete it to a morphism of $n$-angles
  \begin{equation*}
    (\alpha_1,\alpha_2,\alpha_3\varphi_3,\dots,\alpha_{i-1}\varphi_{i-1},\varphi_i,
    \varphi_{i+1}',\dots,\varphi_n')
  \end{equation*}
  as illustrated by the dashed morphisms in the diagram above. Finally, we can
  express $\psi$ as an element of the iterated cofiber Toda bracket by the
  diagram
  \begin{center}
    \begin{tikzpicture}
      \begin{scope}
        \smalldiagram{d}{2em}{2em}{
          X_1 & X_2 & Y_3 & \cdots\\
          X_1 & X_2 & X_3 & \cdots\\
        };
        
        \path[->, font = \scriptsize, auto]
        (d-1-1) edge node{$f_1$} (d-1-2)
        \downequal{d}{1-1}{2-1}
        (d-1-2) edge (d-1-3)
        \downequal{d}{1-2}{2-2}
        (d-1-3) edge (d-1-4)
        edge node{$\phi_3$} (d-2-3)
        
        (d-2-1) edge node{$f_1$} (d-2-2)
        (d-2-2) edge node{$f_2$} (d-2-3)
        (d-2-3) edge node{$f_3$} (d-2-4);
      \end{scope}
      
      \begin{scope}[xshift = 4cm, yshift = -2cm]
        \smalldiagram{d}{2em}{2em}{
          \cdots & Y_{i - 1} & Y_{i} & Y_{i + 1} & Y_{i+2} & \cdots & Y_{n} & \Sigma X_1\\
          \cdots & X_{i - 1} & X_i & X_{i + 1} & X_{i + 2} & \cdots & X_n & X_{n + 1}.\\
        };
        
        \path[->, font = \scriptsize, auto]
        (d-1-1) edge (d-1-2)
        (d-1-2) edge (d-1-3)
                edge node{$\phi_{i - 1}$} (d-2-2)
        (d-1-3) edge (d-1-4)
                edge node{$\phi_i$} (d-2-3)
        (d-1-4) edge (d-1-5)
                edge node{$\phi_{i + 1}'$} (d-2-4)
        (d-1-5) edge (d-1-6)
                edge node{$\beta_{i + 2}\phi_{i + 2}'$} (d-2-5)
        (d-1-6) edge (d-1-7)
        (d-1-7) edge (d-1-8)
                edge node{$\beta_n \phi_n'$} (d-2-7)
        (d-1-8) edge node{$\Sigma(\beta_1\alpha_1)$} (d-2-8)
        
        (d-2-1) edge node{$f_{i - 2}$} (d-2-2)
        (d-2-2) edge node{$f_{i - 1}$} (d-2-3)
        (d-2-3) edge node{$f_i$} (d-2-4)
        (d-2-4) edge node{$f_{i + 1}$} (d-2-5)
        (d-2-5) edge node{$f_{i + 2}$} (d-2-6)
        (d-2-6) edge node{$f_{n - 1}$} (d-2-7)
        (d-2-7) edge node{$f_n$} (d-2-8);
      \end{scope}
    \end{tikzpicture}
  \end{center}
  This concludes the proof for all $i\geq 4$. The case $i=3$ follows the same
  argument, however, existence of an element in $\todai{f_n,\dots,f_2,f_1}{3}$
  guarantees $f_2f_1=0$, so we get $\varphi_3$ for free without any extra
  assumptions. The case $i=2$ does not rely on any version of $f_{k+1}f_k=0$,
  since we may skip directly ahead and apply \ref{N3}. The case $i=1$ is
  trivial.
\end{proof}

\begin{theorem}\label{thm:all_coincide}
  Let $\C$ be a pre-$n$-angulated category and consider a diagram of the shape
  \begin{center}
    \begin{tikzpicture}
      \diagram{d}{2em}{2em}{
        X_1 & X_2 & X_3 & \cdots & X_n & X_{n+1},\\
      };
            
      \path[->, auto, font = \scriptsize]
      (d-1-1) edge node{$f_1$} (d-1-2)
      (d-1-2) edge node{$f_2$} (d-1-3)
      (d-1-3) edge node{$f_3$} (d-1-4)
      (d-1-4) edge node{$f_{n-1}$} (d-1-5)
      (d-1-5) edge node{$f_n$} (d-1-6);
    \end{tikzpicture}
  \end{center}
  such that $f_{i+1}f_i=0$ for each $1\leq i\leq n-1$. Then
  \begin{equation*}
    \todai{f_n,\dots,f_2,f_1}j=\todai{f_n,\dots,f_2,f_1}k
  \end{equation*}
  for any $1\leq j,k\leq n$. In particular, this set coincides with the
  fiber-cofiber bracket and the iterated (co)fiber bracket.
\end{theorem}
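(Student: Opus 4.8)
The plan is to deduce this as a bookkeeping corollary of \autoref{prop:i_subsets} and \autoref{thm:main}. The key observation is that under the standing hypothesis $f_{i+1}f_i = 0$ for all $1 \leq i \leq n-1$, the \emph{conditional} inclusions in \autoref{prop:i_subsets} become unconditional: the hypothesis of part (2), namely $f_{j+1}f_j = 0$ for $1 \leq j \leq i-3$, is automatically satisfied for every $i$, and likewise the hypothesis $f_{j+1}f_j = 0$ for $i+2 \leq j \leq n-1$ of part (3). Thus for each fixed $1 \leq i \leq n$ we obtain from parts (1) and (2) the chain of inclusions
\begin{equation*}
  \todafc{f_n,\dots,f_2,f_1} \subseteq \todai{f_n,\dots,f_2,f_1}{i} \subseteq \todacc{f_n,\dots,f_2,f_1},
\end{equation*}
and, symmetrically, $\todai{f_n,\dots,f_2,f_1}{i} \subseteq \todaff{f_n,\dots,f_2,f_1}$ from part (3).

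Next I would invoke \autoref{thm:main}, which applies under exactly the same hypothesis and gives $\todaff{f_n,\dots,f_2,f_1} = \todafc{f_n,\dots,f_2,f_1} = \todacc{f_n,\dots,f_2,f_1}$. Substituting this into the chain above squeezes $\todai{f_n,\dots,f_2,f_1}{i}$ between two sets that already coincide, forcing $\todai{f_n,\dots,f_2,f_1}{i} = \todafc{f_n,\dots,f_2,f_1}$ for every $i$. In particular $\todai{f_n,\dots,f_2,f_1}{j} = \todai{f_n,\dots,f_2,f_1}{k}$ for all $1 \leq j,k \leq n$, and this common set coincides with the fiber-cofiber bracket and the iterated (co)fiber bracket, as claimed. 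One may also note that the hypothesis guarantees, via \autoref{cor:ffcc_nonempty} and \autoref{lem:nonempty_fcbracket}, that all of these sets are non-empty, so the identification is a genuine equality of cosets of the indeterminacy rather than a vacuous one.

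I expect no serious obstacle at the level of this theorem: the substantive content sits entirely upstream, in the fill-in constructions of \autoref{prop:i_subsets} (collapsing the vertical morphisms on either side of row $i$ of \autoref{figure:fcbracket} for part (1), and extending a partial morphism of $n$-angles via \cite[Lemma~2.1]{Klapproth} for part (2)) and in the coset arguments of \autoref{prop:brackets_are_cosets} and \autoref{thm:main}. The only point requiring care here is verifying that the index ranges "$1 \leq j \leq i-3$" and "$i+2 \leq j \leq n-1$" in \autoref{prop:i_subsets} are implied by the blanket assumption $f_{i+1}f_i = 0$ for all $i$ — which is immediate since every consecutive pair composes to zero — so that parts (2) and (3) are available for all $i$ simultaneously.
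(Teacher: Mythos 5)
Your argument is correct and is essentially the paper's proof: both squeeze $\todai{f_n,\dots,f_2,f_1}{i}$ between $\todafc{f_n,\dots,f_2,f_1}$ and $\todacc{f_n,\dots,f_2,f_1}\cap\todaff{f_n,\dots,f_2,f_1}$ via \autoref{prop:i_subsets} (whose side conditions are indeed subsumed by the blanket hypothesis $f_{i+1}f_i=0$) and then conclude with \autoref{thm:main}. Your extra remark on non-emptiness is harmless but not needed for the equality of sets.
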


\begin{proof}
  If the given condition is satisfied, it follows by \autoref{prop:i_subsets} that
  \begin{equation*}
    \todafc{f_n,\dots,f_2,f_1}\subseteq \todai{f_n,\dots,f_2,f_1}{i}\subseteq
    \todacc{f_n,\dots,f_2,f_1}\cap\todaff{f_n,\dots,f_2,f_1}
  \end{equation*}
  for all $1\leq i\leq n$ and, so, by \autoref{thm:main} the statement follows.
\end{proof}

\begin{remark}
  There is some redundancy in these new Toda brackets; we see that the
  definitions line up in the cases of
  \begin{equation*}
    \todai{f_n,\dots,f_2,f_1}{1}=\todacc{f_n,\dots,f_2,f_1} \quad\text{and}\quad
    \todai{f_n,\dots,f_2,f_1}{n}=\todaff{f_n,\dots,f_2,f_1}
  \end{equation*}
  and in triangulated case we further have that
  \begin{equation*}
    \todai{f_3,f_2,f_1}{2}=\todafc{f_3,f_2,f_1}.
  \end{equation*}
  Hence, these intermediate definitions more clearly resemble the fiber-cofiber
  Toda bracket of triangulated categories. However, they come with both the
  drawbacks of the iterated (co)fiber and the fiber-cofiber Toda brackets; i.e.,
  when $i\neq1,n$ they are dependent of the choice of $n$-angle extension of
  $f_i$ and, in most cases, we can have a non-empty $[i]$-intermediate Toda
  bracket even if there are consecutive morphisms that do not compose to
  zero.

  To see the dependence of extensions, we may recycle the example of
  \autoref{rem:extension_dependence}: Suppose that $n=5$ and that we have a
  diagram
  \begin{center}
    \begin{tikzpicture}    
      \diagram{d}{2em}{3em}{
        X_1 & X_2 & X_3 & X_4 & X_5 & X_6\\ 
      };
      
      \path[->, font = \scriptsize, auto]
      (d-1-1) edge node{$f_1$} (d-1-2)
      (d-1-2) edge node{$f_2$} (d-1-3)
      (d-1-3) edge node{$f_3=0$} (d-1-4)
      (d-1-4) edge node{$f_4$} (d-1-5)
      (d-1-5) edge node{$f_5$} (d-1-6);
    \end{tikzpicture}
  \end{center}
  such that $f_{i+1}f_i=0$ for all $1\leq i\leq 4$. Then, by
  \autoref{thm:all_coincide}, we know that
  \begin{equation*}
    \todai{f_5,f_4,0,f_2,f_1}3=\todafc{f_5,f_4,0,f_2,f_1}.
  \end{equation*}
  Since this fiber-cofiber Toda bracket is equal to the indeterminacy subgroup,
  as we have previously argued in \autoref{rem:extension_dependence}, it follows
  that if the indeterminacy is non-trivial and we suppose that the
  $[3]$-intermediate Toda bracket is independent of the choice of extension of
  $f_3$, we will reach a contradiction. This is because we could choose the
  extension $(TX_3)_\bullet[-1]\oplus (TX_4)[-3]$ and then the following diagram
  shows that all elements of the $[3]$-intermediate Toda bracket are trivial:
  \begin{center}
    \begin{tikzpicture}    
      \diagram{d}{2em}{3em}{
        X_1 & X_2 & X_3 & & &\\
        0 & X_3 & X_3 & X_4 & X_4 & 0\\
        & & & X_4 & X_5 & X_6. \\
      };
      
      \path[->, font = \scriptsize, auto]
      (d-1-1) edge node{$f_1$} (d-1-2)
      (d-1-2) edge node{$f_2$} (d-1-3)
      (d-2-1) edge node{} (d-2-2)
      (d-2-2) edge node{$1$} (d-2-3)
      (d-2-3) edge node{$0$} (d-2-4)
      (d-2-4) edge node{$1$} (d-2-5)
      (d-2-5) edge node{} (d-2-6)
      
      (d-3-4) edge node{$f_4$} (d-3-5)
      (d-3-5) edge node{$f_5$} (d-3-6)
      
      (d-1-1) edge node{} (d-2-1)
      (d-1-2) edge node{$f_2$} (d-2-2)
      \downequal{d}{1-3}{2-3}
      
      \downequal{d}{2-4}{3-4}
      (d-2-5) edge node{$f_4$} (d-3-5)
      (d-2-6) edge node{} (d-3-6);
    \end{tikzpicture}
  \end{center}

  To see that the $[i]$-intermediate Toda brackets may be non-empty even when
  there are two consecutive morphisms that do not compose to zero, simply
  consider the example in a pre-$5$-angulated category:
  \begin{equation*}
    \todai{1,1,0,1,0}2=\C(\Sigma X,\Sigma X).
  \end{equation*}
  This equality is illustrated by the diagram
  \begin{center}
    \begin{tikzpicture}    
      \diagram{d}{2em}{3em}{
        X & X\\ 
        X & X & X & 0 & \Sigma X & \Sigma X\\
        && X & \Sigma X & \Sigma X & \Sigma X.\\
      };
      
      \path[->, font = \scriptsize, auto]
      (d-1-1) edge node{$0$} (d-1-2)
      
      (d-2-1) edge node{$0$} (d-2-2)
      (d-2-2) edge node{$1$} (d-2-3)
      (d-2-3) edge (d-2-4)
      (d-2-4) edge (d-2-5)
      (d-2-5) edge node{$1$} (d-2-6)
      
      (d-3-3) edge node{$0$} (d-3-4)
      (d-3-4) edge node{$1$} (d-3-5)
      (d-3-5) edge node{$1$} (d-3-6)
      
      (d-1-1) edge node{$1$} (d-2-1)
      \downequal{d}{1-2}{2-2}
      
      \downequal{d}{2-3}{3-3}
      (d-2-4) edge (d-3-4)
      (d-2-5) edge node{$\psi$} (d-3-5)
      (d-2-6) edge node{$\psi$} (d-3-6);
    \end{tikzpicture}
  \end{center}
  
  Notably, however, if $n$ is small enough we may conjure a proposition similar
  to \autoref{lem:nonempty_fcbracket}: the existence of an element of the
  $[i]$-intermediate Toda bracket implies the four identities $f_{j+1}f_j=0$ for
  $i-2\leq j\leq i+1$ and, so, if $n=5$ and there exists an element of the
  $[3]$-intermediate bracket, it follows that all the morphisms in the diagram
  must compose to zero. Similarly if $n=4$ and there is an element of either the
  $[2]$-intermediate or $[3]$-intermediate Toda bracket the morphisms must
  compose trivially. The converse, i.e., that if all $2$-fold composites are
  zero then all the various brackets must be non-empty, is a consequence of
  \autoref{thm:all_coincide} and \autoref{lem:nonempty_fcbracket}.
\end{remark}

%
%
\section{The juggling formulas}
\label{sec:juggling_formulas}

Toda brackets satisfy various formulas involving composition and addition of
morphisms. For Massey products, the algebraic analogue of Toda brackets,
references on juggling formulas include \cite{May69}, \cite[§A1.4]{Ravenel04},
and \cite[§2.2]{Isaksen19}. For Toda brackets, references are more scattered;
see \cite[Lemma~3.5]{CF} for a small sample. In this section, we prove various
juggling formulas for Toda brackets in pre-$n$-angulated categories. In
particular, this provides a collected reference for $3$-fold Toda brackets in
pretriangulated categories.

\begin{proposition}\label{prop:fi_is_zero}
  Let $f_1, f_2, \ldots, f_n$ be composable morphisms in $\C$ as in
  \autoref{eq:seq} such that there is some $1\leq i\leq n$ satisfying $f_i = 0$.
  If $\todafc{f_n,\dots,f_2,f_1}$ is non-empty, then $0\in
  \todafc{f_n,\dots,f_2,f_1}$. The same statement holds for
  $\todacc{f_n,\dots,f_2,f_1}$ and $\todaff{f_n,\dots,f_2,f_1}$.
\end{proposition}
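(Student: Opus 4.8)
The plan is to establish the statement first for $\todacc$, then for $\todaff$ by duality, and lastly for $\todafc$. Note that, unlike for $\todafc$, the hypothesis $\todacc{f_n,\dots,f_2,f_1}\neq\emptyset$ does \emph{not} force the $2$-fold composites $f_{j+1}f_j$ to vanish, so \autoref{thm:main} is not directly available in the $\todacc$ and $\todaff$ cases; a hands-on argument is needed there.

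Assume $\todacc{f_n,\dots,f_2,f_1}\neq\emptyset$ and $f_i=0$ for some $i\geq 2$. I would take any commutative diagram of the form \autoref{eq:TodaCC} representing an element, with $n$-angle extension $Y_\bullet$ of $f_1$ across the top and fill-in morphisms $\phi_3,\dots,\phi_n,\psi$. Keeping this $Y_\bullet$ together with $\phi_3,\dots,\phi_i$, I would replace each of $\phi_{i+1},\dots,\phi_n$ and $\psi$ by the zero morphism. Then the squares involving $f_1,\dots,f_{i-1}$ are unchanged and still commute, the square involving $f_i$ commutes because $f_i\phi_i=0$, and each square involving $f_{i+1},\dots,f_n$ commutes because all of its morphisms are now zero. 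This diagram exhibits $0\in\todacc{f_n,\dots,f_2,f_1}$.

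The only delicate case is $i=1$ (and dually $i=n$ for $\todaff$): there the identity morphisms forced in the first two columns of \autoref{eq:TodaCC} obstruct the naive truncation above. I would handle it via \autoref{lem:AnyExtension}(1), which allows us to compute $\todacc{f_n,\dots,f_2,f_1}$ using the split $n$-angle extension $Y_\bullet\coloneq(TX_2)_\bullet[-1]\oplus(TX_1)_\bullet[1]$ of $f_1=0$. As the bracket is non-empty, a representing diagram over this $Y_\bullet$ exists; and since $Y_\bullet$ is split, its second morphism $X_2\to Y_3$ is a split monomorphism while the morphisms further along the top row vanish or (in the final spot) are isomorphisms. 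Consequently $\phi_3$ is forced to agree with $f_2$ but $\phi_4,\dots,\phi_n$ may all be taken to be zero, and the last square of the diagram then forces $\psi=0$, so $0\in\todacc{f_n,\dots,f_2,f_1}$.

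The statement for $\todaff$ follows by the dual argument, truncating from the left when $i\leq n-1$ and using \autoref{lem:AnyExtension}(2) with the split extension of $f_n=0$ when $i=n$. Finally, if $\todafc{f_n,\dots,f_2,f_1}\neq\emptyset$, then \autoref{lem:nonempty_fcbracket} gives $f_{j+1}f_j=0$ for all $j$, so by \autoref{thm:main} the three brackets coincide and, in particular, $\todacc{f_n,\dots,f_2,f_1}$ and $\todaff{f_n,\dots,f_2,f_1}$ are non-empty; applying the cases already treated — $\todacc$ when $i\geq 2$ and $\todaff$ when $i\leq n-1$ — then yields $0\in\todafc{f_n,\dots,f_2,f_1}$. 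I expect the bulk of the real work to be the bookkeeping in the two edge cases $i\in\{1,n\}$; everything else amounts to truncating an existing representing diagram to zero past the vanishing morphism.
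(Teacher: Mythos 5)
Your proposal is correct and follows essentially the same route as the paper: for the iterated (co)fiber brackets you truncate the comparison morphisms to zero past the vanishing $f_i$, handle the edge case $f_1=0$ (dually $f_n=0$) via the split $n$-angle extension, and reduce the fiber-cofiber case to these using \autoref{lem:nonempty_fcbracket} and \autoref{thm:main}. The only difference is cosmetic: the paper writes down the split-extension diagram for the $f_1=0$ case directly, whereas you extract it from an existing representative over that extension via \autoref{lem:AnyExtension}.
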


\begin{proof}
  Assuming $\todafc{f_n,\dots,f_2,f_1} \neq \emptyset$, we get by
  \autoref{lem:nonempty_fcbracket} and \autoref{thm:main} that all the different
  variants of the Toda brackets coincide and, so, it suffices to prove the
  statement for the iterated cofiber Toda bracket. The iterated fiber Toda
  bracket is dual.

  Suppose first that $f_1=0$. Then we may construct the commutative diagram
  \begin{center}
    \begin{tikzpicture}
      \diagram{d}{2.5em}{2em}{
        X_1 & X_2 & X_2 & 0 & \cdots &0& \Sigma X_1 & \Sigma X_{1}\\
        X_1 & X_2 & X_3 & X_4 & \cdots &X_{n-1}& X_n & X_{n+1}\\
      };
      
      \path[->, auto, font = \scriptsize]
      (d-1-1) edge node{$0$} (d-1-2)
      (d-1-2) edge node{$1$} (d-1-3)
      (d-1-3) edge (d-1-4)
      (d-1-4) edge (d-1-5)
      (d-1-5) edge (d-1-6)
      (d-1-6) edge (d-1-7)
      (d-1-7) edge node{$1$} (d-1-8)
      
      (d-2-1) edge node{$0$} (d-2-2)
      (d-2-2) edge node{$f_2$} (d-2-3)
      (d-2-3) edge node{$f_3$} (d-2-4)
      (d-2-4) edge node{$f_4$} (d-2-5)
      (d-2-5) edge node{$f_{n-2}$} (d-2-6)
      (d-2-6) edge node{$f_{n-1}$} (d-2-7)
      (d-2-7) edge node{$f_n$} (d-2-8)
      
      \downequal{d}{1-1}{2-1}
      \downequal{d}{1-2}{2-2}
      (d-1-3) edge node{$f_2$} (d-2-3)
      (d-1-4) edge (d-2-4)
      (d-1-6) edge (d-2-6)
      (d-1-7) edge node{$f_{n-1}$} (d-2-7)
      (d-1-8) edge node{$0$} (d-2-8);
    \end{tikzpicture}
  \end{center}
  which exhibits $0\in\todacc{f_n,\dots,f_2,f_1}$. Now suppose $f_i=0$ for some
  $2\leq i\leq n$. We know the Toda bracket $\todacc{f_n,\dots,f_2,f_1}$ is
  non-empty, so simply take some element and represent it by the diagram
  \begin{center}
    \begin{tikzpicture}
      \diagram{d}{2.5em}{2em}{
        X_1 & X_2 & Y_3 & \cdots &  Y_i & Y_{i+1} & \cdots & Y_n & \Sigma X_{1}\\
        X_1 & X_2 & X_3 & \cdots & X_i & X_{i+1} & \cdots & X_n & X_{n+1}.\\
      };
      
      \path[->, auto, font = \scriptsize]
      (d-1-1) edge node{$f_1$} (d-1-2)
      (d-1-2) edge node{$y_2$} (d-1-3)
      (d-1-3) edge node{$y_3$} (d-1-4)
      (d-1-4) edge node{$y_{i-1}$} (d-1-5)
      (d-1-5) edge node{$y_i$} (d-1-6)
      (d-1-6) edge node{$y_{i+1}$} (d-1-7)
      (d-1-7) edge node{$y_{n-1}$} (d-1-8)
      (d-1-8) edge node{$y_{n}$} (d-1-9)
      
      (d-2-1) edge node{$f_1$} (d-2-2)
      (d-2-2) edge node{$f_2$} (d-2-3)
      (d-2-3) edge node{$f_3$} (d-2-4)
      (d-2-4) edge node{$f_{i-1}$} (d-2-5)
      (d-2-5) edge node{$0$} (d-2-6)
      (d-2-6) edge node{$f_{i+1}$} (d-2-7)
      (d-2-7) edge node{$f_{n-1}$} (d-2-8)
      (d-2-8) edge node{$f_{n}$} (d-2-9)
      
      \downequal{d}{1-1}{2-1}
      \downequal{d}{1-2}{2-2}
      (d-1-3) edge node{$\varphi_3$} (d-2-3)
      (d-1-5) edge node{$\varphi_{i}$} (d-2-5)
      (d-1-6) edge node{$\varphi_{i+1}$} (d-2-6)
      (d-1-8) edge node{$\varphi_{n}$} (d-2-8)
      (d-1-9) edge node{$\psi$} (d-2-9);
    \end{tikzpicture}
  \end{center}
  However, if we were to exchange $\varphi_j$ and $\psi$ with 0 for all $j\geq
  i+1$, the resulting diagram would still commute and it would exhibit $0$ as an
  element of the Toda bracket.
\end{proof}

The assumption in \autoref{prop:fi_is_zero} cannot be dropped in general. For
instance, in \autoref{eq:ff_cc_not_equal}, we have $f_2 = 0$, the iterated
cofiber Toda bracket is non-empty while the iterated fiber Toda bracket is
empty.

\begin{lemma}\label{lem:MinusSigns}
  Let $f_1, f_2, \ldots, f_n$ be composable morphisms in $\C$ as in
  \autoref{eq:seq}. Then for any two indices $1 \leq j < k \leq n$, we obtain an
  isomorphic diagram
  \begin{center}
    \begin{tikzpicture}
      \diagram{d}{2em}{2em}{
        X_1 & X_2 & X_3 & \cdots & X_n & X_{n + 1}\\
      };
      
      \path[->, font = \scriptsize, auto]
      (d-1-1) edge node{$f_1'$} (d-1-2)
      (d-1-2) edge node{$f_2'$} (d-1-3)
      (d-1-3) edge node{$f_3'$} (d-1-4)
      (d-1-4) edge node{$f_{n - 1}'$} (d-1-5)
      (d-1-5) edge node{$f_n'$} (d-1-6);
    \end{tikzpicture}
  \end{center}
  by putting a minus sign in front of $f_j$ and $f_k$:
  \begin{equation*}
    f'_i = \begin{cases}
      -f_i &\text{if } i=j,k \\
      f_i &\text{otherwise.} \\
    \end{cases}
  \end{equation*}
\end{lemma}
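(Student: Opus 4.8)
The plan is to write down an explicit isomorphism of diagrams whose components are signed identity morphisms, and then reduce the lemma to an elementary sign check. Concretely, I would set
\[
  \epsilon_i = \begin{cases} -1 & \text{if } j < i \le k, \\ 1 & \text{otherwise,} \end{cases}
\]
and take $g_i = \epsilon_i\, 1_{X_i} \colon X_i \to X_i$. Each $g_i$ is then an isomorphism in $\C$, so the entire content of the lemma reduces to verifying that the squares commute, i.e.\ that $g_{i+1} f_i = f_i' g_i$ for $1 \le i \le n$.

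For that step I would record that $f_i' = s_i f_i$ with $s_i = -1$ for $i \in \{j,k\}$ and $s_i = 1$ otherwise, so that the square condition becomes the numerical identity $\epsilon_{i+1} = s_i \epsilon_i$. This I would check by the obvious case split: when $i \notin \{j,k\}$ the indices $i$ and $i+1$ lie on the same side of the interval $(j,k]$, so $\epsilon_{i+1} = \epsilon_i$, matching $s_i = 1$; at $i = j$ one crosses into the interval, so $\epsilon_{j+1} = -\epsilon_j$, matching $s_j = -1$; and at $i = k$ one crosses out of it, so $\epsilon_{k+1} = -\epsilon_k$, matching $s_k = -1$. Hence $(g_1, \dots, g_{n+1})$ is an isomorphism from the original diagram to the signed one.

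Finally I would note that because $1 \le j < k \le n$ we have $\epsilon_1 = \epsilon_{n+1} = 1$, so $g_1$ and $g_{n+1}$ are identity morphisms; combined with \autoref{lem:InvariantIso} this shows that all three Toda brackets of the two diagrams are literally equal (not merely identified via the induced bijection), which is the form in which the lemma will be applied to absorb the $(-1)^n$ signs coming from rotations. I do not anticipate any genuine obstacle; the only point requiring care is arranging the propagation of signs so that exactly the two positions $j$ and $k$ see a change while both endpoints remain $+1$.
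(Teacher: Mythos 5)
Your argument is correct: the paper states this lemma without proof, and your choice of $g_i=\epsilon_i 1_{X_i}$ with $\epsilon_i=-1$ exactly for $j<i\le k$, together with the check $\epsilon_{i+1}=s_i\epsilon_i$, is precisely the intended (and evidently omitted) justification. Your closing observation that $\epsilon_1=\epsilon_{n+1}=1$, so that via \autoref{lem:InvariantIso} the Toda brackets of the two diagrams are literally equal, is also exactly how the lemma is used in the proof of \ref{item:Minus}.
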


\begin{proposition}[Subadditivity]\label{prop:subadd}
  Let $\C$ be a pre-$n$-angulated category and consider the diagram
  \begin{center}
    \begin{tikzpicture}
      \diagram{d}{2em}{2em}{
        X_1 & X_2 & X_3 & \cdots & X_n & X_{n+1},\\
      };
      
      \path[->, auto, font = \scriptsize]
      (d-1-1) edge node{$f_1$} (d-1-2)
      (d-1-2) edge node{$f_2$} (d-1-3)
      (d-1-3) edge node{$f_3$} (d-1-4)
      (d-1-4) edge node{$f_{n-1}$} (d-1-5)
      (d-1-5) edge node{$f_n$} (d-1-6);
    \end{tikzpicture}
  \end{center}
  and morphisms $f_j'$ that share source and target with the morphisms $f_j$ for
  $1\leq j\leq n$, and suppose that the identities
  \begin{equation*}
    f_{i+1}f_i=0,\quad f_{i+1}'f_i=0\quad\text{and}\quad f_{i+1}f_i'=0
  \end{equation*}
  are satisfied for $1\leq i\leq n-1$. Then the following statements hold:
  \begin{enumerate}[label=(\arabic*), ref={Proposition
      \theproposition.(\arabic*)}]
  \item $\toda{f_n+f_n',f_{n-1},\dots,f_2,f_1}\subseteq
    \toda{f_n,f_{n-1},\dots,f_2,f_1} + \toda{f_n',f_{n-1},\dots,f_2,f_1}$.
  \item $\toda{f_n,\dots,f_2,f_1+f_1'}\subseteq \toda{f_n,\dots,f_2,f_1} +
    \toda{f_n,f_{n-1},\dots,f_2,f_1'}$.
  \item For $2\leq i\leq n-1$, we have the identities
    \begin{equation*}
      \toda{f_n,\dots,f_i+f_i',\dots,f_2,f_1} = \toda{f_n,\dots,f_i,\dots,f_2,f_1} +
      \toda{f_n,\dots,f_i',\dots,f_2,f_1}.
    \end{equation*}
  \item \label{item:Minus} For $1\leq i\leq n$, we have
    \begin{equation*}
      \toda{f_n, \ldots, -f_i, \ldots, f_1} = -\toda{f_n, \ldots, f_i, \ldots, f_1}. 
    \end{equation*}
  \end{enumerate}
\end{proposition}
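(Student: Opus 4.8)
The plan is to establish the endpoint inclusions (1) and (2) directly, to obtain the middle identity (3) by proving a single inclusion and then appealing to the coset structure, and to deduce (4) formally from (3). Throughout I would use freely that each of these Toda brackets is a coset of its indeterminacy (\autoref{prop:brackets_are_cosets}), that $\toda{-}$ may be represented by an iterated cofiber or iterated fiber diagram as is convenient, and that in any $n$-angle consecutive morphisms compose to zero and satisfy the weak cokernel property.

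For (1): given $\psi\in\toda{f_n+f_n',f_{n-1},\dots,f_1}$, represent it by an iterated cofiber diagram with top row an $n$-angle extension $Y_\bullet$ of $f_1$ and vertical maps $1,1,\varphi_3,\dots,\varphi_n,\psi$; the crucial point is that $\varphi_3,\dots,\varphi_n$ depend only on $f_1,\dots,f_{n-1}$. From $f_n\varphi_n y_{n-1}=f_nf_{n-1}\varphi_{n-1}=0$ and the analogous identity for $f_n'$, the weak cokernel property of $Y_\bullet$ yields $\psi_1,\psi_2\colon\Sigma X_1\to X_{n+1}$ with $\psi_1y_n=f_n\varphi_n$ and $\psi_2y_n=f_n'\varphi_n$; keeping $\varphi_3,\dots,\varphi_n$ and replacing $\psi$ by $\psi_1$, respectively $\psi_2$, exhibits $\psi_1\in\toda{f_n,\dots,f_1}$ and $\psi_2\in\toda{f_n',\dots,f_1}$. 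Since $(\psi-\psi_1-\psi_2)y_n=0$, the weak cokernel property applied to the rotation $Y_\bullet[1]$ shows $\psi-\psi_1-\psi_2$ factors through $(-1)^n\Sigma f_1$ and hence lies in $(\Sigma f_1)^\ast\C(\Sigma X_2,X_{n+1})$, which is contained in the indeterminacy of $\toda{f_n',\dots,f_1}$; therefore $\psi-\psi_1=\psi_2+(\psi-\psi_1-\psi_2)\in\toda{f_n',\dots,f_1}$, so $\psi\in\toda{f_n,\dots,f_1}+\toda{f_n',\dots,f_1}$. Statement (2) is the dual argument, run on an iterated fiber diagram with bottom row an $n$-angle extension of $f_1$.

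For (3): when $2\le i\le n-1$ the indeterminacy $G=(f_n)_\ast\C(\Sigma X_1,X_n)+(\Sigma f_1)^\ast\C(\Sigma X_2,X_{n+1})$ does not involve $f_i$, so all three brackets in question are cosets of the same group $G$; hence it suffices to prove the inclusion ``$\subseteq$''. Represent $\psi\in\toda{f_n,\dots,f_i+f_i',\dots,f_1}$ by an iterated cofiber diagram with top row an $n$-angle extension $Y_\bullet$ of $f_1$ and verticals $1,1,\varphi_3,\dots,\varphi_n,\varphi_{n+1}:=\psi$; the squares at positions $<i$ involve only $f_1,\dots,f_{i-1}$. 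Using $f_i\varphi_i y_{i-1}=f_if_{i-1}\varphi_{i-1}=0$ and the same for $f_i'$, the weak cokernel property gives $\chi_{i+1},\chi_{i+1}'\colon Y_{i+1}\to X_{i+1}$ with $\chi_{i+1}y_i=f_i\varphi_i$ and $\chi_{i+1}'y_i=f_i'\varphi_i$. Starting from $\varphi_3,\dots,\varphi_i$ followed by $\chi_{i+1}$ one builds $\varphi^{(1)}_{i+2},\dots,\varphi^{(1)}_n,\psi_1$ row by row using the weak cokernel property at each stage (the required vanishing at stage $k$ being $f_k\varphi^{(1)}_k y_{k-1}=f_kf_{k-1}\varphi^{(1)}_{k-1}=0$), giving $\psi_1\in\toda{f_n,\dots,f_i,\dots,f_1}$; symmetrically $\chi_{i+1}'$ produces $\psi_2\in\toda{f_n,\dots,f_i',\dots,f_1}$. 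An induction then shows that for $i\le k\le n$ the discrepancy $d_{k+1}:=\varphi_{k+1}-\varphi^{(1)}_{k+1}-\varphi^{(2)}_{k+1}$ (setting $\varphi^{(1)}_k=\varphi^{(2)}_k=\varphi_k$ for $k\le i$ and $\mu_i:=0$) has the form $f_k\mu_k+\mu_{k+1}y_{k+1}$; evaluating at $k=n$ and using $y_{n+1}=(-1)^n\Sigma f_1$ gives $\psi-\psi_1-\psi_2=f_n\mu_n+(-1)^n\mu_{n+1}\Sigma f_1\in G$, and since $\toda{f_n,\dots,f_i',\dots,f_1}$ is a coset of $G$ this yields $\psi\in\toda{f_n,\dots,f_i,\dots,f_1}+\toda{f_n,\dots,f_i',\dots,f_1}$. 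I expect this last induction — carrying along the two correction terms (a composite with $f_k$ and a morphism factoring through $y_{k+1}$) and checking they collapse into exactly the two summands of $G$ at the final step — to be the main piece of bookkeeping.

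For (4): when $2\le i\le n-1$, apply (3) with $f_i'=-f_i$ (the standing hypotheses guarantee the relevant $2$-fold composites vanish). Since $f_i+f_i'=0$, \autoref{prop:fi_is_zero} together with the coset structure gives $\toda{f_n,\dots,0,\dots,f_1}=G$, hence $G=\toda{f_n,\dots,f_i,\dots,f_1}+\toda{f_n,\dots,-f_i,\dots,f_1}$; writing the two summands as $\psi+G$ and $\psi'+G$ forces $\psi+\psi'\in G$, whence $\toda{f_n,\dots,-f_i,\dots,f_1}=\psi'+G=-\psi+G=-\toda{f_n,\dots,f_i,\dots,f_1}$, using $-G=G$. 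For $i=1$ I would combine \autoref{lem:MinusSigns} (negating $f_1$ and $f_2$), which gives $\toda{f_n,\dots,f_2,f_1}=\toda{f_n,\dots,-f_2,-f_1}$, with the already-proved case $i=2$ applied to the diagram having $-f_1$ in the first slot, which gives $\toda{f_n,\dots,-f_2,-f_1}=-\toda{f_n,\dots,f_2,-f_1}$; comparing these yields $\toda{f_n,\dots,f_2,-f_1}=-\toda{f_n,\dots,f_2,f_1}$. The case $i=n$ is symmetric, negating $f_{n-1}$ and $f_n$ and using the case $i=n-1$.
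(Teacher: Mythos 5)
Your argument is correct, and its core device --- splitting the fill-in morphisms of an iterated (co)fiber diagram by repeated use of the weak cokernel property and tracking the discrepancy until it lands in the indeterminacy --- is the same as the paper's. Where you genuinely diverge is in how the rest is assembled. The paper proves one unified inclusion $\toda{f_n,\dots,f_i+f_i',\dots,f_1}\subseteq\toda{f_n,\dots,f_i,\dots,f_1}+\toda{f_n,\dots,f_i',\dots,f_1}$ for \emph{every} position $i$ (absorbing each correction term into the second summand rather than carrying your explicit $\mu_k$'s), and then obtains the reverse inclusion in (3) by explicitly constructing a companion element $\mu\in\todacc{f_n,\dots,f_i',\dots,f_1}$ whose fill-ins agree with those of $\psi$ up to stage $i$, so that $\psi+\mu$ represents the sum bracket and $\psi+\psi'$ differs from it by the indeterminacy. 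You instead handle the endpoint cases (1)--(2) by a shorter one-step version of the same idea and, for (3), observe that for $2\le i\le n-1$ all three sets are cosets of the \emph{same} subgroup $G$ (by \autoref{prop:brackets_are_cosets}), so one inclusion already forces equality --- a clean simplification that dispenses with the $\mu$-construction. For (4) the routes differ more: the paper flips the sign directly in the iterated cofiber diagram (after using \autoref{lem:InvariantIso} and \autoref{lem:MinusSigns} to move the sign to position $n$), an argument that needs no vanishing hypotheses at all, whereas you deduce (4) from (3) together with \autoref{prop:fi_is_zero} (so that $\toda{f_n,\dots,0,\dots,f_1}=G$) for $2\le i\le n-1$, and then bootstrap the cases $i=1,n$ via \autoref{lem:MinusSigns} and the already-proved interior case; this is valid for the proposition as stated but uses the composition hypotheses that the paper's proof of (4) avoids. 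One small slip: in your sketch of (2), the bottom row of the iterated fiber diagram should be an $n$-angle \emph{ending in $f_n$} (an extension of $f_n$, as in \autoref{lem:AnyExtension}), not an extension of $f_1$; the dual argument then goes through as you intend.
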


\begin{proof}
  We claim that for all $1\leq i\leq n$ it is the case that
  \begin{equation*}
    \toda{f_n,\dots,f_i+f_i',\dots,f_2,f_1}\subseteq
    \toda{f_n,\dots,f_i,\dots,f_2,f_1}+\toda{f_n,\dots,f_i',\dots,f_2,f_1},
  \end{equation*}
  which would prove \textbf{(1)}, \textbf{(2)}, and half of \textbf{(3)}. To
  prove the claim, consider an element $\psi\in
  \todacc{f_n,\dots,f_i+f_i',\dots,f_2,f_1}$ represented by the diagram
  \begin{center}
    \begin{tikzpicture}
      \diagram{d}{2.5em}{2em}{
        X_1 & X_2 & Y_3 & \cdots &  Y_i &[0.5em] Y_{i+1} & \cdots & Y_n & \Sigma X_{1}\\
        X_1 & X_2 & X_3 & \cdots & X_i & X_{i+1} & \cdots & X_n & X_{n+1}.\\
      };
      
      \path[->, auto, font = \scriptsize]
      (d-1-1) edge node{$f_1$} (d-1-2)
      (d-1-2) edge node{$y_2$} (d-1-3)
      (d-1-3) edge node{$y_3$} (d-1-4)
      (d-1-4) edge node{$y_{i-1}$} (d-1-5)
      (d-1-5) edge node{$y_i$} (d-1-6)
      (d-1-6) edge node{$y_{i+1}$} (d-1-7)
      (d-1-7) edge node{$y_{n-1}$} (d-1-8)
      (d-1-8) edge node{$y_{n}$} (d-1-9)
      
      (d-2-1) edge node{$f_1$} (d-2-2)
      (d-2-2) edge node{$f_2$} (d-2-3)
      (d-2-3) edge node{$f_3$} (d-2-4)
      (d-2-4) edge node{$f_{i-1}$} (d-2-5)
      (d-2-5) edge node{$f_i+f_i'$} (d-2-6)
      (d-2-6) edge node{$f_{i+1}$} (d-2-7)
      (d-2-7) edge node{$f_{n-1}$} (d-2-8)
      (d-2-8) edge node{$f_{n}$} (d-2-9)
      
      \downequal{d}{1-1}{2-1}
      \downequal{d}{1-2}{2-2}
      (d-1-3) edge node{$\varphi_3$} (d-2-3)
      (d-1-5) edge node{$\varphi_{i}$} (d-2-5)
      (d-1-6) edge node{$\varphi_{i+1}$} (d-2-6)
      (d-1-8) edge node{$\varphi_{n}$} (d-2-8)
      (d-1-9) edge node{$\psi$} (d-2-9);
    \end{tikzpicture}
  \end{center}
  We take $\varphi_1\coloneq 1_{X_1}$, $\varphi_2\coloneq 1_{X_2}$, and
  $y_{-1}\coloneq(-1)^n\Sigma^{-1}y_n$. By assumption we have
  \begin{equation*}
    f_i\varphi_i y_{i-1}=f_if_{i-1}\varphi_{i-1}=0\quad\text{and}\quad
    f_i'\varphi_iy_{i-1}=f_i'f_{i-1}\varphi_{i-1}=0,
  \end{equation*}
  so there must be morphisms $\varphi_{i+1}^{(1)},\varphi_{i+1}^{(2)}\colon
  Y_{i+1}\to X_{i+1}$ such that $\varphi_{i+1}^{(1)}y_i=f_i\varphi_i$ and
  $\varphi_{i+1}^{(2)}y_i=f_i'\varphi_i$. We then see that
  \begin{equation*}
    (\varphi_{i+1}-\varphi_{i+1}^{(1)}-\varphi_{i+1}^{(2)})y_i =(f_i+f_i')\varphi_i -
    f_i\varphi_i-f_i'\varphi_i=0
  \end{equation*}
  so there is some $h_{i+1}\colon Y_{i+2}\to X_{i+1}$ such that
  \begin{equation*}
    \varphi_{i+1}=\varphi_{i+1}^{(1)}+\varphi_{i+1}^{(2)}+h_{i+1}y_{i+1}.
  \end{equation*}
  We then take $\tilde\varphi_{i+1}^{(2)}\coloneq
  \varphi_{i+1}^{(2)}+h_{i+1}y_{i+1}$; note that we still have the relation
  \begin{equation*}
    \tilde\varphi^{(2)}_{i+1}y_i=(\varphi_{i+1}^{(2)}+h_{i+1}y_{i+1})y_i =
    \varphi_{i+1}^{(2)}y_i = f_i'\varphi_i.
  \end{equation*}
  Now suppose we have commutative diagrams 
  \begin{center}
    \begin{tikzpicture}
      \smalldiagram{d}{2.5em}{2em}{
        X_1 & X_2 & Y_3 & \cdots &  Y_i & Y_{i+1} & \cdots & Y_j & Y_{j+1} & \cdots\\
        X_1 & X_2 & X_3 & \cdots & X_i & X_{i+1} & \cdots & X_j & X_{j+1} & \cdots\\
      };
      
      \path[->, auto, font = \scriptsize]
      (d-1-1) edge node{$f_1$} (d-1-2)
      (d-1-2) edge node{$y_2$} (d-1-3)
      (d-1-3) edge node{$y_3$} (d-1-4)
      (d-1-4) edge node{$y_{i-1}$} (d-1-5)
      (d-1-5) edge node{$y_i$} (d-1-6)
      (d-1-6) edge node{$y_{i+1}$} (d-1-7)
      (d-1-7) edge node{$y_{j-1}$} (d-1-8)
      (d-1-8) edge node{$y_{j}$} (d-1-9)
      (d-1-9) edge node{$y_{j+1}$} (d-1-10)
      
      (d-2-1) edge node{$f_1$} (d-2-2)
      (d-2-2) edge node{$f_2$} (d-2-3)
      (d-2-3) edge node{$f_3$} (d-2-4)
      (d-2-4) edge node{$f_{i-1}$} (d-2-5)
      ([yshift=-.2em] d-2-5.east) edge node[swap]{$f_i'$} ([yshift=-.2em] d-2-6.west)
      ([yshift=.2em] d-2-5.east) edge node{$f_i$} ([yshift=.2em] d-2-6.west)
      (d-2-6) edge node{$f_{i+1}$} (d-2-7)
      (d-2-7) edge node{$f_{j-1}$} (d-2-8)
      (d-2-8) edge node{$f_{j}$} (d-2-9)
      (d-2-9) edge node{$f_{j+1}$} (d-2-10)
      
      \downequal{d}{1-1}{2-1}
      \downequal{d}{1-2}{2-2}
      (d-1-3) edge node{$\varphi_3$} (d-2-3)
      (d-1-5) edge node{$\varphi_{i}$} (d-2-5)
      ([xshift=-.2em] d-1-6.south) edge node[swap]{$\varphi_{i+1}^{(1)}$}
      ([xshift=-.2em] d-2-6.north)
      ([xshift=.2em] d-1-6.south) edge node{$\tild\varphi_{i+1}^{(2)}$}
      ([xshift=.2em] d-2-6.north)
      ([xshift=-.2em] d-1-8.south) edge node[swap]{$\varphi_{j}^{(1)}$}
      ([xshift=-.2em] d-2-8.north)
      ([xshift=.2em] d-1-8.south) edge node{$\tild\varphi_{j}^{(2)}$}
      ([xshift=.2em] d-2-8.north);
    \end{tikzpicture}
  \end{center}
  for some $j\geq i+1$ such that $\varphi_j = \varphi^{(1)}_j +
  \tilde\varphi^{(2)}_j$. Then we may build upon each of the summands to get
  morphisms $\varphi_{j+1}^{(1)},\varphi_{j+1}^{(2)}\colon Y_{j+1}\to X_{j+1}$
  such that $\varphi_{j+1}^{(1)}y_j=f_j\varphi^{(1)}_j$ and
  $\varphi_{j+1}^{(2)}y_j=f_j\tilde\varphi^{(2)}_j$, since
  \begin{equation*}
    f_j\varphi_{j}^{(1)}y_{j-1}=f_jf_{j-1}\varphi_{j-1}^{(1)}=0\quad\text{and}\quad
    f_j\tilde\varphi_{j}^{(2)}y_{j-1}=f_jf_{j-1}\tilde\varphi_{j-1}^{(2)}=0.
  \end{equation*}
  Then we see that 
  \begin{align*}
    (\varphi_{j+1}-\varphi_{j+1}^{(1)}-\varphi^{(2)}_{j+1})y_j
    =f_j(\varphi_j-\varphi_j^{(1)}-\tilde\varphi^{(2)}_{j})=0
  \end{align*}
  so, like earlier, there must be some morphism $h_{j+1}\colon Y_{j+1}\to
  X_{j+1}$ such that
  \begin{equation*}
    \varphi_{j+1}=\varphi_{j+1}^{(1)}+\varphi_{j+1}^{(2)}+h_{j+1}y_{j+1}
  \end{equation*}
  and, so, we take
  $\tilde\varphi_{j+1}^{(2)}\coloneq\varphi^{(2)}_{j+1}+h_{j+1}y_{j+1}$ and get
  the identity
  \begin{equation*}
    \varphi_{j+1}=\varphi_{j+1}^{(1)}+\tilde\varphi_{j+1}^{(2)}.
  \end{equation*}
  Thus, by setting $j=n$, it follows by induction that there are morphisms
  \begin{equation*}
    \psi^{(1)}\in \todacc{f_n,\dots,f_i,\dots,f_2,f_1}\quad\text{and}\quad
    \tilde\psi^{(2)}\in \todacc{f_n,\dots,f_i',\dots,f_2,f_1}
  \end{equation*}
  such that
  \begin{equation*}
    \psi=\psi^{(1)}+\tilde\psi^{(2)}
  \end{equation*}
  and the claim follows.
  
  \textbf{(3)} Suppose now that $2\leq i \leq n-1$ and consider two elements
  \begin{equation*}
    \psi\in\todacc{f_n,\dots,f_i,\dots,f_2,f_1}\quad\text{and}\quad
    \psi'\in\todacc{f_n,\dots,f_i',\dots,f_2,f_1}.
  \end{equation*}
  We may represent these by iterated cofiber diagrams that share the same top
  row and with vertical morphisms given by, respectively, $\varphi_j$ and
  $\varphi_j'$ for $3\leq j\leq n$. We may now use the morphisms $\varphi_k$ for
  $3\leq k\leq i$ to generate a new element
  \begin{equation*}
    \mu\in\todacc{f_n,\dots,f_i',\dots,f_2,f_1}
  \end{equation*}
  by repeatedly applying the weak cokernel property. Thus we may represent $\mu$
  in the iterated cofiber Toda bracket with vertical morphism $\nu_k$ where
  $\nu_k=\varphi_k$ for $3\leq k\leq i$:
  \begin{center}
    \begin{tikzpicture}
      \diagram{d}{2.5em}{2em}{
        X_1 & X_2 & Y_3 & \cdots &  Y_i & Y_{i+1} & \cdots & Y_n & \Sigma X_{1}\\
        X_1 & X_2 & X_3 & \cdots & X_i & X_{i+1} & \cdots & X_n & X_{n+1}.\\
      };
      
      \path[->, auto, font = \scriptsize]
      (d-1-1) edge node{$f_1$} (d-1-2)
      (d-1-2) edge node{$y_2$} (d-1-3)
      (d-1-3) edge node{$y_3$} (d-1-4)
      (d-1-4) edge node{$y_{i-1}$} (d-1-5)
      (d-1-5) edge node{$y_i$} (d-1-6)
      (d-1-6) edge node{$y_{i+1}$} (d-1-7)
      (d-1-7) edge node{$y_{n-1}$} (d-1-8)
      (d-1-8) edge node{$y_{n}$} (d-1-9)
      
      (d-2-1) edge node{$f_1$} (d-2-2)
      (d-2-2) edge node{$f_2$} (d-2-3)
      (d-2-3) edge node{$f_3$} (d-2-4)
      (d-2-4) edge node{$f_{i-1}$} (d-2-5)
      (d-2-5) edge node{$f_i'$} (d-2-6)
      (d-2-6) edge node{$f_{i+1}$} (d-2-7)
      (d-2-7) edge node{$f_{n-1}$} (d-2-8)
      (d-2-8) edge node{$f_{n}$} (d-2-9)
      
      \downequal{d}{1-1}{2-1}
      \downequal{d}{1-2}{2-2}
      (d-1-3) edge node{$\varphi_3$} (d-2-3)
      (d-1-5) edge node{$\varphi_{i}$} (d-2-5)
      (d-1-6) edge node{$\nu_{i+1}$} (d-2-6)
      (d-1-8) edge node{$\nu_{n}$} (d-2-8)
      (d-1-9) edge node{$\mu$} (d-2-9);
    \end{tikzpicture}
  \end{center}
  Note also that this yields a new element of
  $\todacc{f_n,\dots,f_i+f_i',\dots,f_2,f_1}$ from $\psi$, namely $\psi+\mu$, by
  the commutative diagram
  \begin{center}
    \begin{tikzpicture}
      \diagram{d}{2.5em}{2em}{
        X_1 & X_2 & Y_3 & \cdots & Y_i &[0.5em] Y_{i+1} & \cdots & Y_n & \Sigma X_{1}\\
        X_1 & X_2 & X_3 & \cdots & X_i & X_{i+1} & \cdots & X_n & X_{n+1}.\\
      };
      
      \path[->, auto, font = \scriptsize]
      (d-1-1) edge node{$f_1$} (d-1-2)
      (d-1-2) edge node{$y_2$} (d-1-3)
      (d-1-3) edge node{$y_3$} (d-1-4)
      (d-1-4) edge node{$y_{i-1}$} (d-1-5)
      (d-1-5) edge node{$y_i$} (d-1-6)
      (d-1-6) edge node{$y_{i+1}$} (d-1-7)
      (d-1-7) edge node{$y_{n-1}$} (d-1-8)
      (d-1-8) edge node{$y_{n}$} (d-1-9)
      
      (d-2-1) edge node{$f_1$} (d-2-2)
      (d-2-2) edge node{$f_2$} (d-2-3)
      (d-2-3) edge node{$f_3$} (d-2-4)
      (d-2-4) edge node{$f_{i-1}$} (d-2-5)
      (d-2-5) edge node{$f_i+f_i'$} (d-2-6)
      (d-2-6) edge node{$f_{i+1}$} (d-2-7)
      (d-2-7) edge node{$f_{n-1}$} (d-2-8)
      (d-2-8) edge node{$f_{n}$} (d-2-9)
      
      \downequal{d}{1-1}{2-1}
      \downequal{d}{1-2}{2-2}
      (d-1-3) edge node{$\varphi_3$} (d-2-3)
      (d-1-5) edge node{$\varphi_{i}$} (d-2-5)
      (d-1-6) edge node{$\varphi_{i+1}+\nu_{i+1}$} (d-2-6)
      (d-1-8) edge node{$\varphi_{n}+\nu_{n}$} (d-2-8)
      (d-1-9) edge node{$\psi+\mu$} (d-2-9);
    \end{tikzpicture}
  \end{center}
  Since $\psi'$ and $\mu$ lie in the same Toda bracket, suppose $h_n\colon
  \Sigma X_1\to X_n$ and $h_{n+1}\colon \Sigma X_2\to X_{n+1}$ are morphisms
  such that
  \begin{equation*}
    \psi'-\mu=f_nh_n+h_{n+1}\Sigma f_1.
  \end{equation*}
  Then
  \begin{equation*}
    \psi+\psi'-(\psi+\mu)=f_nh_n+h_{n+1}\Sigma f_1,
  \end{equation*}
  and thus we see that $\psi+\psi'$ differs from an element of the Toda bracket
  $\todacc{f_n,\dots,f_i+f_i',\dots,f_2,f_1}$ by the indeterminacy. Hence, 
  may thus conclude that
  \begin{equation*}
    \todacc{f_n,\dots,f_i+f_i',\dots,f_2,f_1}=\todacc{f_n,\dots,f_i,\dots,f_2,f_1} +
    \todacc{f_n,\dots,f_i',\dots,f_2,f_1}
  \end{equation*}
  for $2\leq i\leq n-1$.

  \textbf{(4)} By \autoref{lem:InvariantIso} and \autoref{lem:MinusSigns}, we
  may assume that the minus sign is in front of the last morphism: $i=n$.
  Elements of $\todacc{f_n, \ldots, f_1}$ are the morphisms $\psi \colon \Sigma
  X_1 \to X_{n+1}$ appearing in \autoref{eq:TodaCC}.
  But that diagram commutes if and only if the following diagram commutes:
  \begin{center}
    \begin{tikzpicture}
      \diagram{d}{2em}{2em}{
        X_1 & X_2 & Y_3 & Y_4 & \cdots & Y_n & \Sigma X_1\\
        X_1 & X_2 & X_3 & X_4 & \cdots & X_n & X_{n + 1}.\\
      };
      
      \path[->, auto, font = \scriptsize]
      (d-1-1) edge node{$f_1$} (d-1-2)
      \downequal{d}{1-1}{2-1}
      (d-1-2) edge (d-1-3)
      \downequal{d}{1-2}{2-2}
      (d-1-3) edge (d-1-4)
              edge node{$\phi_3$} (d-2-3)
      (d-1-4) edge node{$\phi_4$} (d-2-4)
              edge (d-1-5)
      (d-1-5) edge (d-1-6)
      (d-1-6) edge (d-1-7)
              edge node{$\phi_{n}$} (d-2-6)
      (d-1-7) edge node{$-\psi$} (d-2-7)
      
      (d-2-1) edge node{$f_1$} (d-2-2)
      (d-2-2) edge node{$f_2$} (d-2-3)
      (d-2-3) edge node{$f_3$} (d-2-4)
      (d-2-4) edge node{$f_4$} (d-2-5)
      (d-2-5) edge node{$f_{n-1}$} (d-2-6)
      (d-2-6) edge node{$-f_n$} (d-2-7);
    \end{tikzpicture}
  \end{center}    
  By \autoref{lem:AnyExtension}, we may use the same $n$-angle extension of
  $f_1$ in the top row to compute $\todacc{-f_n, \ldots, f_2, f_1}$, which
  proves the claim.
\end{proof}

\begin{remark}
  First, note that the proof of \ref{item:Minus} did not use the condition
  $f_{i+1} f_i = 0$.
    
  Second, for a fixed position $i$, we note that in proving
  \autoref{prop:subadd}, all we actually needed to prove the relevant inclusions
  for the $f_i$ and $f_i'$ were the equations
  \begin{equation}\label{eq:relations}
    f_{j+1}f_j=0,\quad f_{j+1}'f_j=0,\quad\text{and}\quad f_{j+1}f_j'=0
  \end{equation}
  for $i-1\leq j\leq n-1$. This entails that the inclusion
  \begin{equation*}
    \todacc{f_n,\dots,f_i+f_i',\dots,f_2,f_1}\subseteq
    \todacc{f_n,\dots,f_i,\dots,f_2,f_1} + \todacc{f_n,\dots,f_i',\dots,f_2,f_1}
  \end{equation*}
  holds under this looser restriction and, dually, 
  \begin{equation*}
    \todaff{f_n,\dots,f_i+f_i',\dots,f_2,f_1}\subseteq
    \todaff{f_n,\dots,f_i,\dots,f_2,f_1} + \todaff{f_n,\dots,f_i',\dots,f_2,f_1}
  \end{equation*}
  if the equations \eqref{eq:relations} are satisfied for $1\leq j\leq i$. In
  the case of the fiber-cofiber Toda bracket, if all three relevant brackets are
  non-empty, we are guaranteed the equations \eqref{eq:relations} and we may
  apply that they coincide with the iterated (co)fiber Toda brackets.
\end{remark}

\begin{proposition}\label{prop:submult}
  Let $\C$ be a pre-$n$-angulated category and consider the diagram
  \begin{center}
    \begin{tikzpicture}
      \diagram{d}{2em}{2em}{
        X_1 & X_2 & X_3 & \cdots & X_n & X_{n+1} & X_{n+2}.\\
      };
      
      \path[->, auto, font = \scriptsize]
      (d-1-1) edge node{$f_1$} (d-1-2)
      (d-1-2) edge node{$f_2$} (d-1-3)
      (d-1-3) edge node{$f_3$} (d-1-4)
      (d-1-4) edge node{$f_{n-1}$} (d-1-5)
      (d-1-5) edge node{$f_n$} (d-1-6)
      (d-1-6) edge node{$f_{n+1}$} (d-1-7);
    \end{tikzpicture}
  \end{center}
  Then the following statements hold:
  \begin{enumerate}
  \item $f_{n+1}\todacc{f_n,\dots,f_2,f_1}\subseteq
    \todacc{f_{n+1}f_n,\dots,f_2,f_1}$.
  \item $\todacc{f_{n+1},\dots,f_3,f_2}\Sigma f_1\subseteq
    \todacc{f_{n+1},\dots,f_3,f_2f_1}$.
  \end{enumerate}
  (The same statements hold for the fiber-cofiber Toda bracket and the iterated
  fiber Toda bracket.)
  \begin{enumerate}[resume, label=(\arabic*), ref={Proposition
      \theproposition.(\arabic*)}]
  \item \label{prop-6-5-3} $\todacc{f_{n+1},\dots,f_3,f_2f_1}\subseteq
    \todacc{f_{n+1},\dots,f_3f_2,f_1}$.
  \item \label{prop-6-5-4} For $3\leq i\leq n$ we have
    \begin{equation*}
      \todacc{f_{n+1},\dots,f_{i+1}f_i,f_{i-1},\dots,f_2,f_1}\subseteq
      \todacc{f_{n+1},\dots,f_{i+1},f_if_{i-1},\dots,f_2,f_1}.
    \end{equation*}
  \end{enumerate}
  Dually, we have
  \begin{enumerate}[label=(\arabic*'), ref={Proposition
      \theproposition.(\arabic*')}]
    \setcounter{enumi}{2}
  \item \label{prop-6-5-3-prime} $\todaff{f_{n+1}f_n,f_{n-1},\dots,f_1}\subseteq
    \todaff{f_{n+1},f_nf_{n-1}\dots,f_1}$.
  \item \label{prop-6-5-4-prime} For $3\leq i\leq n$ we have
    \begin{equation*}
      \todaff{f_{n+1},\dots,f_{i+1},f_if_{i-1},\dots,f_2,f_1}\subseteq
      \todaff{f_{n+1},\dots,f_{i+1}f_i,f_{i-1},\dots,f_2,f_1}.
    \end{equation*}
  \end{enumerate}
\end{proposition}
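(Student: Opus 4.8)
The plan is to prove every inclusion by manipulating the explicit iterated cofiber and iterated fiber diagrams of \autoref{def:TodaNang}, using \autoref{lem:AnyExtension} to choose convenient $n$-angle extensions and the morphism axiom \ref{N3} (up to rotation via \ref{N2}) to compare an extension of a morphism with an extension of a composite. Two reductions dispose of the auxiliary assertions. For the fiber-cofiber versions of (1) and (2): if the source bracket is empty the inclusion is vacuous, while if it is non-empty then \autoref{lem:nonempty_fcbracket} forces all consecutive two-fold composites of that sequence to vanish, hence also those of the target sequence (for instance $(f_{n+1}f_n)f_{n-1} = f_{n+1}(f_nf_{n-1}) = 0$), so \autoref{thm:main} identifies all three brackets on both sides and we are reduced to the iterated cofiber statement. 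For the dual assertions (3') and (4'): these are (3) and (4) applied in the opposite pre-$n$-angulated category, under which the iterated cofiber and iterated fiber brackets interchange. Thus it suffices to prove (1)--(4) for $\todacc$ together with the $\todaff$-analogues of (1) and (2).

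Part (1) and the $\todaff$-analogue of (2) are the easy edge cases, since the morphism being modified sits at the end of the sequence that is free for that bracket type: the source of the chosen $n$-angle extension in the cofiber case, its target in the fiber case. For (1), given $\psi \in \todacc{f_n,\dots,f_2,f_1}$ witnessed by \autoref{eq:TodaCC}, one post-composes the entire bottom row with $f_{n+1}$: the top row (still an $n$-angle extension of $f_1$) and the maps $\phi_3,\dots,\phi_n$ are unchanged, only the last square changes, and it commutes since $(f_{n+1}\psi)y_n = f_{n+1}(f_n\phi_n)$. Dually, for $\todaff{f_{n+1},\dots,f_3,f_2}\,\Sigma f_1 \subseteq \todaff{f_{n+1},\dots,f_3,f_2f_1}$ one keeps the same $n$-angle extension of $f_{n+1}$ and the same maps $\gamma_2,\dots,\gamma_{n-1}$ and replaces $\delta$ by $\delta f_1$; only the first square changes, and it commutes because $\gamma_2(f_2f_1) = (\gamma_2 f_2)f_1 = w_1\delta f_1$.

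Part (2) for $\todacc$, the $\todaff$-analogue of (1), and part (3) all amount to replacing the first morphism of a bracket by a composite, and are handled by a single device. One chooses arbitrary $n$-angle extensions $C_\bullet$ of the composite and $D_\bullet$ of the relevant factor (legitimate by \autoref{lem:AnyExtension}), applies \ref{N3} to the evident commuting square whose two prescribed vertical entries are an identity and one of the $f_j$ to obtain a morphism of $n$-angles $\theta$ between $C_\bullet$ and $D_\bullet$, and then splices $\theta$ onto the given Toda diagram, taking the new vertical maps to be the composites of the old ones with the components of $\theta$. The one point to observe is that the bottom rows of Toda diagrams need not be $n$-angles: after splicing, one re-labels the first bottom object and makes the first vertical an identity at the cost of turning the first bottom morphism into the composite, so the first square commutes trivially, every intermediate square is inherited verbatim, and the final square follows by combining the commuting final squares of $\theta$ and of the original diagram.

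Part (4), and dually (4'), is the genuine shuffle move and is the cleanest, because both brackets share the first morphism $f_1$ and so by \autoref{lem:AnyExtension} may be computed with one fixed $n$-angle extension $Y_\bullet$ of $f_1$. Starting from $\psi$ in the left-hand bracket with vertical maps $\phi_k$, one keeps $\phi_k$ at every position except $k=i$, and at position $i$ replaces $\phi_i\colon Y_i\to X_i$ by $f_i\phi_i\colon Y_i\to X_{i+1}$; only the two squares adjacent to position $i$ are affected, and each is a one-line check: on the left $(f_i\phi_i)y_{i-1}=f_i(f_{i-1}\phi_{i-1})=(f_if_{i-1})\phi_{i-1}$, on the right $\phi_{i+1}y_i=(f_{i+1}f_i)\phi_i=f_{i+1}(f_i\phi_i)$. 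I expect the main obstacle throughout to be bookkeeping rather than any real difficulty: fusing two consecutive morphisms deletes an object from the sequence, so the vertical maps of the source and target brackets are indexed over slightly shifted object ranges, and keeping these shifts straight across all of the cases (and their duals) is the only genuine work. In particular, no vanishing hypothesis $f_{j+1}f_j=0$ is needed---or assumed---for the cofiber and fiber statements.
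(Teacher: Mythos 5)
Your proposal is correct and follows essentially the same route as the paper: the direct diagram argument for (1) and the fiber analogue of (2), an \ref{N3} fill-in between an extension of the composite and an extension of the factor spliced onto the given diagram for (2), (3) and the fiber analogue of (1), the replacement of $\phi_i$ by $f_i\phi_i$ for (4), the emptiness-or-\autoref{thm:main} reduction for the fiber-cofiber claims, and duality for the primed statements. The only quibble is cosmetic: in case (3) the re-labelling happens at the second, not the first, bottom position (two bottom arrows change), but the equations you verify are exactly the right ones.
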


\begin{proof}
  We prove the statements \textbf{(1)} and \textbf{(2)} for the iterated cofiber
  Toda bracket; the iterated fiber Toda bracket is dual. In the case of the
  fiber-cofiber Toda bracket, if the left side of the inclusion is non-empty, so
  is the right, and therefore we may apply \autoref{thm:main} and reduce to the
  case of the iterated (co)fiber case.

  \textbf{(1)} Given a morphism $\psi\in\todacc{f_{n},\dots,f_2,f_1}$
  represented by a diagram with vertical morphisms given by $\varphi_i$ for
  $3\leq i\leq n$, we may exhibit
  $f_{n+1}\psi\in\todacc{f_{n+1}f_n,\dots,f_2,f_1}$ by the diagram
  \begin{center}
    \begin{tikzpicture}    
      \diagram{d}{2em}{3em}{
        X_1 & X_2 & Y_3 & \cdots & Y_n & \Sigma X_1\\
        X_1 & X_2 & X_3 & \cdots & X_{n} & X_{n+2} \\ 
      };
      
      \path[->, font = \scriptsize, auto]
      (d-1-1) edge node{$f_1$} (d-1-2)
      (d-1-2) edge node{$y_2$} (d-1-3)
      (d-1-3) edge node{$y_3$} (d-1-4)
      (d-1-4) edge node{$y_{n-1}$} (d-1-5)
      (d-1-5) edge node{$y_n$} (d-1-6)
      
      (d-2-1) edge node{$f_1$} (d-2-2)
      (d-2-2) edge node{$f_2$} (d-2-3)
      (d-2-3) edge node{$f_3$} (d-2-4)
      (d-2-4) edge node{$f_{n-1}$} (d-2-5)
      (d-2-5) edge node{$f_{n+1}f_n$} (d-2-6)
      
      \downequal{d}{1-1}{2-1}
      \downequal{d}{1-2}{2-2}
      (d-1-3) edge node{$\varphi_3$} (d-2-3)
      (d-1-5) edge node{$\varphi_n$} (d-2-5)
      (d-1-6) edge node{$f_{n+1}\psi$} (d-2-6);
    \end{tikzpicture}
  \end{center}
  since $\psi y_n=f_n\varphi_n$. 
  
  \textbf{(2)} Consider an element $\psi\in\todacc{f_{n+1},\dots,f_3,f_2}$
  represented by a diagram with vertical morphisms given by $\varphi_i$ for
  $3\leq i\leq n$. We are given an extension $Z_\bullet$ of $f_2$ and we choose
  an extension $W_\bullet$ for $f_2f_1$. Then we may apply the axiom \ref{N3} to
  complete the partial morphism $(f_1,1_{X_3})\colon W_\bullet\to Z_\bullet$,
  and thus we may build the diagram
  \begin{center}
    \begin{tikzpicture}    
      \diagram{d}{2em}{3em}{
        X_1 & & X_3 & W_3 & \cdots & W_n & \Sigma X_1\\
        & X_2 & X_3 & Z_3 & \cdots & Z_n & \Sigma X_2\\
        X_1 & X_2 & X_3 &  X_4 & \cdots & X_{n+1} & X_{n+2}\\
      };
      
      \path[->, font = \scriptsize, auto]
      (d-1-1) edge node{$f_2f_1$} (d-1-3)
      (d-1-3) edge node{$w_2$} (d-1-4)
      (d-1-4) edge node{$w_3$} (d-1-5)
      (d-1-5) edge node{$w_{n-1}$} (d-1-6)
      (d-1-6) edge node{$w_n$} (d-1-7)
      
      (d-2-2) edge node{$f_2$} (d-2-3)
      (d-2-3) edge node{$z_2$} (d-2-4)
      (d-2-4) edge node{$z_3$} (d-2-5)
      (d-2-5) edge node{$z_{n-1}$} (d-2-6)
      (d-2-6) edge node{$z_n$} (d-2-7)
      
      (d-3-1) edge node{$f_1$} (d-3-2)
      (d-3-2) edge node{$f_2$} (d-3-3)
      (d-3-3) edge node{$f_3$} (d-3-4)
      (d-3-4) edge node{$f_4$} (d-3-5)
      (d-3-5) edge node{$f_n$} (d-3-6)
      (d-3-6) edge node{$f_{n+1}$} (d-3-7)
      
      \downequal{d}{1-1}{3-1}
      (d-1-1) edge node{$f_1$} (d-2-2)
      \downequal{d}{1-3}{2-3}
      (d-1-4) edge[densely dashed] node{$\theta_3$} (d-2-4)
      (d-1-6) edge[densely dashed] node{$\theta_n$} (d-2-6)
      (d-1-7) edge node{$\Sigma f_1$} (d-2-7)
      \downequal{d}{2-2}{3-2}
      \downequal{d}{2-3}{3-3}
      (d-2-4) edge node{$\varphi_3$} (d-3-4)
      (d-2-6) edge node{$\varphi_n$} (d-3-6)
      (d-2-7) edge node{$\psi$} (d-3-7);
    \end{tikzpicture}
  \end{center}
  which, when composed vertically, shows $\psi\Sigma
  f_1\in\todacc{f_{n+1},\dots,f_3,f_2f_1}$.

  \textbf{(3)} Suppose we have extensions $V_\bullet$ of $f_2f_1$ and
  $U_\bullet$ of $f_1$. Given an element
  $\psi\in\todacc{f_{n+1},\cdots,f_3,f_2f_1}$ with vertical morphisms
  $\varphi_k$ for $3\leq k\leq n$, we may express it as an element of
  $\todacc{f_{n+1},\dots,f_3f_2,f_1}$ by applying \ref{N3} to the partial
  morphism
  \begin{center}
    \begin{tikzpicture}
      \diagram{d}{1.5em}{1.5em}{
        (1_{X_1},1_{X_2})\colon U_\bullet & V_\bullet\\
      };
      
      \path[->]
      (d-1-1) edge (d-1-2);
    \end{tikzpicture}
  \end{center}
  and composing through the following diagram vertically:
  \begin{center}
    \begin{tikzpicture}    
      \diagram{d}{2em}{3em}{
        X_1 & X_2 & & U_3 &\cdots & U_n & \Sigma X_1\\
        X_1 & X_2 & X_3 & V_3 &\cdots & V_{n} & \Sigma X_1\\
        X_1 & X_2 & X_3 & X_4 & \cdots & X_{n+1} & X_{n+2}.\\
      };
      
      \path[->, font = \scriptsize, auto]
      (d-1-1) edge node{$f_1$} (d-1-2)
      (d-1-2) edge node{$u_2$} (d-1-4)
      (d-1-4) edge node{$u_3$} (d-1-5)
      (d-1-5) edge node{$u_{n-1}$} (d-1-6)
      (d-1-6) edge node{$u_n$} (d-1-7)
      
      (d-2-1) edge node{$f_1$} (d-2-2)
      (d-2-2) edge node{$f_2$} (d-2-3)
      (d-2-3) edge node{$v_2$} (d-2-4)
      (d-2-4) edge node{$v_3$} (d-2-5)
      (d-2-5) edge node{$v_{n-1}$} (d-2-6)
      (d-2-6) edge node{$v_n$} (d-2-7)
      
      (d-3-1) edge node{$f_1$} (d-3-2)
      (d-3-2) edge node{$f_2$} (d-3-3)
      (d-3-3) edge node{$f_3$} (d-3-4)
      (d-3-4) edge node{$f_4$} (d-3-5)
      (d-3-5) edge node{$f_n$} (d-3-6)
      (d-3-6) edge node{$f_{n+1}$} (d-3-7)
      
      \downequal{d}{1-1}{2-1}
      (d-1-4) edge[densely dashed] node{$\theta_3$} (d-2-4)
      (d-1-6) edge[densely dashed] node{$\theta_n$} (d-2-6)
      \downequal{d}{1-7}{2-7}
      
      \downequal{d}{2-1}{3-1}
      \downequal{d}{2-3}{3-3}
      (d-2-4) edge node{$\varphi_3$} (d-3-4)
      (d-2-6) edge node{$\varphi_{n}$} (d-3-6)
      (d-2-7) edge node{$\psi$} (d-3-7);
      
      \clip (d-2-3)[xshift=-20pt, yshift=-2pt] circle (37pt);
      \draw[line width=2pt, color=white] (d-2-1) circle (61pt);
      \draw (d-2-1) circle (60pt);
      \draw (d-2-1) circle (62pt);
    \end{tikzpicture}
  \end{center}
    
  \textbf{(4)} Let $3\leq i\leq n$ and consider the commutative diagram
  \begin{center}
    \begin{tikzpicture}
      \diagram{d}{2.5em}{2em}{
        X_1 & X_2 &  \cdots & Y_{i-1} &  Y_i &  & Y_{i+1} & \cdots & \Sigma X_{1}\\
        X_1 & X_2 & \cdots & X_{i-1} & X_i & X_{i+1} & X_{i+2} & \cdots & X_{n+1}.\\
      };
      
      \path[->, auto, font = \scriptsize]
      (d-1-1) edge node{$f_1$} (d-1-2)
      (d-1-2) edge node{$y_2$} (d-1-3)
      (d-1-3) edge node{$y_{i-2}$} (d-1-4)
      (d-1-4) edge node{$y_{i-1}$} (d-1-5)
      (d-1-5) edge node{$y_{i}$} (d-1-7)
      (d-1-7) edge node{$y_{i+1}$} (d-1-8)
      (d-1-8) edge node{$y_{n}$} (d-1-9)
      
      (d-2-1) edge node{$f_1$} (d-2-2)
      (d-2-2) edge node{$f_2$} (d-2-3)
      (d-2-3) edge node{$f_{i-2}$} (d-2-4)
      (d-2-4) edge node{$f_{i-1}$} (d-2-5)
      (d-2-5) edge node{$f_{i}$} (d-2-6)
      (d-2-6) edge node{$f_{i+1}$} (d-2-7)
      (d-2-7) edge node{$f_{i+2}$} (d-2-8)
      (d-2-8) edge node{$f_{n}$} (d-2-9)
      
      \downequal{d}{1-1}{2-1}
      \downequal{d}{1-2}{2-2}
      (d-1-4) edge node{$\varphi_{i-1}$} (d-2-4)
      (d-1-5) edge[densely dashed] node{$\varphi_{i}$} (d-2-5)
      (d-1-7) edge node{$\varphi_{i+1}$} (d-2-7)
      (d-1-9) edge node{$\psi$} (d-2-9)
      (d-1-5) edge[densely dotted] node{$f_i\varphi_i$} (d-2-6);
    \end{tikzpicture}
  \end{center}
  The diagram without the dotted morphism represents an element 
  \begin{equation*}
    \psi\in\todacc{f_{n+1},\dots,f_{i+1}f_i,f_{i-1},\dots,f_2,f_1}
  \end{equation*}
  while the diagram without the dashed morphism represents the same morphism but
  as an element of $\todacc{f_{n+1},\dots,f_{i+1},f_if_{i-1},\dots,f_2,f_1}$ and
  the claim follows.
\end{proof}

\begin{proposition}[Submultiplicativity]\label{prop:juggling_fc}
  Let $\C$ be a pre-$n$-angulated category and consider the diagram
  \begin{center}
    \begin{tikzpicture}
      \diagram{d}{2em}{2em}{
        X_1 & X_2 & X_3 & \cdots & X_n & X_{n+1} & X_{n+2}.\\
      };
      
      \path[->, auto, font = \scriptsize]
      (d-1-1) edge node{$f_1$} (d-1-2)
      (d-1-2) edge node{$f_2$} (d-1-3)
      (d-1-3) edge node{$f_3$} (d-1-4)
      (d-1-4) edge node{$f_{n-1}$} (d-1-5)
      (d-1-5) edge node{$f_n$} (d-1-6)
      (d-1-6) edge node{$f_{n+1}$} (d-1-7);
    \end{tikzpicture}
  \end{center}
  such that $f_{i+1}f_i=0$ for $1\leq i\leq n$. Then the following statements
  hold:
  \begin{enumerate}[label=(\arabic*), ref={Proposition
      \theproposition.(\arabic*)}]
  \item $f_{n+1}\toda{f_n,\dots,f_2,f_1}\subseteq
    \toda{f_{n+1}f_n,\dots,f_2,f_1}$.
  \item $\toda{f_{n+1},\dots,f_3,f_2}\Sigma f_1\subseteq
    \toda{f_{n+1},\dots,f_3,f_2f_1}$.
  \item $\toda{f_{n+1},\dots,f_3,f_2f_1}\subseteq
    \toda{f_{n+1},\dots,f_3f_2,f_1}$.
  \item $\toda{f_{n+1}f_n,f_{n-1},\dots,f_2,f_1}\subseteq
    \toda{f_{n+1},f_nf_{n-1},\dots,f_2,f_1}$.
  \item \label{prop-6-6-5} For all $3\leq i\leq n-1$ we have the identities
    \begin{equation*}
      \toda{f_{n+1},\dots,f_{i+1}f_i,f_{i-1},\dots,f_2,f_1} =
      \toda{f_{n+1},\dots,f_{i+1},f_if_{i-1},\dots,f_2,f_1}.
    \end{equation*}
  \item
    $f_{n+1}\toda{f_{n},\dots,f_2,f_1}=\toda{f_{n+1},\dots,f_3,f_2}(-1)^n\Sigma
    f_1.$
  \end{enumerate}
  Moreover, if the condition of $f_{i + 1} f_i = 0$ is not satisfied, the
  statements (1)--(5) all hold for the fiber-cofiber Toda bracket. Statement (6)
  holds for the fiber-cofiber Toda bracket if the two brackets in question are
  non-empty.
\end{proposition}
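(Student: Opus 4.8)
The plan is to derive statements (1)--(5) from \autoref{prop:submult} by way of \autoref{thm:main}, and to prove the new identity~(6) by a coset argument together with the intermediate description of \autoref{sec:intermediate}.

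Under the standing hypothesis $f_{i+1}f_i=0$ for $1\le i\le n$, replacing two adjacent morphisms by their composite still leaves every $2$-fold composite of the resulting shorter diagram equal to zero; hence \autoref{thm:main} applies to every diagram appearing in (1)--(5) and identifies its iterated cofiber, iterated fiber and fiber-cofiber brackets with $\toda{-}$. With this in hand, (1)--(3) are \autoref{prop:submult}(1)--(3), (4) is the content of \ref{prop-6-5-3-prime}, and (5) follows by combining \ref{prop-6-5-4} with its dual \ref{prop-6-5-4-prime}, which sandwich the two brackets into one another and so force equality. For the general case, where $f_{i+1}f_i=0$ need not hold, (1)--(5) hold for $\todafc{-}$ by the same mechanism: either the left-hand fiber-cofiber bracket is empty and the inclusion is vacuous, or it is non-empty, in which case \autoref{lem:nonempty_fcbracket} forces enough $2$-fold composites (including those of the right-hand diagram) to vanish that \autoref{thm:main} reduces both sides to their iterated (co)fiber counterparts.

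For (6) I would argue as in \autoref{sec:toda}. By \autoref{prop:brackets_are_cosets}, $\toda{f_n,\dots,f_1}$ is a coset of $G=(f_n)_\ast\C(\Sigma X_1,X_n)+(\Sigma f_1)^\ast\C(\Sigma X_2,X_{n+1})$ and $\toda{f_{n+1},\dots,f_3,f_2}$ is a coset of $G'=(f_{n+1})_\ast\C(\Sigma X_2,X_{n+1})+(\Sigma f_2)^\ast\C(\Sigma X_3,X_{n+2})$. The image of the first coset under $(f_{n+1})_\ast$ is a coset of $(f_{n+1})_\ast G$; its first summand vanishes because $f_{n+1}f_n=0$, leaving $H\coloneq(\Sigma f_1)^\ast(f_{n+1})_\ast\C(\Sigma X_2,X_{n+1})$. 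Likewise the image of the second coset under $(-)\circ(-1)^n\Sigma f_1$ is a coset of $G'\circ(-1)^n\Sigma f_1$, whose second summand precomposes to zero because $f_2f_1=0$, leaving the same $H$. Thus both sides of (6) are cosets of the subgroup $H$ of $\C(\Sigma X_1,X_{n+2})$, and it is enough to establish the inclusion $f_{n+1}\toda{f_n,\dots,f_1}\subseteq\toda{f_{n+1},\dots,f_3,f_2}(-1)^n\Sigma f_1$.

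Take $\psi\in\toda{f_n,\dots,f_1}$; by \autoref{thm:all_coincide} it has a $[2]$-intermediate representative $\psi=\Sigma(\beta_1\alpha_1)$ built from an $n$-angle $Z_\bullet$ whose second morphism is $f_2$, so that $Z_2=X_2$, $Z_3=X_3$, $z_1\alpha_1=f_1$, and the vertical maps below $Z_4,\dots,Z_n,\Sigma Z_1$ are $\beta_4,\dots,\beta_n,\Sigma\beta_1$, satisfying $\beta_4 z_3=f_3$, $\beta_{k+1}z_k=f_k\beta_k$ and $(\Sigma\beta_1)z_n=f_n\beta_n$. Its left rotation $Z_\bullet[1]$ is an $n$-angle by \ref{N2} with first morphism $f_2$, hence by \autoref{lem:AnyExtension} an admissible $n$-angle extension for computing $\todacc{f_{n+1},\dots,f_3,f_2}$; I reuse $\beta_4,\dots,\beta_n,\Sigma\beta_1$ as the vertical fill-ins over $Z_\bullet[1]$, the commutativity squares being precisely those just listed. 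The remaining vertical map $\psi'\colon\Sigma X_2\to X_{n+2}$ exists because $(f_{n+1}\Sigma\beta_1)z_n=f_{n+1}f_n\beta_n=0$, so by the weak-cokernel property of the $n$-angle $Z_\bullet[1]$ there is $\psi'$ with $\psi'\,(-1)^n\Sigma z_1=f_{n+1}\Sigma\beta_1$, and then $\psi'\in\todacc{f_{n+1},\dots,f_3,f_2}=\toda{f_{n+1},\dots,f_3,f_2}$. Since $\Sigma f_1=(\Sigma z_1)(\Sigma\alpha_1)$,
\begin{equation*}
  \psi'\,(-1)^n\Sigma f_1=\bigl(\psi'\,(-1)^n\Sigma z_1\bigr)(\Sigma\alpha_1)=f_{n+1}(\Sigma\beta_1)(\Sigma\alpha_1)=f_{n+1}\Sigma(\beta_1\alpha_1)=f_{n+1}\psi,
\end{equation*}
which yields the inclusion, and the coset comparison promotes it to the equality in (6). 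The fiber-cofiber form of (6) reduces to this case: if the two brackets are non-empty, \autoref{lem:nonempty_fcbracket} forces $f_{i+1}f_i=0$ for all $1\le i\le n$. I expect the main obstacle to be the bookkeeping in this construction --- verifying that $Z_\bullet[1]$ is a genuine extension for the shifted diagram, that the recycled maps occupy the correct squares of the iterated cofiber diagram (in particular at the ``$\Sigma\beta_1$'' corner, where the rotation sign $(-1)^n$ enters), and that this sign is tracked consistently to the final identity.
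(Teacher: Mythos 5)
Your handling of (1)--(5) is essentially the paper's: both arguments reduce to \autoref{prop:submult} via \autoref{thm:main}, with the fiber-cofiber case dealt with by emptiness considerations. For (6) you take a genuinely different route. The paper proves both inclusions directly at the fiber-cofiber level by enlarging the staircase of \autoref{figure:fcbracket} by one row and one column (\autoref{fig:prop-6-6-(6)}): the same solid staircase read with the dotted left-hand column gives $\psi\in\todafc{f_n,\dots,f_2,f_1}$, read with the dashed right-hand column gives $\psi'\in\todafc{f_{n+1},\dots,f_3,f_2}$, and the blue corner lets one read off $f_{n+1}\psi=\psi'(-1)^{n}\Sigma f_1$, with no appeal to the indeterminacy. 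You instead note that both sides of (6) are cosets of $H=(\Sigma f_1)^\ast(f_{n+1})_\ast\C(\Sigma X_2,X_{n+1})$ (the hypotheses $f_{n+1}f_n=0$ and $f_2f_1=0$ kill the other summands of the images of the two indeterminacy subgroups), so a single inclusion suffices; you then realize $\psi=\Sigma(\beta_1\alpha_1)$ via a $[2]$-intermediate presentation over an $n$-angle $Z_\bullet$ with second morphism $f_2$ (\autoref{thm:all_coincide}), feed the left rotation $Z_\bullet[1]$, whose last morphism is $(-1)^n\Sigma z_1$, into \autoref{lem:AnyExtension} as the extension computing $\todacc{f_{n+1},\dots,f_3,f_2}$, recycle $\beta_4,\dots,\beta_n,\Sigma\beta_1$ as fill-ins, and obtain $\psi'$ from the weak cokernel property with $\psi'(-1)^n\Sigma z_1=f_{n+1}\Sigma\beta_1$, whence $\psi'(-1)^n\Sigma f_1=f_{n+1}\psi$. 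I checked the required squares and the sign bookkeeping; this is a correct proof. What the paper's diagram buys is a two-sided construction carried out directly for $\todafc{-}$ (hence the non-emptiness formulation of the fiber-cofiber case of (6) is immediate); what your route buys is brevity and a systematic use of the coset and intermediate-bracket machinery of Sections 4 and 5, in the same spirit as the paper's proof of \autoref{thm:main}.

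One caveat, which you share with the paper's own proof of the ``moreover'' clause: for the fiber-cofiber form of (5), non-emptiness of one side does not force all consecutive composites of the other diagram to vanish (the left-hand diagram only sees $f_{i+2}f_{i+1}f_i$ and $f_{i-1}f_{i-2}$, while the right-hand one sees $f_{i+2}f_{i+1}$ and $f_if_{i-1}f_{i-2}$), so the emptiness reduction there is looser than stated; since the paper argues in exactly the same way, this is not a gap you have introduced.
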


\begin{proof}
  We will argue that the statements hold for the fiber-cofiber Toda bracket
  regardless of the condition $f_{i+1}f_i=0$, since this will imply the
  statements for the Toda bracket when the condition is satisfied.

  \textbf{(1)} and \textbf{(2)} were proven in \autoref{prop:submult} for all
  three Toda brackets. We include them here for the sake of
  completion.

  In the case of the fiber-cofiber Toda bracket, \textbf{(3)} and \textbf{(4)}
  are a consequence of \ref{prop-6-5-3} and \ref{prop-6-5-3-prime}: If either
  $f_{i+1}f_i\neq 0$ for some $i$ or $f_3f_2f_1\neq 0$ (resp.\
  $f_{n+1}f_nf_{n-1}\neq 0$) then both sides of the inclusion are empty and the
  statement holds. If not, then the Toda brackets coincide and we may appeal to
  \autoref{prop:submult}.\sloppy

  \textbf{(5)} follows from \autoref{thm:main}, \ref{prop-6-5-4} and
  \ref{prop-6-5-4-prime}. Again, if either $f_{j+1}f_j\neq 0$ for some $j$ or if
  $f_{i+1}f_if_{i-1}\neq 0$ then both sides are empty and the equality holds.

  To see that \textbf{(6)} holds, first note that if both the brackets in
  question are empty, the equality is trivial. If they are both non-empty,
  consider the commutative diagram in \autoref{fig:prop-6-6-(6)}.
  
  \begin{figure}[htbp]
    \centering
    \begin{tikzpicture}
      \smalldiagram{d}{2em}{2em}{
        X_1 &[-0.1em] X_2 &[-0.1em] &[-0.1em] & & &\Sigma X_1 &[0.75em] \Sigma X_2\\
        Z^{2}_{1} & X_2 & X_3 & Z^{2}_4 & \cdots & Z^{2}_{n} & \Sigma Z^{2}_{1} & \Sigma
        X_2\\
        Z^{3}_{1} & Z^{3}_{2} & X_3 & X_4 & \cdots & Z^{3}_{n} & \Sigma Z^{3}_{1} & \Sigma
        Z^3_{2}\\
        Z^{4}_{1} & Z^{4}_{2} & Z^{4}_{3} & X_4 & \cdots & Z^{4}_{n} & \Sigma Z^{4}_{1} &
        \Sigma Z^4_{2}\\
        \vdots & \vdots & \vdots & \vdots & & \vdots & \vdots & \vdots \\
        Z^{{n-1}}_1 & Z^{{n-1}}_2 & Z^{{n-1}}_3 & Z^{{n-1}}_4 & \cdots & X_n & \Sigma
        Z^{{n-1}}_1 & \Sigma Z^{n-1}_2\\
        & Z^n_2 & Z^{{n}}_3 & Z^{{n}}_4 & \cdots & X_n & X_{n + 1} & \Sigma Z^n_2\\
        &&&&&& X_{n+1} & X_{n+2}.\\
      };
            
      \path[->, auto, font = \scriptsize]
      (d-1-1) edge[densely dotted] node{$f_1$} (d-1-2)
              edge[densely dotted] node{$\beta_{1}^1$} (d-2-1)
      ([xshift=-0.1em] d-1-2.south) edge[-,densely dotted] ([xshift=-0.1em]d-2-2.north)
      ([xshift=0.1em] d-1-2.south) edge[-,densely dotted] ([xshift=0.1em]d-2-2.north)
      (d-1-7) edge[blue] node{$(-1)^n\Sigma f_1$} (d-1-8)
              edge[densely dotted] node{$\Sigma\beta_{1}^1$} (d-2-7)
      ([xshift=-0.1em] d-1-8.south) edge[-,blue] ([xshift=-0.1em]d-2-8.north)
      ([xshift=0.1em] d-1-8.south) edge[-,blue] ([xshift=0.1em]d-2-8.north)
      
      (d-2-1) edge[densely dotted] (d-2-2)
              edge[densely dotted] node{$\beta_{1}^2$} (d-3-1)
      (d-2-2) edge node{$f_2$} (d-2-3)
              edge node{$\beta_{2}^2$} (d-3-2)
      (d-2-3) edge[densely dotted] (d-2-4)
      \downequal{d}{2-3}{3-3}
      (d-2-4) edge[densely dotted] (d-2-5)
              edge[densely dotted] node{$\beta_4^2$} (d-3-4)
      (d-2-5) edge[densely dotted] (d-2-6)
      (d-2-6) edge[densely dotted] (d-2-7)
              edge[densely dotted] node{$\beta_{n}^2$} (d-3-6)
      (d-2-7) edge[blue] (d-2-8)
              edge[densely dotted] node{$\Sigma \beta_{1}^2$} (d-3-7)
      (d-2-8) edge[densely dashed] node{$\Sigma\beta^2_{2}$} (d-3-8)
      
      (d-3-1) edge[densely dotted] (d-3-2)
              edge[densely dotted] node{$\beta_{1}^3$} (d-4-1)
      (d-3-2) edge (d-3-3)
              edge node{$\beta_{2}^3$} (d-4-2)
      (d-3-3) edge node{$f_3$} (d-3-4)
              edge node{$\beta_{3}^3$} (d-4-3)
      (d-3-4) edge (d-3-5)
      \downequal{d}{3-4}{4-4}
      (d-3-5) edge (d-3-6)
      (d-3-6) edge (d-3-7)
              edge node{$\beta_{n}^3$} (d-4-6)
      (d-3-7) edge node{$\Sigma\beta_{1}^3$} (d-4-7)
      (d-3-7) edge[densely dashed] node{} (d-3-8)
      (d-3-8) edge[densely dashed] node{$\Sigma\beta^3_{2}$} (d-4-8)
      
      (d-4-1) edge[densely dotted] (d-4-2)
              edge[densely dotted] node{$\beta_{1}^4$} (d-5-1)
      (d-4-2) edge (d-4-3)
              edge node{$\beta_{2}^4$} (d-5-2)
      (d-4-3) edge (d-4-4)
              edge node{$\beta_{3}^4$} (d-5-3)
      (d-4-4) edge node{$f_4$} (d-4-5)
              edge node{$\beta_{4}^4$} (d-5-4)
      (d-4-5) edge (d-4-6)
      (d-4-6) edge (d-4-7)
              edge node{$\beta_{n}^4$} (d-5-6)
      (d-4-7) edge node{$\Sigma\beta_{1}^4$} (d-5-7)
              edge[densely dashed] (d-4-8)
      (d-4-8) edge[densely dashed] node{$\Sigma\beta^4_{2}$} (d-5-8)
      
      (d-5-1) edge[densely dotted] node{$\beta_{1}^{n-2}$} (d-6-1)
      (d-5-2) edge node{$\beta_{2}^{n-2}$} (d-6-2)
      (d-5-3) edge node{$\beta_{3}^{n-2}$} (d-6-3)
      (d-5-4) edge node{$\beta_{4}^{n-2}$} (d-6-4)
      (d-5-6) edge node{$\beta_{n}^{n-2}$} (d-6-6)
      (d-5-7) edge node{$\Sigma\beta_{1}^{n-2}$} (d-6-7)
      (d-5-8) edge[densely dashed] node[near start]{$\Sigma\beta_{2}^{n-2}$} (d-6-8)
      
      (d-6-1) edge[densely dotted] (d-6-2)
      (d-6-2) edge (d-6-3)
              edge[densely dashed] node{$\beta^{n-1}_2$} (d-7-2)
      (d-6-3) edge (d-6-4)
              edge[densely dashed] node{$\beta^{n-1}_3$} (d-7-3)
      (d-6-4) edge (d-6-5)
              edge[densely dashed] node{$\beta^{n-1}_4$} (d-7-4)
      (d-6-5) edge node{$f_{n-1}$} (d-6-6)
      (d-6-6) edge (d-6-7)
      \downequal{d}{6-6}{7-6}
      (d-6-7) edge[densely dashed] (d-6-8)
              edge node{$\Sigma\beta_1^{n-1}$} (d-7-7)
      (d-6-8) edge[densely dashed] node{$\Sigma\beta^{n-1}_2$} (d-7-8)
      
      (d-7-2) edge[densely dashed] (d-7-3)
      (d-7-3) edge[densely dashed] (d-7-4)
      (d-7-4) edge[densely dashed] (d-7-5)
      (d-7-5) edge[densely dashed] (d-7-6)
      (d-7-6) edge node{$f_{n}$} (d-7-7)
      (d-7-7) edge[densely dashed] (d-7-8)
      \downequal{d}{7-7}{8-7}
      (d-7-8) edge[densely dashed] node{$\Sigma\beta_2^{n}$} (d-8-8)
      
      (d-8-7) edge node{$f_{n+1}$} (d-8-8);
    \end{tikzpicture}
    \caption{$f_{n+1}\todafc{f_{n},\dots,f_2,f_1}=\todafc{f_{n+1},\dots,f_3,f_2}(-1)^n
      \Sigma f_1$}
    \label{fig:prop-6-6-(6)}
  \end{figure}
    
  Suppose we have an element $\psi\in\todafc{f_n,\dots,f_2,f_1}$ given by the
  solid part and the dotted part of the diagram in \autoref{fig:prop-6-6-(6)};
  i.e.,
  \begin{equation*}
    \psi=\Sigma(\beta_1^{n-1}\cdots \beta^{2}_{1}\beta^1_{1}).
  \end{equation*}
  Then we see that the solid part of the diagram together with the dashed part
  make up a diagram representing an element
  $\psi'\in\todafc{f_{n+1},\dots,f_3,f_2}$, i.e.,
  \begin{equation*}
    \psi'=\Sigma(\beta^n_2\cdots\beta^3_{2}\beta^2_{2})
  \end{equation*} 
  and, including the blue part of the diagram in the upper right-hand corner, we
  may read off of the diagram that
  \begin{equation*}
    f_{n+1}\psi=\psi'(-1)^{n+1}\Sigma f_1\in\todafc{f_{n+1},\dots,f_3,f_2}(-1)^n\Sigma
    f_1.
  \end{equation*}
  Similarly, given an element represented by the solid and dashed parts of the
  diagram in \autoref{fig:prop-6-6-(6)}, i.e., an element
  $\psi'\in\todafc{f_{n+1},\dots,f_3,f_2}$, we may expand the diagram to include
  the dotted part such that the membership relation
  \begin{equation*}
    \psi'(-1)^n\Sigma f_1\in f_{n+1}\todafc{f_n,\dots,f_2,f_1}
  \end{equation*}
  is exhibited by the diagram.
\end{proof}

%
%
\section{Toda brackets determine the \texorpdfstring{$n$}{n}-angulation}
\label{sec:toda_determines_nang}

In this section we generalize a classical fact about $3$-fold Toda brackets in
triangulated categories due to Heller \cite[Theorem~13.2]{H} to our setting; see
also \cite[Theorem~B.1]{CF} and the references therein.

\begin{definition}
  \label{def:Yoneda}
  Let $\C$ be a pre-$n$-angulated category. An $n$-$\Sigma$-sequence
  \begin{center}
    \begin{tikzpicture}
      \diagram{d}{2em}{2em}{
        X_1 & X_2 & X_3 & \cdots & X_n & \Sigma X_{1}\\
      };
      
      \path[->, font = \scriptsize, auto]
      (d-1-1) edge node{$f_1$} (d-1-2)
      (d-1-2) edge node{$f_2$} (d-1-3)
      (d-1-3) edge node{$f_3$} (d-1-4)
      (d-1-4) edge node{$f_{n - 1}$} (d-1-5)
      (d-1-5) edge node{$f_n$} (d-1-6);
    \end{tikzpicture}
  \end{center}
  in $\C$ is \Def{Yoneda exact} if the sequence of abelian groups
  \begin{center}
    \begin{tikzpicture}
      \diagram{d}{2em}{3em}{
        \C(A, X_1) & \C(A, X_2) & \cdots & \C(A, \Sigma X_1) & \C(A, \Sigma X_2) \\
      };
      
      \path[->, font = \scriptsize, auto]
      (d-1-1) edge node{$(f_1)_\ast$} (d-1-2)
      (d-1-2) edge node{$(f_2)_\ast$} (d-1-3)
      (d-1-3) edge node{$(f_n)_\ast$} (d-1-4)
      (d-1-4) edge node{$(\Sigma f_1)_\ast$} (d-1-5);
    \end{tikzpicture}
  \end{center}
  is exact for every object $A$ of $\C$, or equivalently, the long sequence of
  abelian groups
  \begin{center}
    \begin{tikzpicture}
      \smalldiagram{d}{2em}{3em}{
        \cdots & \C(A, \Sigma^k X_1) & \C(A, \Sigma^k X_2) & \cdots & \C(A,
        \Sigma^{k + 1} X_1) & \cdots\\
      };

      \path[->, font = \scriptsize, auto]
      (d-1-1) edge (d-1-2)
      (d-1-2) edge node{$(\Sigma^k f_1)_\ast$} (d-1-3)
      (d-1-3) edge node{$(\Sigma^k f_2)_\ast$} (d-1-4)
      (d-1-4) edge node{$(\Sigma^k f_n)_\ast$} (d-1-5)
      (d-1-5) edge (d-1-6);
    \end{tikzpicture}
  \end{center}
  is exact for every object $A$ of $\C$.
\end{definition}

Note that what we call \Def{Yoneda exact} was called \Def{exact} in
\cite[Definition~2.1]{GKO}. We now state and prove the generalized version of
Heller's theorem.

\begin{proposition}\label{prop:NAngleCriterion}
  Let $\C$ be a pre-$n$-angulated category. An $n$-$\Sigma$-sequence
  \begin{center}
    \begin{tikzpicture}
      \diagram{d}{2em}{2em}{
        X_1 & X_2 & X_3 & \cdots & X_n & \Sigma X_{1}\\
      };
      
      \path[->, font = \scriptsize, auto]
      (d-1-1) edge node{$f_1$} (d-1-2)
      (d-1-2) edge node{$f_2$} (d-1-3)
      (d-1-3) edge node{$f_3$} (d-1-4)
      (d-1-4) edge node{$f_{n - 1}$} (d-1-5)
      (d-1-5) edge node{$f_n$} (d-1-6);
    \end{tikzpicture}
  \end{center}
  in $\C$ is an $n$-angle if and only if the following two conditions hold.
  \begin{enumerate}
  \item The $n$-$\Sigma$-sequence $X_{\bullet}$ is Yoneda exact. 
  \item The Toda bracket $\todacc{f_n, \ldots, f_2, f_1} \subseteq \C(\Sigma X_1,
    \Sigma X_1)$ contains the identity morphism $1_{\Sigma X_1}$.
  \end{enumerate}
\end{proposition}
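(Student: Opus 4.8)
The plan is to prove the two implications separately.

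\smallskip

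\emph{Necessity.} Assume $X_\bullet$ is an $n$-angle. That it is Yoneda exact is the standard fact that $n$-angles induce long exact sequences of $\Hom$-groups; this uses only \ref{N1}--\ref{N3}, hence holds in any pre-$n$-angulated category (it is what \cite{GKO} call exactness). Concretely, one first checks $f_{i+1}f_i=0$ by filling in, via \ref{N3}, a morphism from the trivial $n$-angle $(TX_1)_\bullet$ into $X_\bullet$ whose first two components are $1_{X_1}$ and $f_1$; the second commuting square forces $f_2f_1=0$, and \ref{N2} propagates this to all $i$. Exactness of each $\C(A,-)$-sequence then follows by a similar fill-in argument. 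For condition~(2), take the $n$-angle extension of $f_1$ appearing in \autoref{eq:TodaCC} to be $X_\bullet$ itself and all vertical morphisms to be identities; the morphism produced in the lower-right corner is then $1_{\Sigma X_1}$, so $1_{\Sigma X_1}\in\todacc{f_n,\dots,f_2,f_1}$.

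\smallskip

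\emph{Sufficiency.} Assume now that $X_\bullet$ is Yoneda exact and $1_{\Sigma X_1}\in\todacc{f_n,\dots,f_2,f_1}$. Since consecutive composites in an exact sequence vanish, Yoneda exactness yields $f_{i+1}f_i=0$ for $1\le i\le n-1$; thus \autoref{thm:main} and \autoref{thm:all_coincide} apply, so every version of the Toda bracket coincides with $\todacc{f_n,\dots,f_2,f_1}$ and contains $1_{\Sigma X_1}$. Unwinding the iterated cofiber description of this membership produces an $n$-angle $Y_\bullet$ extending $f_1$ together with a morphism of $n$-$\Sigma$-sequences $\phi=(1_{X_1},1_{X_2},\phi_3,\dots,\phi_n,1_{\Sigma X_1})\colon Y_\bullet\to X_\bullet$. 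Since $\nang$ is closed under isomorphisms (axiom \ref{N1a}), it suffices to prove that $\phi$ is an isomorphism; then $X_\bullet\cong Y_\bullet$ is an $n$-angle. By the Yoneda lemma this amounts to showing that $\C(A,\phi_k)$ is bijective for every object $A$ and every $k$.

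\smallskip

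\emph{Showing $\phi$ is an isomorphism.} Apply $\C(A,-)$ to the rotations of $Y_\bullet$ and of $X_\bullet$: both produce long exact sequences — the first because $Y_\bullet$ is an $n$-angle, the second by hypothesis — and $\phi$ induces a morphism of long exact sequences that is an isomorphism in every position coming from $X_1$ or $X_2$. When $n=3$ the $X_3$-position is flanked on both sides by such isomorphisms, so the five lemma immediately gives that $\C(A,\phi_3)$ is bijective for all $A$; this recovers Heller's theorem. For $n\ge 4$ the positions controlled by this single presentation are too sparse for the five lemma, and the heart of the proof is to supply the missing control from the other presentations made available above: the iterated fiber description of $1_{\Sigma X_1}$ gives a morphism $\gamma\colon X_\bullet\to W_\bullet$ into an $n$-angle $W_\bullet$ that is the identity in the $X_n$ and $\Sigma X_1$ spots, and the $[i]$-intermediate descriptions give, for each $2\le i\le n-1$, an $n$-angle $Z^i_\bullet$ agreeing with $X_\bullet$ at the spots $X_i,X_{i+1}$ together with comparison maps $X_\bullet\to Z^i_\bullet$ and $Z^i_\bullet\to X_\bullet$ on complementary halves. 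Feeding all of this into the map of long exact sequences lets one run the five lemma at every spot and conclude that each $\C(A,\phi_k)$ is bijective, so $\phi$ is an isomorphism. I expect this last step — organizing the iterated cofiber, iterated fiber, and intermediate presentations so that every object of $X_\bullet$ is pinned down simultaneously — to be the main obstacle; the remaining work is routine diagram chasing.
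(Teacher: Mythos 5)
Your necessity direction and the reduction of sufficiency to producing a comparison morphism $\phi=(1_{X_1},1_{X_2},\phi_3,\dots,\phi_n)\colon Y_\bullet\to X_\bullet$ match the paper. The gap is in what you do with $\phi$: your plan is to prove that $\phi$ is an \emph{isomorphism}, and this intermediate claim is false in general for $n\ge 4$. The membership $1_{\Sigma X_1}\in\todacc{f_n,\dots,f_2,f_1}$ only hands you \emph{some} $n$-angle extension $Y_\bullet$ of $f_1$ (and by \autoref{lem:AnyExtension} every extension admits such a fill-in), and distinct $n$-angle extensions of the same morphism need not be isomorphic: in $\mod^{\ff}\Z/p^2$ (\autoref{ex:Exotic4angle}), every $4$-angle on $p\colon R\to R$ has the form $R\to R\to R\oplus M\to R\oplus M\to R$ with $M$ an arbitrary free module, e.g.\ one can always inflate $Y_\bullet$ by adding rotated trivial $n$-angles. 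So if $X_\bullet$ is the minimal angle and $Y_\bullet$ a non-minimal extension exhibiting the membership, the maps $\phi_3,\dots,\phi_n$ cannot all be isomorphisms. This is also why your five-lemma strategy stalls: the comparison of long exact sequences is controlled only at the $X_1$-, $X_2$-positions and their suspensions, which for $n\ge4$ leaves stretches of length $n-2\ge2$ uncontrolled, and the proposed input from the iterated fiber and $[i]$-intermediate presentations involves \emph{different} angles and \emph{different} comparison maps, so it does not pin down the fixed maps $\phi_k$; you yourself flag this step as unresolved.

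The paper closes the argument differently: since $\phi$ is the identity in the two consecutive positions $X_1,X_2$, it is a \emph{weak isomorphism} of $n$-$\Sigma$-sequences, and both rows are Yoneda exact (the top because it is an $n$-angle, the bottom by hypothesis); one then invokes \cite[Lemma~2.4]{GKO}, which says that an exact $n$-$\Sigma$-sequence weakly isomorphic to an $n$-angle is itself an $n$-angle. That lemma is exactly the higher-$n$ substitute for the five-lemma step in Heller's $n=3$ argument, and its proof runs through closure of $\nang$ under direct sums and direct summands (\ref{N1a}) rather than through showing the comparison map is invertible. To repair your proof you would need either to quote that lemma or to reprove it; the isomorphism claim as stated cannot be salvaged.
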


\begin{proof}
  ($\implies$) An $n$-angle is Yoneda exact \cite[Proposition~2.5]{GKO}. For the
  second condition, consider the commutative diagram
  \begin{center}
    \begin{tikzpicture}    
      \diagram{d}{2em}{3em}{
        X_1 & X_2 & X_3 &\cdots & X_{n} & \Sigma X_1\\ 
        X_1 & X_2 & X_3 &\cdots & X_{n} & \Sigma X_1.\\ 
      };
      
      \path[->, font = \scriptsize, auto]
      (d-1-1) edge node{$f_1$} (d-1-2)
      (d-1-2) edge node{$f_2$} (d-1-3)
      (d-1-3) edge node{$f_3$} (d-1-4)
      (d-1-4) edge node{$f_{n-1}$} (d-1-5)
      (d-1-5) edge node{$f_n$} (d-1-6)
      
      (d-2-1) edge node{$f_1$} (d-2-2)
      (d-2-2) edge node{$f_2$} (d-2-3)
      (d-2-3) edge node{$f_3$} (d-2-4)
      (d-2-4) edge node{$f_{n-1}$} (d-2-5)
      (d-2-5) edge node{$f_n$} (d-2-6)
      
      \downequal{d}{1-1}{2-1}
      \downequal{d}{1-2}{2-2}
      (d-1-3) edge node{$1_{X_3}$} (d-2-3)
      (d-1-5) edge node{$1_{X_n}$} (d-2-5)
      (d-1-6) edge node{$1_{\Sigma X_1}$} (d-2-6);
    \end{tikzpicture}
  \end{center}
  Since the top row is an $n$-angle by assumption, this diagram exhibits the
  membership $1_{\Sigma X_1} \in \todacc{f_n, \ldots, f_2, f_1}$.
  
  ($\impliedby$) Assume $1_{\Sigma X_1} \in \todacc{f_n, \ldots, f_2, f_1}$ and
  that the $n$-$\Sigma$-sequence is Yoneda exact. Then we have some $n$-angle
  extension, $Y_\bullet$, of $f_1$ and a diagram
  \begin{center}
    \begin{tikzpicture}    
      \diagram{d}{2em}{3em}{
        X_1 & X_2 & Y_3 &\cdots & Y_{n} & \Sigma X_1\\ 
        X_1 & X_2 & X_3 &\cdots & X_{n} & \Sigma X_1\\ 
      };
      
      \path[->, font = \scriptsize, auto]
      (d-1-1) edge node{$f_1$} (d-1-2)
      (d-1-2) edge node{$y_2$} (d-1-3)
      (d-1-3) edge node{$y_3$} (d-1-4)
      (d-1-4) edge node{$y_{n-1}$} (d-1-5)
      (d-1-5) edge node{$y_n$} (d-1-6)
      
      (d-2-1) edge node{$f_1$} (d-2-2)
      (d-2-2) edge node{$f_2$} (d-2-3)
      (d-2-3) edge node{$f_3$} (d-2-4)
      (d-2-4) edge node{$f_{n-1}$} (d-2-5)
      (d-2-5) edge node{$f_n$} (d-2-6)
      
      \downequal{d}{1-1}{2-1}
      \downequal{d}{1-2}{2-2}
      (d-1-3) edge node{$\varphi_3$} (d-2-3)
      (d-1-5) edge node{$\varphi_n$} (d-2-5)
      (d-1-6) edge node{$1_{\Sigma X_1}$} (d-2-6);
    \end{tikzpicture}
  \end{center}
  which is a weak isomorphism between $n$-$\Sigma$-sequences. Since the top row is
  an $n$-angle, by \cite[Lemma~2.4]{GKO}, so is the bottom row.
\end{proof}

%
%
\section{Examples}
\label{sec:examples}

In this section, we compute examples of Toda brackets in $4$-angulated
categories. We first look at an exotic example, then examples from quiver
representation theory.

\subsection*{Exotic \texorpdfstring{$n$}{n}-angulated categories}

As a warm-up example, take the exotic $4$-angulated category $\C = \mod^{\ff}
\Z/p^2$ of finitely generated free modules over $R = \Z/p^2$ constructed in
\cite[Theorem~3.7]{BerghJT16}. The automorphism is the identity functor $\Sigma =
\Id$. Here $p \in \Z$ can be any prime number, since $n=4$ is even.

\begin{example}\label{ex:Exotic4angle}
  Let us compute the Toda bracket of the diagram
  \begin{center}
    \begin{tikzpicture}
      \diagram{d}{2em}{2em}{
        R & R & R & R & R\\
      };
      
      \path[->, auto, font = \scriptsize]
      (d-1-1) edge node{$p$} (d-1-2)
      (d-1-2) edge node{$p$} (d-1-3)
      (d-1-3) edge node{$p$} (d-1-4)
      (d-1-4) edge node{$p$} (d-1-5);
    \end{tikzpicture}
  \end{center}
  as a subset of $\C(\Sigma R, R) = \C(R,R) \cong \Z/p^2$. First we compute the
  iterated cofiber Toda bracket. By \autoref{lem:AnyExtension}, we may choose
  any $4$-angle extension of $f_1 = p \colon R \to R$ as top row in the
  following diagram, so we choose the easiest one:
  \begin{equation*}
    \begin{aligned}
      \begin{tikzpicture}
        \diagram{d}{2em}{2em}{
          R & R & R & R & R\\
          R & R & R & R & R.\\
        };
        
        \path[->, auto, font = \scriptsize]
        (d-1-1) edge node{$p$} (d-1-2)
        \downequal{d}{1-1}{2-1}
        (d-1-2) edge node{$p$} (d-1-3)
        \downequal{d}{1-2}{2-2}
        (d-1-3) edge node{$p$} (d-1-4)
                edge[densely dashed] node{$\phi_3$} (d-2-3)
        (d-1-4) edge[densely dashed]  node{$\phi_4$} (d-2-4)
                edge node{$p$} (d-1-5)
        (d-1-5) edge[densely dashed]  node{$\psi$} (d-2-5)
        
        (d-2-1) edge node{$p$} (d-2-2)
        (d-2-2) edge node{$p$} (d-2-3)
        (d-2-3) edge node{$p$} (d-2-4)
        (d-2-4) edge node{$p$} (d-2-5);
      \end{tikzpicture}
    \end{aligned}
  \end{equation*}
  The possible choices of extensions are $\phi_3 \equiv 1 \mod p$, \, $\phi_4
  \equiv 1 \mod p$, \, and $\psi \equiv 1 \mod p$, which yields
  \begin{equation*}
    \todacc{p,p,p,p} = \{ 1 + cp \mid c \in \Z \} = 1 + (p) \subset \Z/p^2.
  \end{equation*}
  Now let us compute the fiber-cofiber Toda bracket $\todafc{p,p,p,p}$ directly,
  checking \autoref{thm:main} in this case. Any $4$-angle extension of $p \colon
  R \to R$ is isomorphic to
  \begin{center}
    \begin{tikzpicture}
      \diagram{d}{2em}{2em}{
        R & R & R \op M & R \op M & R\\
      };
      
      \path[->, auto, font = \scriptsize]
      (d-1-1) edge node{$p$} (d-1-2)
      (d-1-2) edge node{$\smat{p \\ 0}$} (d-1-3)
      (d-1-3) edge node{$\smat{p & 0 \\ 0 & 1}$} (d-1-4)
      (d-1-4) edge node{$\smat{p & 0}$} (d-1-5);
    \end{tikzpicture}
  \end{center}
  for some finitely generated free $R$-module $M$, by
  \cite[Remark~3.1(5)]{BerghJT16}. Consider all possible dashed arrows in a
  diagram
  \begin{center}
    \begin{tikzpicture}    
      \diagram{d}{2em}{2.75em}{
        R & R & & & R & & \\
        R \op M & R & R & R \op M & R \op M & R & R \\
        & & R & R & R \op N & R \op N & R \\
        & & & R & R. \\
      };
      
      \path[->, font = \scriptsize, auto]
      (d-1-1) edge node{$f_1 = p$} (d-1-2)
      
      (d-2-1) edge node{$\smat{p & 0}$} (d-2-2)
      (d-2-2) edge node{$f_2=p$} (d-2-3)
      (d-2-3) edge node{$\smat{p \\ 0}$} (d-2-4)
      (d-2-4) edge node{$\smat{p & 0 \\ 0 & 1}$} (d-2-5)
      (d-2-5) edge node{$\smat{p & 0}$} (d-2-6)
      (d-2-6) edge node{$\Sigma f_2 = p$} (d-2-7)
      
      (d-3-3) edge node{$f_3=p$} (d-3-4)
      (d-3-4) edge node{$\smat{p \\ 0}$} (d-3-5)
      (d-3-5) edge node{$\smat{p & 0 \\ 0 & 1}$} (d-3-6)
      (d-3-6) edge node{$\smat{p & 0}$} (d-3-7)
      
      (d-4-4) edge node{$f_4 = p$} (d-4-5)
      
      (d-1-1) edge[densely dashed] node{$\Sigma^{-1}\beta^1_3$} (d-2-1)
      \downequal{d}{1-2}{2-2}
      (d-1-5) edge[densely dashed] node{$\beta^1_3$} (d-2-5)
      
      \downequal{d}{2-3}{3-3}   
      (d-2-4) edge[densely dashed] node{$\beta^2_1$} (d-3-4)
      (d-2-5) edge[densely dashed] node{$\beta^2_2$} (d-3-5)
      (d-2-6) edge[densely dashed] node{$\beta^2_3$} (d-3-6)
      \downequal{d}{2-7}{3-7}
      
      \downequal{d}{3-4}{4-4}
      (d-3-5) edge[densely dashed] node{$\beta^3_1$} (d-4-5);
    \end{tikzpicture}
  \end{center}
  A straightforward calculation yields that the possible composites $\beta^3_1
  \beta^2_2 \beta^1_3$ are
  \begin{equation*}
    \todafc{p,p,p,p} = \{ 1 + cp \mid c \in \Z \} = 1 + (p) \subset \Z/p^2.
  \end{equation*}
  In light of \autoref{lem:fc_in_cc}, the more delicate point was the inclusion
  \begin{equation*}
    \todacc{p,p,p,p} \subseteq \todafc{p,p,p,p}.
  \end{equation*}
  Given any element $\psi \in \todacc{p,p,p,p} = 1 +(p)$, this is achieved with
  the choices $M=N=0$, \, $\beta^2_2 = \beta^3_1 = 1$, and $\beta^1_3 = \psi$.
\end{example}

\begin{remark}
  The Toda bracket computation $\todacc{p,p,p,p}$ agrees with
  \autoref{prop:NAngleCriterion}. Since the morphisms $f_i=p$ form a $4$-angle,
  the bracket must contain $1_{\Sigma X_1} = 1 \in \Z/p^2$. Moreover, by
  \autoref{prop:brackets_are_cosets}, it must be a coset of the indeterminacy
  subgroup
  \begin{equation*}
    (f_4)_*\C(\Sigma X_1,X_{4}) + (\Sigma f_1)^*\C(\Sigma X_2,X_{5}) = (p)_* \C(R,R) +
    (p)^* \C(R,R) = (p) \subset \Z/p^2
  \end{equation*}
  and thus $\todacc{p,p,p,p} = 1 + (p)$.
\end{remark}

\begin{example}
  Assume that $p$ is odd and let us compute the Toda bracket of the diagram
  \begin{equation}\label{eq:Minus4angle}
    \begin{aligned}
      \begin{tikzpicture}
        \diagram{d}{2em}{2em}{
          R & R & R & R & R.\\
        };
        
        \path[->, auto, font = \scriptsize]
        (d-1-1) edge node{$p$} (d-1-2)
        (d-1-2) edge node{$p$} (d-1-3)
        (d-1-3) edge node{$p$} (d-1-4)
        (d-1-4) edge node{$-p$} (d-1-5);
      \end{tikzpicture}
    \end{aligned}
  \end{equation}
  Using \ref{item:Minus}, we obtain
  \begin{equation*}
    \todacc{-p,p,p,p} = -\todacc{p,p,p,p} = \{ -1 + cp \mid c \in \Z \} = -1 +
    (p) \subset \Z/p^2.
  \end{equation*}
  In particular, the $4$-$\Sigma$-sequence~\eqref{eq:Minus4angle} is not a
  $4$-angle, by \autoref{prop:NAngleCriterion}.
\end{example}

%
%
\subsection*{Quiver representation theory}
The prototypical example of an $n$-angulated category is that of
\cite[Theorem~1]{GKO}: if $\T$ is a triangulated category with an $(n -
2)$-cluster tilting subcategory $\C$ closed under $\Sigma^{n - 2}$ where
$\Sigma$ denotes the suspension in $\T$, then $(\C, \Sigma^{n - 2}, \nang)$ is
an $n$-angulated category where $\nang$ is the class of all $n$-$\Sigma^{n -
  2}$-sequences
\begin{center}
  \begin{tikzpicture}
    \diagram{d}{2em}{2em}{
      X_1 & X_2 & \cdots & X_n & \Sigma^{n - 2} X_1\\
    };
    
    \path[->, font = \scriptsize, auto]
    (d-1-1) edge node{$f_1$} (d-1-2)
    (d-1-2) edge node{$f_2$} (d-1-3)
    (d-1-3) edge node{$f_{n - 1}$} (d-1-4)
    (d-1-4) edge node{$f_n$} (d-1-5);
  \end{tikzpicture}
\end{center}
in $\C$ such that there exists a diagram in $\T$
\begin{center}
  \begin{tikzpicture}
    \diagram{d}{2em}{1.5em}{
      & X_2 && X_3 && \cdots &&& X_{n - 1} &\\
      X_1 && X_{2.5} && X_{3.5} & \cdots && X_{n - 1.5} && X_n\\
    };
    
    \path[->, font = \scriptsize, auto]
    (d-1-2) edge node{$f_2$} (d-1-4)
            edge (d-2-3)
    (d-1-4) edge node{$f_3$} (d-1-6)
            edge (d-2-5)
    (d-1-6) edge node{$f_{n - 2}$} (d-1-9)
    (d-1-9) edge node{$f_{n - 1}$} (d-2-10)
    
    (d-2-1) edge node{$f_1$} (d-1-2)
    (d-2-3) edge (d-1-4)
            edge[suspension,->] (d-2-1)
    (d-2-5) edge[suspension,->] (d-2-3)
    (d-2-6) edge[suspension,->] (d-2-5)
    (d-2-8) edge (d-1-9)
           edge[suspension,->] (d-2-6)
    (d-2-10) edge[suspension,->] (d-2-8);
  \end{tikzpicture}
\end{center}
with $X_i\in \T$ for all $i\notin \Z$ such that all triangles with base a
degree-shifting morphism are triangles in $\T$, and $f_n$ is the composition of
the bottom row.

\begin{remark}\label{rem:U_description}
  Given a finite dimensional algebra $\Lambda$ of global dimension at most
  \mbox{$n-2$}, we may apply \cite[Lemma~2.13 and Theorem~2.16]{IO2} to generate
  an $(n-2)$-cluster tilting subcategory $\U$ of the bounded derived category of
  finitely generated $\Lambda$-modules, $\derived^b(\mod\Lambda)$, which has the
  $n$-angulated structure described above following \cite[Example~6.1]{GKO}.
  Moreover, if $\Lambda$ has an $(n - 2)$-cluster tilting module $M$,
  \cite[Theorem~2.21]{IO2} provides a nice description of $\U$:
  \begin{equation*}\label{eq:U_description}
    \U = \add\{M[(n - 2)i]\mid i\in\Z\}
  \end{equation*}
  where $M[(n - 2)i]$ means the chain complex consisting of $M$ concentrated in
  degree $(n - 2)i$. By abuse of notation, we will write $M=M[0]$ when working
  with chain complexes. The next two examples utilize this construction with
  respect to $\Lambda$ a path algebra of a quiver with relations. For background
  on representations of quivers with relations, also known as bound quivers, see
  \cite[§II.2]{AssemSS06}.
\end{remark}

%
%
\begin{example}\label{ex:Q1}
  Let $Q_1$ denote the quiver with vertices and arrows as depicted in the solid
  part of \autoref{fig:Gamma1}. Note that the underlying undirected graph of
  $Q_1$ is the Dynkin $A_4$ diagram. From $Q_1$ we can form the path algebra
  $\field Q_1$ where $\field$ is a field. Let $J$ denote the $2$-sided ideal in
  $\field Q_1$ generated by all paths of length $1$, i.e., all arrows. The ideal
  $J$ is often referred to as the \Def{arrow ideal}. Next, we consider the
  quotient $\Gamma_1 = \field Q_1/J^3$ where $J^3$ denotes the $2$-sided ideal
  generated by all paths of length $3$, or $cba$, in our case. We can visualize
  this as shown in \autoref{fig:Gamma1}.
    
  \begin{figure}[htbp]
    \centering
    \begin{tikzpicture}
      \diagram{d}{2em}{2em}{
        1 & 2 & 3 & 4\\
      };
      
      \path[->, font = \scriptsize, auto]
      (d-1-1) edge node{$a$} (d-1-2)
              edge[out=320,in=220,densely dashed,->] (d-1-4)
      (d-1-2) edge node{$b$} (d-1-3)
      (d-1-3) edge node{$c$} (d-1-4);
    \end{tikzpicture}
    \caption{The quiver $Q_1$ with relation $cba = 0$ yielding $\Gamma_1 =
      \field Q_1/J^3$.}
    \label{fig:Gamma1}
  \end{figure}
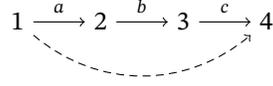

  Next, we denote by $\A_1 \coloneq \mod \Gamma_1$ the category of all finitely
  generated right $\Gamma_1$-modules. We can visualize $\A_1$ as shown in
  \autoref{fig:modGamma1}. If $M_1$ denotes the direct sum of the encircled
  modules, then $M_1$ is a $2$-cluster tilting module in the sense of
  \cite{Iyama}:
  \begin{align*}
    \add M_1 &= \{ X\in \A_1 \mid \Ext_{\Gamma_1}^1 (X,M_1) = 0\}\\
             &= \{ X\in \A_1 \mid \Ext_{\Gamma_1}^1 (M_1, X) = 0\}.
  \end{align*}
  We can see this by a straightforward computation or by \cite[Theorem~3]{V}.
    
  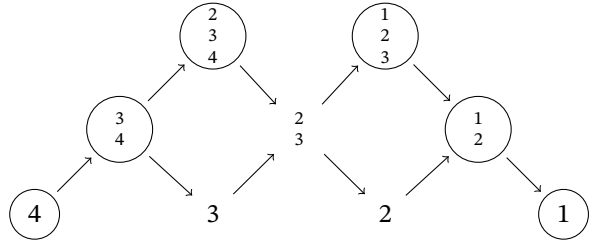
\begin{figure}[htbp]
    \centering
    \begin{tikzpicture}
      \diagram{d}{1em}{1em}{
        && |[draw, circle]|{\scriptsize \rep{2}{3\\ 4}} &&
        |[draw, circle]|{\scriptsize \rep{1}{2\\ 3}} &&\\
        & |[draw, circle]|{\scriptsize \rep{3}{4}} && {\scriptsize \rep{2}{3}} &&
        |[draw, circle]|{\scriptsize \rep{1}{2}} &\\
        |[draw, circle]|4 && 3 && 2 && |[draw, circle]|1\\
      };
      
      \path[encircled, font = \scriptsize, auto]
      (d-1-3) edge (d-2-4)
      (d-1-5) edge (d-2-6)
      
      (d-2-2) edge (d-1-3)
              edge (d-3-3)
      (d-2-4) edge (d-1-5)
              edge (d-3-5)
      (d-2-6) edge (d-3-7)
      
      (d-3-1) edge (d-2-2)
      (d-3-3) edge (d-2-4)
      (d-3-5) edge (d-2-6);
    \end{tikzpicture}
    \caption{The module category $\A_1$ of $\Gamma_1$.}
    \label{fig:modGamma1}
  \end{figure}
    
  Furthermore, in this depiction of $\A_1$, we follow the convention in
  representation theory and write
  \begin{center}
    \begin{tikzpicture}
      \begin{scope}
        \diagram{d}{1.25em}{1em}{
          1 = \bigl(\field & 0 & 0 & 0\bigr)\phantom{.}\\
          2 = \bigl(0 & \field & 0 & 0\bigr)\phantom{.}\\
          3 = \bigl(0 & 0 & \field & 0\bigr)\phantom{.}\\
          4 = \bigl(0 & 0 & 0 & \field\bigr)\phantom{.}\\
        };
        
        \path[->, font = \scriptsize, auto]
        (d-1-1) edge (d-1-2)
        (d-1-2) edge (d-1-3)
        (d-1-3) edge (d-1-4)
        
        (d-2-1) edge (d-2-2)
        (d-2-2) edge (d-2-3)
        (d-2-3) edge (d-2-4)
        
        (d-3-1) edge (d-3-2)
        (d-3-2) edge (d-3-3)
        (d-3-3) edge (d-3-4)
        
        (d-4-1) edge (d-4-2)
        (d-4-2) edge (d-4-3)
        (d-4-3) edge (d-4-4);
      \end{scope}
      
      \begin{scope}[xshift=4cm]
        \diagram{d}{1.25em}{1em}{
          {\scriptsize \rep{1}{2}}\hspace*{-1.5mm} = \bigl(\field & \field &
          0 & 0\bigr)\phantom{.}\\
          {\scriptsize \rep{2}{3}}\hspace*{-1.5mm} = \bigl(0 & \field & \field
          & 0\bigr)\\
          {\scriptsize \rep{3}{4}}\hspace*{-1.5mm} = \bigl(0 & 0 & \field
          & \field\bigr)\\
          \phantom{0}\\
        };
        
        \path[->, font = \scriptsize, auto]
        (d-1-1) edge node{$1$} (d-1-2)
        (d-1-2) edge (d-1-3)
        (d-1-3) edge (d-1-4)
        
        (d-2-1) edge (d-2-2)
        (d-2-2) edge node{$1$} (d-2-3)
        (d-2-3) edge (d-2-4)
        
        (d-3-1) edge (d-3-2)
        (d-3-2) edge (d-3-3)
        (d-3-3) edge node{$1$} (d-3-4);
      \end{scope}
      
      \begin{scope}[xshift=8cm]
        \diagram{d}{1.25em}{1em}{
          {\scriptsize \rep{1}{2\\ 3}}\hspace*{-1.5mm} = \bigl(\field & \field & \field
          & 0\bigr)\\
          {\scriptsize \rep{2}{3\\ 4}}\hspace*{-1.5mm} = \bigl(0 & \field & \field
          & \field\bigr).\\
          \phantom{0}\\
          \phantom{0}\\
        };
        
        \path[->, font = \scriptsize, auto]
        (d-1-1) edge node{$1$} (d-1-2)
        (d-1-2) edge node{$1$} (d-1-3)
        (d-1-3) edge (d-1-4)
        
        (d-2-1) edge (d-2-2)
        (d-2-2) edge node{$1$} (d-2-3)
        (d-2-3) edge node{$1$} (d-2-4);
      \end{scope}
    \end{tikzpicture}
  \end{center}
  The indecomposable projectives in $\A_1$ are denoted by
  \begin{equation*}
    P_1 = \hspace*{-1.5mm} {\scriptsize \rep{1}{2\\ 3}} \, , \qquad P_2 =
    \hspace*{-1.5mm} {\scriptsize \rep{2}{3\\4}} \, , \qquad P_3 =
    \hspace*{-1.5mm} {\scriptsize \rep{3}{4}} \qquad \text{and} \qquad P_4 = 4,
  \end{equation*}
  and, similarly, the indecomposable injectives in $\A_1$ are denoted by
  \begin{equation*}
    I_1 = 1, \qquad I_2 = \hspace*{-1.5mm} {\scriptsize \rep{1}{2}} \, ,
    \qquad I_3 = P_1 \qquad \text{and} \qquad I_4 = P_2.
  \end{equation*}
  
  As $\Gamma_1$ is a finite-dimensional $\field$-algebra, the global dimension
  of $\Gamma_1$ is the supremum of the projective dimensions of the simples,
  cf.\ \cite[Corollary~7.1.14]{McConnell-Robson}, and since there are only four
  simple modules in $\A_1$, $S_1 = 1$, $S_2 = 2$, $S_3 = 3$ and $S_4 = 4$, a
  straightforward calculation gives that $\pd S_1 = 2$, $\pd S_2 = \pd S_3 = 1$
  and $\pd S_4 = 0$. Thus $\gldim \Gamma_1 = 2$. By \autoref{rem:U_description},
  it follows that we have a $4$-angulated category of the form
  \begin{equation*}
    \U_1=\add \{M_1[2i] \mid i\in\Z\}\subseteq \derived^b(\A_1).
  \end{equation*}
    
  We will consider the two following $4$-angles in $\U$:
  \begin{center}
    \begin{tikzpicture}
      \diagram{d}{1em}{2em}{
        P_4 & P_2 & P_1 & I_1 & \Sigma^2 P_4\\
        P_4 & P_3 & P_1 & I_2 & \Sigma^2 P_4.\\
      };
      
      \path[->, font = \scriptsize, auto]
      (d-1-1) edge node{$f_1$} (d-1-2)
      (d-1-2) edge node{$f_2$} (d-1-3)
      (d-1-3) edge node{$f_3$} (d-1-4)
      (d-1-4) edge node{$f_4$} (d-1-5)
      
      (d-2-1) edge node{$g_1$} (d-2-2)
      (d-2-2) edge node{$g_2$} (d-2-3)
      (d-2-3) edge node{$g_3$} (d-2-4)
      (d-2-4) edge node{$g_4$} (d-2-5);
    \end{tikzpicture}
  \end{center}
  
  We want to compute the Toda bracket $\toda{h_4,h_3,h_2,f_1}$ where
  \begin{center}
    \begin{tikzpicture}
      \diagram{d}{2em}{2em}{
        P_4 & P_2 & I_2 & \Sigma^2 P_4 & \Sigma^2 P_3\\
      };
      
      \path[->, font = \scriptsize, auto]
      (d-1-1) edge node{$f_1$} (d-1-2)
      (d-1-2) edge node{$h_2$} (d-1-3)
      (d-1-3) edge node{$h_3$} (d-1-4)
      (d-1-4) edge node{$h_4$} (d-1-5);
    \end{tikzpicture}
  \end{center}
  is a diagram in $\derived^b(\A_1)$ such that $h_2 f_1 = h_3 h_2 = h_4 h_3 =
  0$.
  
  Let $h_2 = g_3f_2$, $h_3 = g_4$ and $h_4 = \Sigma^2 g_1$. In order to compute
  $\toda{\Sigma^2 g_1,g_4,g_3f_2,f_1}$ we consider the $4$-angle extension of
  $f_1$ listed above, and use the iterated cofiber Toda bracket. That is, we
  have the following commutative diagram.
  \begin{center}
    \begin{tikzpicture}
      \diagram{d}{2em}{2em}{
        P_4 & P_2 & P_1 & I_1 & \Sigma^2 P_4\\
        P_4 & P_2 & I_2 & \Sigma^2 P_4 & \Sigma^2 P_3\\
      };
      
      \path[->, font = \scriptsize, auto]
      (d-1-1) edge node{$f_1$} (d-1-2)
      \downequal{d}{1-1}{2-1}
      (d-1-2) edge node{$f_2$} (d-1-3)
      \downequal{d}{1-2}{2-2}
      (d-1-3) edge node{$f_3$} (d-1-4)
      edge[densely dashed] node{$\phi_1$} (d-2-3)
      (d-1-4) edge node{$f_4$} (d-1-5)
      edge[densely dashed] node{$\phi_2$}(d-2-4)
      (d-1-5) edge[densely dashed] node{$\psi$}(d-2-5)
      
      (d-2-1) edge node{$f_1$} (d-2-2)
      (d-2-2) edge node{$g_3f_2$} (d-2-3)
      (d-2-3) edge node{$g_4$} (d-2-4)
      (d-2-4) edge node{$\Sigma^2 g_1$} (d-2-5);
    \end{tikzpicture}
  \end{center}

  Note that the (minimal) projective resolution of $I_1$ and $I_2$ are given by
  \begin{center}
    \begin{tikzpicture}
      \diagram{d}{2em}{1.5em}{
        0 & P_4 & P_2 & P_1 & I_1\\
      };
      
      \path[->, font = \scriptsize, auto]
      (d-1-1) edge (d-1-2)
      (d-1-2) edge (d-1-3)
      (d-1-3) edge (d-1-4)
      (d-1-4) edge[->>] (d-1-5);
    \end{tikzpicture}
    \quad
    \begin{tikzpicture}
      \diagram{d}{2em}{1em}{
        \text{and}\\
      };
    \end{tikzpicture}
    \quad
    \begin{tikzpicture}
      \diagram{d}{2em}{1.5em}{
        0 & P_4 & P_3 & P_1 & I_2\\
      };
      
      \path[->, font = \scriptsize, auto]
      (d-1-1) edge (d-1-2)
      (d-1-2) edge (d-1-3)
      (d-1-3) edge (d-1-4)
      (d-1-4) edge[->>] (d-1-5);
    \end{tikzpicture}
  \end{center}
  respectively. Consider $\phi_1 = \alpha g_3$ for $\alpha\in \field$. Using the
  projective resolution of $I_2$ and the fact that $\U_1(P_2, P_3) = 0$, we get
  that there is a chain homotopy
  \begin{equation*}
    \alpha g_3 f_2 \simeq g_3 f_2
  \end{equation*}
  if and only if $\alpha = 1$. That is, $\phi_1 = g_3$. Note that $g_4g_3 = 0$
  so that $\phi_2$ must satisfy $\phi_2 f_3 = 0$. Since
  \begin{equation*}
    \U_1(I_1,\Sigma^2P_4) = \Ext_{\Gamma_1}^2(I_1,P_4) = \Span_\field\{f_4\}
  \end{equation*}
  and $f_4f_3=0$, it follows that $\phi_2$ must be a scalar multiple of $f_4$.
  To this end, consider $\phi_2 = \beta f_4$ for $\beta\in \field$. By a similar
  argument, consider $\psi = \gamma \Sigma^2 g_1$ for $\gamma\in \field$. Using
  the projective resolution of $I_1$ and again the fact that $\U_1(P_2, P_3) =
  0$, we get that
  \begin{equation*}
    \beta (\Sigma^2 g_1) f_4 \simeq \gamma (\Sigma^2 g_1) f_4
  \end{equation*}
  if and only if $\beta = \gamma$. Hence,
  \begin{equation}\label{eq:toda_Q1}
    \toda{\Sigma^2 g_1, g_4, g_3 f_2, f_1} = \Span_\field \{\Sigma^2 g_1\} =
    \U_1(\Sigma^2 P_4, \Sigma^2 P_3)
  \end{equation}
  where we have used the fact that $\U_1(P_4,P_3) = \Span_\field \{g_1\}$.

  We can also understand what $\toda{\Sigma^2 g_1, g_4, g_3 f_2, f_1}$ is by
  appealing to the indeterminacy subgroup. By \autoref{prop:brackets_are_cosets}
  we know that $\toda{\Sigma^2 g_1, g_4, g_3 f_2, f_1}$ is a coset of the
  indeterminacy subgroup
  \begin{equation*}
    (\Sigma^2 g_1)_\ast \U_1(\Sigma^2 P_4, \Sigma^2 P_4) + (\Sigma^2 f_1)^\ast
    \U_1(\Sigma^2 P_2, \Sigma^2 P_3).
  \end{equation*}
  Clearly,
  \begin{equation*}
    \U_1(\Sigma^2 P_4, \Sigma^2 P_4) = \Span_\field \{1_{\Sigma^2 P_4}\} \qquad
    \text{and} \qquad \U_1(\Sigma^2 P_2, \Sigma^2 P_3) = 0.
  \end{equation*}
  Hence, $\toda{\Sigma^2 g_1, g_4, g_3 f_2, f_1}$ is a $1$-dimensional affine
  subspace of a $1$-dimensional space and, so, \eqref{eq:toda_Q1} holds.
  
  Finally, we may get same result using juggling formulas. To see this consider
  the diagram
  \begin{center}
    \begin{tikzpicture}
      \diagram{d}{2em}{2em}{
        P_4 & P_2 & P_1 & I_2 & \Sigma^2 P_4 & \Sigma^2 P_3\\
      };
      
      \path[->, font = \scriptsize, auto]
      (d-1-1) edge node{$f_1$} (d-1-2)
      (d-1-2) edge node{$f_2$} (d-1-3)
      (d-1-3) edge node{$g_3$} (d-1-4)
      (d-1-4) edge node{$g_4$} (d-1-5)
      (d-1-5) edge node{$\Sigma^2 g_1$} (d-1-6);
    \end{tikzpicture}
  \end{center}
  in $\derived^b(\A_1)$. Then, by \ref{prop-6-6-5}, we have for $i = 3$ that
  \begin{equation*}
    \toda{\Sigma^2 g_1, g_4 g_3, f_2, f_1} = \toda{\Sigma^2 g_1, g_4, g_3 f_2, f_1}.
  \end{equation*}
  Note that the composition of two consecutive morphisms is zero in both of
  these Toda brackets as $g_4 g_3 = 0$. A straightforward computation then shows
  that
  \begin{equation*}
    \toda{\Sigma^2 g_1, g_4 g_3, f_2, f_1} = \toda{\Sigma^2 g_1, 0, f_2, f_1} =
    \U_1(\Sigma^2 P_4, \Sigma^2 P_3),
  \end{equation*}
  and so, \eqref{eq:toda_Q1} holds.
\end{example}

%
%
\begin{example}
  Let $Q_2$ denote the quiver with vertices and arrows as shown in \autoref{fig:quiver2}.
    
  \begin{figure}[htbp]
    \centering
    \begin{tikzpicture}
      \diagram{d}{3em}{1.5em}{
        &&& 1\\
        2 && 3 && 4 && 5\\
        &&& 6\\
      };
      
      \path[->, font = \scriptsize, auto]
      (d-1-4) edge node[swap]{$a$} (d-2-1)
              edge node{$b$} (d-2-3)
              edge node[swap]{$c$} (d-2-5)
              edge node{$d$} (d-2-7)
      
      (d-2-1) edge node[swap]{$a'$} (d-3-4)
      (d-2-3) edge node{$b'$} (d-3-4)
      (d-2-5) edge node[swap]{$c'$} (d-3-4)
      (d-2-7) edge node{$d'$} (d-3-4);
    \end{tikzpicture}
    \caption{The quiver $Q_2$.}
    \label{fig:quiver2}
  \end{figure}
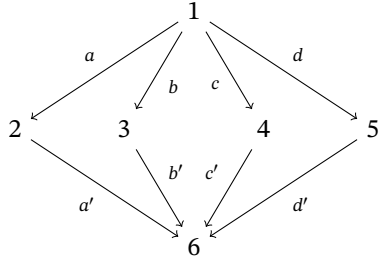
    
  Consider the ideal of the path algebra $\field Q_2$ given by the presentation
  \begin{equation*}
    I\coloneq \langle a'a + b'b + c'c , a'a + \lambda b'b + d'd\rangle,
  \end{equation*}
  for some $\lambda\in \field\setminus\{0, 1\}$, denote by $\Gamma_2 \coloneq
  \field Q_2/I$ the quotient algebra and denote by $\A_2=\mod \Gamma_2$ the
  category of finitely generated modules. We have the following indecomposable
  projective and injective modules
  \begin{equation*}
    \begin{aligned}
      P_1 &= \hspace*{-1.5mm} {\scriptsize \rep{1}{2~3~4~5\\ 6^2}} &&& P_2 &=
      \hspace*{-1.5mm} {\scriptsize \rep{2}{6}} &&& P_3 &= \hspace*{-1.5mm}
      {\scriptsize \rep{3}{6}} &&& P_4 &= \hspace*{-1.5mm} {\scriptsize \rep{4}{6}}
      &&& P_5 &= \hspace*{-1.5mm} {\scriptsize \rep{5}{6}} &&& P_6 &= 6\\
      I_1 &= 1 &&& I_2 &= \hspace*{-1.5mm} {\scriptsize \rep{1}{2}} &&& I_3
      &= \hspace*{-1.5mm} {\scriptsize \rep{1}{3}} &&& I_4 &= \hspace*{-1.5mm}
      {\scriptsize \rep{1}{4}} &&& I_5 &= \hspace*{-1.5mm} {\scriptsize \rep{1}{5}}
      &&& I_6 &= \hspace*{-1.5mm} {\scriptsize \rep{1^2}{2~3~4~5\\ 6}}
    \end{aligned}
  \end{equation*}
  where the morphisms $\field\to\field$ are the identity wherever possible in
  the quiver representation and where $P_1$ is represented by
  \autoref{fig:P1_in_tub}. Note that $I_6$ is dual to $P_1$.
    
  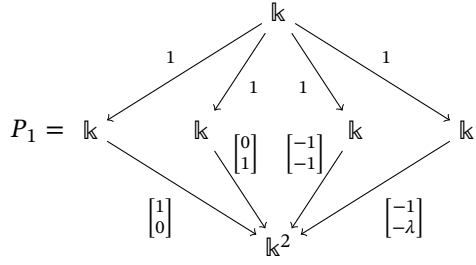
\begin{figure}[htbp]
    \centering
    \begin{tikzpicture}
      \begin{scope}[xshift=-3.2cm]
        \node at (0,0){$P_1=$};
      \end{scope}
      
      \begin{scope}
        \diagram{d}{3em}{1.5em}{
          &&& \field\\
          \field && \field && \field && \field\\
          &&& \field^2\\
        };
        
        \path[->, font = \scriptsize, auto]
        (d-1-4) edge node[swap]{$1$} (d-2-1)
                edge node{$1$} (d-2-3)
                edge node[swap]{$1$} (d-2-5)
                edge node{$1$} (d-2-7)
        
        (d-2-1) edge node[swap]{$\left[\begin{matrix}1\\0\end{matrix}\right]$} (d-3-4)
        (d-2-3) edge node[xshift=-6pt,yshift=2pt]{$\left[\begin{matrix}
              0\\1\end{matrix}\right]$} (d-3-4)
        (d-2-5) edge node[swap,xshift=6pt,yshift=2pt]{$\left[\begin{matrix}
              -1\\-1\end{matrix}\right]$} (d-3-4)
        (d-2-7) edge node{$\left[\begin{matrix}
              -1\\-\lambda\end{matrix}\right]$} (d-3-4);
      \end{scope}
    \end{tikzpicture}
    \caption{The quiver representation of the module $P_1$.}
    \label{fig:P1_in_tub}
  \end{figure}
  
  It is easily verified that $\gldim \Gamma_2 = 2$ by applying
  \cite[Corollary~7.1.14]{McConnell-Robson}. Consider the following projective
  resolutions of the simple modules:
  \begin{center}
    \begin{tikzpicture}
      \begin{scope}
        \diagram{d}{2em}{2em}{
          0 & P_6^2 & P_2\oplus P_3\oplus P_4\oplus P_5 & P_1 & S_1=I_1\\ 
        };
        
        \path[->, font = \scriptsize, auto]
        (d-1-1) edge (d-1-2)
        (d-1-2) edge node{$d_2^{I_1}$} (d-1-3)
        (d-1-3) edge node{$d_1^{I_1}$} (d-1-4)
        (d-1-4) edge[->>] (d-1-5);
      \end{scope}
      
      \begin{scope}[yshift=-.8cm]
        \diagram{d}{2em}{2em}{
          0 & P_6 & P_i & S_i\\ 
        };
        
        \path[->, font = \scriptsize, auto]
        (d-1-1) edge (d-1-2)
        (d-1-2) edge (d-1-3)
        (d-1-3) edge[->>] (d-1-4);
      \end{scope}
    \end{tikzpicture}
  \end{center}
  for all $i=2,\dots,5$, and note that $S_6=P_6$. Thus we see that $I_1$
  determines the global dimension to be 2.

  Thus we may apply \autoref{rem:U_description} and get that the full subcategory
  \begin{equation*}
    \U_2 = \add\{M_2[2i]\mid i\in\Z\}\subseteq\derived^b(\A_2)
  \end{equation*}
  has a $4$-angulated structure, where a calculation shows that
  \begin{equation*}
 	  M_2 \cong P_1\oplus \cdots\oplus P_6\oplus I_1\oplus\cdots \oplus I_6.
  \end{equation*}
  
  For this example we are only concerned with direct sums of objects of the form
  $N[2i]$ such that $N\in\add M_2,\ i\in\Z$.

  In this subcategory, we may describe all morphisms in the following way: for
  $i$ an integer, $P$ and $P'$ indecomposable projectives, and $I$ and $I'$
  indecomposable injectives, we have
  \begin{align*}
 		\U_2\bigl(P\oplus I,(P'\oplus I')[2i]\bigr)
    &\cong
      \begin{cases}
        \A_2(P,P'\oplus I')\oplus\A_2(I,I')
        & i=0,\\
        \Ext^2_{\Gamma_2}(I,P')
        & i=1,\\
        0 & i\neq 0,1.
      \end{cases}
  \end{align*}
  so the only potential non-trivial morphisms of $\U_2$ that we are interested
  in occur in the $\field$-vector spaces
  \begin{equation*}
    \A_2(P,P'),\quad\A_2(P,I'),\quad \A_2(I,I')\quad\text{and}\quad \Ext^2_{\Gamma_2}(I,P').
  \end{equation*}
    
  One may directly verify that, if we exclude the case $\U_2(I,P)$ for now, then
  $\U_2(M,N)$ is $0$- or $1$-dimensional except for three cases in which it is
  $2$-dimensional:
  \begin{align*}
    \U_2(P_6,P_1)&=\Span_{\field}\{p_{61}^1,p_{61}^2\},\\
    \U_2(P_1,I_6)&=\Span_{\field}\{\pi_{16}^1,\pi_{16}^2\},\\
    \U_2(I_6,I_1)&=\Span_{\field}\{i_{61}^1,i_{61}^2\}.
  \end{align*}
  We label the basis vectors of the $1$-dimensional spaces similarly, e.g., 
  \begin{equation*}	
    \U_2(P_2,I_2)=\Span_\field\{\pi_{22}\}
  \end{equation*}
  and for any indecomposable, either projective or injective, $M$, the
  endomorphism algebra is 1-dimensional
  \begin{equation*}
    \U_2(M,M)=\Span_\field\{1_M\}.
  \end{equation*}
  Finally we calculate that, in the case of $\U_2(I,P)$,
  \begin{equation*}
		\dim_\field\Ext^2_{\Gamma_2}(I_i,P_j)=\begin{cases}
			1 & \text{for }2\leq i=j\leq 5,\\
			1 & \text{for }i=1\text{ and }j\neq6,\\
			1 & \text{for }i\neq1\text{ and }j=6,\\
			2 & \text{for }i=1\text{ and }j=6,\\
			0 & \text{otherwise.}
		\end{cases}
  \end{equation*}
  We will label the generating extensions by
  \begin{equation*}
		\U_2(I_i,P_j)=\Span_\field\{\varepsilon_{ij}\}.
  \end{equation*}
  Note that for an injective $I$, since $\Omega^2I$ is either $P_6$ or $P_6^2$,
  all extensions may be realized as certain morphisms in $\add P_6$ which induce
  non-nullhomotopic morphisms in $\htpy^b (\proj\A_2)\simeq \derived^b(\A_2)$.
  In most cases there is an obvious choice of representative in $\add P_6$,
  however, particularly of note is $\Ext^2_{\Gamma_2}(I_1,P_1)$: recall that
  $\Omega^2I_1=P_6^2$ so we may realize the $1$-dimensional space
  $\Ext^2_{\Gamma_2}(I_1,P_1)$ as a quotient of the $4$-dimensional space
  \begin{equation*}
    \A_2(P_6^2,P_1)=\Span_\field\left\{
      \left[\begin{matrix}
          p_{61}^1 & 0
        \end{matrix}\right],
      \left[\begin{matrix}
          p_{61}^2 & 0
        \end{matrix}\right],
      \left[\begin{matrix}
          0& p_{61}^1
        \end{matrix}\right],
      \left[\begin{matrix}
          0& p_{61}^2
        \end{matrix}\right]
    \right\}
  \end{equation*}
  by the image of the morphism induced by $d_2^{I_1}$, the second differential
  of the projective resolution of $I_1$. It turns out that in this quotient, the
  morphism $\left[\begin{matrix} p_{61}^1 & 0
	   \end{matrix}\right]$
   is identified with the zero morphism, while the remaining three basis vectors
   are non-zero. To represent the morphism $\varepsilon_{11}$, the morphism
   $\left[\begin{matrix} p_{61}^2 & 0
	   \end{matrix}\right]$
   happens to be a convenient choice.

   Now let us calculate the iterated cofiber Toda bracket of the sequential
   diagram
   \begin{center}
     \begin{tikzpicture}	
       \diagram{d}{2em}{3em}{
         P_6 & P_2 & I_2 & I_1 & \Sigma^2 P_1.\\ 
       };
       
       \path[->, font = \scriptsize, auto]
       (d-1-1) edge node{$p_{62}$} (d-1-2)
       (d-1-2) edge node{$\pi_{22}$} (d-1-3)
       (d-1-3) edge node{$i_{21}$} (d-1-4)
       (d-1-4) edge node{$\varepsilon_{11}$} (d-1-5);
     \end{tikzpicture}
   \end{center}
   We need to make sure that consecutive composites are zero: One may easily
   verify that $\pi_{22}p_{62}=0$ and $i_{21}\pi_{22}=0$ already in $\mod\A_2$.
   Furthermore, we know that
    \begin{center}
      \begin{tikzpicture}
        \diagram{d}{1.5em}{1.5em}{
          \varepsilon_{11}i_{21}\colon I_2 & \Sigma^2 P_1\\
        };
        
        \path[->]
        (d-1-1) edge (d-1-2);
      \end{tikzpicture}
    \end{center}
    is trivial in $\U$ since $\Ext_{\Gamma_2}^2(I_2, P_1) = 0$. Thus, by
    \autoref{thm:main}, the iterated cofiber Toda bracket
    $\todacc{\varepsilon_{11},i_{21},\pi_{22},p_{62}}$ coincides with \emph{the}
    Toda bracket. It turns out that the diagram above is almost a $4$-angle in
    $\U_2$; consider the diagram defining the iterated cofiber Toda bracket:
    \begin{center}
      \begin{tikzpicture}	
        \diagram{d}{2em}{3em}{
          P_6 & P_2 & I_2 & I_1 & \Sigma^2 P_6\\
          P_6 & P_2 & I_2 & I_1 & \Sigma^2 P_1.\\ 
		    };
        
		    \path[->, font = \scriptsize, auto]
		    (d-1-1) edge node{$p_{62}$} (d-1-2)
		    (d-1-2) edge node{$\pi_{22}$} (d-1-3)
		    (d-1-3) edge node{$i_{21}$} (d-1-4)
		    (d-1-4) edge node{$\varepsilon_{16}^1$} (d-1-5)
        
		    (d-2-1) edge node{$p_{62}$} (d-2-2)
		    (d-2-2) edge node{$\pi_{22}$} (d-2-3)
		    (d-2-3) edge node{$i_{21}$} (d-2-4)
		    (d-2-4) edge node{$\varepsilon_{11}$} (d-2-5)
        
        \downequal{d}{1-1}{2-1}
        \downequal{d}{1-2}{2-2}
		    (d-1-3) edge[densely dashed] node{$\varphi_3$} (d-2-3)
		    (d-1-4) edge[densely dashed] node{$\varphi_4$} (d-2-4)
		    (d-1-5) edge[densely dashed] node{$\psi$} (d-2-5);
	    \end{tikzpicture}
    \end{center}
    We see that $p_{61}^2\in\todacc{\varepsilon_{11},i_{21},\pi_{22},p_{62}}$ by
    taking $\varphi_3\coloneq 1_{I_2}$ and $\varphi_4\coloneq 1_{I_1}$;
    recalling that $\varepsilon_{11}$ may be represented by
    $\left[\begin{matrix} p_{61}^2 & 0
      \end{matrix}\right]$,
    the commutativity of the last square can be witnessed by working with
    projective resolutions and chain homotopies:
    \begin{center}
	    \begin{tikzpicture}	
		    \diagram{d}{2em}{2em}{
          I_1 & \Sigma^2 P_6 & \Sigma^2 P_1 \\[-1em]
          P_6^2 & P_6  & P_1 \\
          P_2\oplus\cdots\oplus P_5 & 0 & 0\\
          P_1 & 0 & 0.\\
		    };
        
		    \path[->, font = \scriptsize, auto]
		    (d-1-1) edge node{$\varepsilon_{16}^1$} (d-1-2)
		    (d-1-2) edge node{$\Sigma^2p_{61}^2$} (d-1-3)
        
		    (d-2-1) edge node{$\left[\begin{matrix}
		          1&0
		        \end{matrix}\right]$} (d-2-2)
		    (d-2-2) edge node{$p_{61}^2$} (d-2-3)
        
		    (d-3-1) edge (d-3-2)
		    (d-3-2) edge (d-3-3)
        
		    (d-4-1) edge (d-4-2)
		    (d-4-2) edge (d-4-3)
        
		    (d-2-1) edge node[swap]{$d^{I_1}_2$} (d-3-1)
		    (d-3-1) edge node[swap]{$d^{I_1}_1$} (d-4-1)
        
		    (d-2-2) edge (d-3-2)
		    (d-3-2) edge (d-4-2)
        
		    (d-2-3) edge (d-3-3)
		    (d-3-3) edge (d-4-3);
	    \end{tikzpicture}
    \end{center}
    It is furthermore clear that $(\Sigma^2p^1_{61})\varepsilon_{16}^1=0$, since
    $\left[\begin{matrix} p_{61}^1 & 0
      \end{matrix}\right]$
    is the exact basis element that is identified with 0. Thus one may already
    deduce that
    \begin{equation*}
      p_{61}^2+\Span_\field\{p_{61}^1\}\subset
      \todacc{\varepsilon_{11},i_{21},\pi_{22},p_{62}}.
    \end{equation*}
    We see that the indeterminacy is actually exactly
    \begin{equation*}
	    (\varepsilon_{11})_*\U_2(\Sigma^2 P_6, I_1) +
      (\Sigma^2p_{62})^*\U_2(\Sigma^2 P_2, \Sigma^2P_1) = \Span_\field\{p_{61}^1\}
    \end{equation*}
    since $\U_2(\Sigma^2 P_6, I_1)=0$ and $p_{21}p_{62}=p^1_{61}$. In
    conclusion,
    \begin{equation*}
	    \toda{\varepsilon_{11},i_{21},\pi_{22},p_{62}}=p_{61}^2+\Span_\field\{p_{61}^1\}.
    \end{equation*}

    Next, let us use Toda brackets to determine whether the diagram
    \begin{center}
	    \begin{tikzpicture}	
		    \diagram{d}{2em}{3em}{
          P_6 & P_1 & I_6 & I_1 & \Sigma^2 P_6\\ 
		    };
        
		    \path[->, font = \scriptsize, auto]
		    (d-1-1) edge node{$p_{61}^1$} (d-1-2)
		    (d-1-2) edge node{$\pi_{16}^2$} (d-1-3)
		    (d-1-3) edge node{$-i_{61}^1$} (d-1-4)
		    (d-1-4) edge node{$\varepsilon_{16}^1$} (d-1-5);
	    \end{tikzpicture}
    \end{center}
    is a $4$-angle. First, we check that the morphisms compose to the zero
    morphism. We may observe that $\pi_{12}^2\pi_{61}^1 = 0$ by restricting our
    attention to vertex 6, and similarly we may observe that
    $-i_{61}^1\pi_{12}^2 = 0$ by restricting our attention to vertex 1. Finally,
    we may observe that $\varepsilon_{16}^1 (-i_{61}^1)=0$ by considering the
    morphisms in $\htpy(\proj\A_2)$:
    \begin{center}
	    \begin{tikzpicture}	
		    \diagram{d}{2em}{4.75em}{
          I_6 & I_1 & \Sigma^2 P_6 \\[-.5em]
          P_6 & P_6^2  & P_6 \\
          P_2\oplus\cdots\oplus P_5 & P_2\oplus\cdots\oplus P_5  & 0\\
          P_1 & P_1 & 0.\\
		    };
        
		    \path[->, font = \scriptsize, auto]
		    (d-1-1) edge node{$-i_{61}^1$} (d-1-2)
		    (d-1-2) edge node{$\varepsilon_{16}^1$} (d-1-3)
        
		    (d-2-1) edge node{$\left[\begin{matrix}
              0\\-\lambda
		        \end{matrix}\right]$} (d-2-2)
		    (d-2-2) edge node{$\left[\begin{matrix}
              1 & 0
		        \end{matrix}\right]$} (d-2-3)
        
		    (d-3-1) edge node{$0\oplus1\oplus1\oplus1$} (d-3-2)
		    (d-3-2) edge (d-3-3)
        
		    (d-4-1) edge (d-4-2)
		    (d-4-2) edge (d-4-3)
        
		    (d-2-1) edge node[swap]{$d^{I_6}_2$} (d-3-1)
		    (d-3-1) edge node[swap]{$d^{I_6}_1$} (d-4-1)
        
		    (d-2-2) edge node[swap]{$d^{I_1}_2$} (d-3-2)
		    (d-3-2) edge node[swap]{$d^{I_1}_1$} (d-4-2)
        
		    (d-2-3) edge (d-3-3)
		    (d-3-3) edge (d-4-3);
	    \end{tikzpicture}
    \end{center}
    We actually have something stronger: we calculated above that
    $(\Sigma^2p_{61}^1)\varepsilon_{16}^1=0$, so this sequence might be Yoneda
    exact and therefore has a real chance of being a 4-angle.

    We will use that we have a $4$-angle extension of $p_{61}^1$; this can be
    calculated iteratively by first taking the cokernel of the morphism in
    question in $\A_2$ and then finding a left $\U_2$-approximation. This yields
    the $4$-angle
    \begin{center}
	    \begin{tikzpicture}	
		    \diagram{d}{2em}{5em}{
			    P_6 & P_1 & I_2\oplus I_6 & I_1\oplus I_2 & \Sigma^2 P_6\\
		    };
        
		    \path[->, font = \scriptsize, auto]
		    (d-1-1) edge node{$p_{61}^1$} (d-1-2)
		    (d-1-2) edge node{$\left[\begin{matrix}
              \pi_{12}\\\pi_{16}^2
		        \end{matrix}\right]$} (d-1-3)
		    (d-1-3) edge node{$\left[\begin{matrix}
              i_{21} & -i^2_{61} \\ 0 & i_{62}
		        \end{matrix}\right]$} (d-1-4)
		    (d-1-4) edge node{$\left[\begin{matrix}
              \varepsilon_{16}^1 & \frac1\lambda\varepsilon_{26}
		        \end{matrix}\right]$} (d-1-5)
        ;
	    \end{tikzpicture}
    \end{center}
    which allows us to compare the diagram in question to calculate the Toda
    bracket $\todacc{\varphi_{16}^1,-i_{61}^1,\pi_{16}^2,p_{61}^1}$, i.e.,
    
    \begin{center}
	    \begin{tikzpicture}	
		    \diagram{d}{2em}{5em}{
			    P_6 & P_1 & I_2\oplus I_6 & I_1\oplus I_2 & \Sigma^2 P_6\\
          P_6 & P_1 & I_6 & I_1 & \Sigma^2 P_6.\\ 
		    };
        
		    \path[->, font = \scriptsize, auto]
		    (d-1-1) edge node{$p_{61}^1$} (d-1-2)
		    (d-1-2) edge node{$\left[\begin{matrix}
              \pi_{12}\\\pi_{16}^2
		        \end{matrix}\right]$} (d-1-3)
		    (d-1-3) edge node{$\left[\begin{matrix}
              i_{21} & -i^2_{61} \\ 0 & i_{62}
		        \end{matrix}\right]$} (d-1-4)
		    (d-1-4) edge node{$\left[\begin{matrix}
              \varepsilon_{16}^1 & \frac1\lambda\varepsilon_{26}
		        \end{matrix}\right]$} (d-1-5)
        
		    (d-2-1) edge node{$p_{61}^1$} (d-2-2)
		    (d-2-2) edge node{$\pi_{16}^2$} (d-2-3)
		    (d-2-3) edge node{$-i_{61}^1$} (d-2-4)
		    (d-2-4) edge node{$\varepsilon_{16}^1$} (d-2-5)
        
        \downequal{d}{1-1}{2-1}
        \downequal{d}{1-2}{2-2}
		    (d-1-3) edge[densely dashed] node{$\varphi_3$} (d-2-3)
		    (d-1-4) edge[densely dashed] node{$\varphi_4$} (d-2-4)
		    (d-1-5) edge[densely dashed] node{$\psi$} (d-2-5);
	    \end{tikzpicture}
    \end{center}
    It is easily checked that
    \begin{equation*}
	    \varphi_3=\left[\begin{matrix}
	        0 & 1
        \end{matrix}\right]
	    \quad\text{and}\quad
	    \varphi_3=\left[\begin{matrix}
	        0 & -i_{21}
        \end{matrix}\right]
    \end{equation*}
    are valid choices, since $i_{21}i_{62}=i_{61}^1$. We know from the previous
    example that $\varepsilon_{16}^1 i_{21}=0$, so may thus conclude that
    \begin{equation*}
	    0\in \todacc{\varphi_{16}^1,-i_{61}^1,\pi_{16}^2,p_{61}^1}.
    \end{equation*}
    Let us now consider the indeterminacy. We know that
    \begin{equation*}
	    \U_2(\Sigma^2P_6, I_1)=0 \quad\text{and}\quad \U_2(\Sigma^2P_1, \Sigma^2P_6)
      = 0
    \end{equation*}
    so the indeterminacy is trivial, and thus we have
    \begin{equation*}
	    \todacc{\varphi_{16}^1,-i_{61}^1,\pi_{16}^2,p_{61}^1}=\{0\}.
    \end{equation*}
    It follows that
    $1\not\in\todacc{\varphi_{16}^1,-i_{61}^1,\pi_{16}^2,p_{61}^1}$, and so by
    \autoref{prop:NAngleCriterion}, the diagram in question is not a $4$-angle.
\end{example}

%
%
\section{Long bracket versus higher order bracket}\label{sec:LongerHigher}

Assume that $\C$ is an $n$-angulated category coming from an $(n-2)$-cluster
tilting subcategory of a triangulated category $\T$ with suspension functor
$\Sigma$ as in \cite[Theorem~1]{GKO}.

In the triangulated category $\T$, one may calculate the \emph{$n$-fold Toda
  bracket} of a diagram of the shape of \autoref{eq:seq} by applying an
inductive construction to the fiber-cofiber Toda bracket
$\todafc{f_{n-2},f_{n-1},f_n}$. The question then arises of whether a relation
exists between the $n$-fold Toda bracket and the Toda bracket of the
$n$-angulated subcategory $\C$. To address this question we will use a
definition of $n$-fold Toda bracket due to Shipley \cite[Definition
A.2]{Shipley02} and developed by Sagave \cite{Sagave08}. We most closely follow
the notational conventions of \cite{CF}.

\begin{definition}\label{def:SSbracket}
  For $k\geq2$ and a diagram 
  \begin{center}
    \begin{tikzpicture}    
      \diagram{d}{2em}{3em}{
        X_1 & X_2 & X_3 & \cdots & X_k & X_{k+1}\\ 
      };
      
      \path[->, font = \scriptsize, auto]
      (d-1-1) edge node{$f_1$} (d-1-2)
      (d-1-2) edge node{$f_2$} (d-1-3)
      (d-1-3) edge node{$f_3$} (d-1-4)
      (d-1-4) edge node{$f_{k-1}$} (d-1-5)
      (d-1-5) edge node{$f_k$} (d-1-6);
    \end{tikzpicture}
  \end{center}
  we consider a $(k-1)$-filtered object $X$ based on a subdiagram consisting of
  the morphisms $(f_{k - 1},\dots,f_3,f_2)$, cf.\ \cite[Definition 5.4]{CF}
  (with $n = k - 1$ and $Y_i = X_{i + 2}$), so that we have a diagram
  \begin{center}
    \begin{tikzpicture}    
      \diagram{d}{2em}{3em}{
        0=F_0X & F_1X & F_2X & \cdots & F_{k-1}X=X\\ 
      };
      
      \path[->, font = \scriptsize, auto]
      (d-1-1) edge node{$i_0$} (d-1-2)
      (d-1-2) edge node{$i_1$} (d-1-3)
      (d-1-3) edge node{$i_2$} (d-1-4)
      (d-1-4) edge node{$i_{k-2}$} (d-1-5);
    \end{tikzpicture}
  \end{center}
  along with the following data:    
  \begin{enumerate}[label=(\arabic*), ref={Definition
      \thedefinition.(\arabic*)}]
  \item For each $0\leq j\leq k-2$ a distinguished triangle
    \begin{center}
      \begin{tikzpicture}    
        \diagram{d}{2em}{3em}{
          F_jX & F_{j+1}X & \Sigma^j X_{k-j} & \Sigma F_jX.\\ 
        };
        
        \path[->, font = \scriptsize, auto]
        (d-1-1) edge node{$i_j$} (d-1-2)
        (d-1-2) edge node{$q_{j+1}$} (d-1-3) 
        (d-1-3) edge node{$e_j$} (d-1-4);
      \end{tikzpicture}
    \end{center}
  \item \label{def:SSbracket_2} For each $1\leq j\leq k-2$ a commutative diagram
    \begin{center}
      \begin{tikzpicture}    
        \diagram{d}{2em}{1em}{
          \Sigma^j X_{k-j} & & \Sigma^j X_{k + 1 - j}\\ 
          & \Sigma F_jX.\\
        };
        
        \path[->, font = \scriptsize, auto]
        (d-1-1) edge node{$\Sigma^jf_{k - j}$} (d-1-3)
        (d-1-1) edge node[swap]{$e_j$} (d-2-2)
        (d-2-2) edge node[swap]{$\Sigma q_j$} (d-1-3);
      \end{tikzpicture}
    \end{center}
  \end{enumerate}
  The \Def{$k$-fold Toda bracket} $\todaSS{f_k,\dots,f_2,f_1}\subseteq
  \T(\Sigma^{k-2}X_1,X_{k+1})$ in the sense of Shipley--Sagave is defined to be
  all composites $\mu(\Sigma^{k-2}\nu)$ occuring in the commutative diagram
  \begin{equation}\label{eq:ss}
    \begin{aligned}
      \begin{tikzpicture}    
        \diagram{d}{2em}{3em}{
          & X_{k}\\
          \Sigma^{k-2}X_1 & X & X_{k+1}\\
          & \Sigma^{k-2}X_2 \\ 
        };
        
        \path[->, font = \scriptsize, auto]
        (d-1-2) edge node{$\sigma_X$} (d-2-2)
        (d-1-2) edge node{$f_k$} (d-2-3)
        (d-2-1) edge[densely dashed] node{$\Sigma^{k-2}\nu$} (d-2-2)
        (d-2-1) edge node[swap]{$\Sigma^{k-2}f_1$} (d-3-2)
        (d-2-2) edge node{$q_{k-1}$} (d-3-2)
        (d-2-2) edge[densely dashed] node{$\mu$} (d-2-3);
      \end{tikzpicture}
    \end{aligned}
  \end{equation}
  where $\sigma_X$ denotes the composition of the inverse isomorphism $q_1^{-1}$
  with the composite of the filtration $i_{k-2}\cdots i_2 i_1$.
\end{definition}

\begin{remark}
  Note that all objects are 1-filtered based on the empty diagram since
  \ref{def:SSbracket_2} is vacuous in that case.
\end{remark}

\begin{theorem}\label{thm:LongerHigher}
  Let $\T$ be a triangulated category with an $(n-2)$-cluster tilting
  subcategory $\C$ closed under $\Sigma^{n-2}$ and let
  \begin{center}
    \begin{tikzpicture}    
      \diagram{d}{2em}{3em}{
        X_1 & X_2 & X_3 & \cdots & X_n & X_{n+1}\\ 
      };
      
      \path[->, font = \scriptsize, auto]
      (d-1-1) edge node{$f_1$} (d-1-2)
      (d-1-2) edge node{$f_2$} (d-1-3)
      (d-1-3) edge node{$f_3$} (d-1-4)
      (d-1-4) edge node{$f_{n-1}$} (d-1-5)
      (d-1-5) edge node{$f_n$} (d-1-6)
      ;
    \end{tikzpicture}
  \end{center}
  be a diagram in $\C$ such that $f_{i+1}f_i=0$ for $1\leq i\leq n-1$. Then
  \begin{equation*}
    \toda{f_n,\dots,f_2,f_1}=(-1)^{\sum_{\ell=1}^{n-3} \ell}\todaSS{f_n,\dots,f_2,f_1}
  \end{equation*}
  where $\toda{f_n,\dots,f_2,f_1}$ denotes the Toda bracket in the $n$-angulated
  category $\C$.
\end{theorem}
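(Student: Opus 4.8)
The plan is to compute $\toda{f_n,\dots,f_2,f_1}$ on the left via the fiber--cofiber description of \autoref{def:TodaNang} --- which, like the Shipley--Sagave construction, is organized around the middle morphisms $f_2,\dots,f_{n-1}$ --- and to identify it with $\todaSS{f_n,\dots,f_2,f_1}$ by unfolding the $n$-angles occurring in the staircase of \autoref{figure:fcbracket} into distinguished triangles of $\T$ using the construction of \cite[Theorem~1, Example~6.1]{GKO}. An essentially equivalent route goes through the iterated cofiber bracket together with the comparison, in \cite{CF}, of the filtered-object bracket with iterated fiber--cofiber brackets in $\T$; the fiber--cofiber route is slightly more direct here because it exposes the Shipley--Sagave input from the outset.

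First, since $f_{i+1}f_i=0$ for all $i$, \autoref{thm:main} identifies $\toda{f_n,\dots,f_2,f_1}$ with $\todafc{f_n,\dots,f_2,f_1}$, so an element $\psi$ is exhibited by a diagram of the shape of \autoref{figure:fcbracket} with $n$-angles $Z_\bullet^2,\dots,Z_\bullet^{n-1}$ (with $f_i$ the $i^{\text{th}}$ morphism of $Z_\bullet^i$) and $\psi=\Sigma(\beta_1^{n-1}\cdots\beta_1^2\beta_1^1)$. Next, unfold each $Z_\bullet^i$ via \cite{GKO} into $n-2$ distinguished triangles in $\T$ whose third objects are the successive ``partial cones'', and do so compatibly along the morphisms of $n$-angles $\beta^i_\bullet\colon Z_\bullet^i\to Z_\bullet^{i+1}$ from the staircase. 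The first-column maps $X_1\xrightarrow{\beta_1^1}Z_1^2\to\cdots\to Z_1^{n-1}\xrightarrow{\beta_1^{n-1}}\Sigma^{-1}X_{n+1}$, together with the triangles their sources and targets sit in and the commutativity of the staircase squares, then assemble precisely into the data required by \autoref{def:SSbracket}. Namely: an $(n-1)$-filtered object $X$ based on the subdiagram $(f_{n-1},\dots,f_3,f_2)$, with $F_1X=X_n$, with each $F_jX$ for $2\le j\le n-1$ a suspension shift of $Z_1^{n+1-j}$, and with $F_{n-1}X=X$ the iterated cone of $X_2\to X_3\to\cdots\to X_n$; the distinguished triangles $F_jX\to F_{j+1}X\xrightarrow{q_{j+1}}\Sigma^jX_{n-j}\xrightarrow{e_j}\Sigma F_jX$; and the compatibility squares of \ref{def:SSbracket_2}, which record that $f_i$ appears as the $i^{\text{th}}$ morphism of $Z_\bullet^i$.

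With the filtered object in hand, the remaining corners of the staircase do the rest: the top-left square ($X_1\xrightarrow{f_1}X_2$ against $\beta_1^1$) unfolds to a morphism $\Sigma^{n-2}\nu\colon\Sigma^{n-2}X_1\to X$ with $q_{n-1}\circ\Sigma^{n-2}\nu=\Sigma^{n-2}f_1$, and the bottom-right square ($X_n\xrightarrow{f_n}X_{n+1}$ against $\Sigma\beta_1^{n-1}$) unfolds to a morphism $\mu\colon X\to X_{n+1}$ with $\mu\circ\sigma_X=f_n$, so that $\mu\circ\Sigma^{n-2}\nu\in\todaSS{f_n,\dots,f_2,f_1}$; comparing the two composites gives $\psi=(-1)^{\sum_{\ell=1}^{n-3}\ell}\,\mu\circ\Sigma^{n-2}\nu$. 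Conversely, any Shipley--Sagave datum refolds into a valid fiber--cofiber staircase with the same sign relation, so both inclusions hold and the asserted equality of subsets of $\C(\Sigma^{n-2}X_1,X_{n+1})=\T(\Sigma^{n-2}X_1,X_{n+1})$ follows. As a check, for $n=3$ one has $\C=\T$, the sign is $(-1)^0=1$, and both sides reduce to the ordinary triangulated Toda bracket $\todafc{f_3,f_2,f_1}$.

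The main obstacle is the sign. It arises from bookkeeping the suspension shifts introduced by the $n-2$ GKO unfoldings against the single rotation sign $(-1)^n$ of an $n$-$\Sigma$-sequence (axioms \ref{N2} and \ref{N3}) that glues the triangles together, relative to the triangulated rotation conventions built into \autoref{def:SSbracket}: at the $\ell^{\text{th}}$ level of the filtration one must commute $\Sigma^\ell$ past an $\ell$-fold suspended attaching map, which costs a factor $(-1)^\ell$, and these accumulate to $(-1)^{\sum_{\ell=1}^{n-3}\ell}=(-1)^{\binom{n-2}{2}}$. I expect the cleanest way to make this precise, and to avoid a combinatorial morass, is induction on $n$: verify $n=3$ (sign $+1$) and $n=4$ (sign $-1$) directly, then isolate the one extra triangle and its rotation added in passing from the $n$-case to the $(n+1)$-case, which contributes exactly the factor $(-1)^{n-2}=(-1)^{\binom{n-1}{2}-\binom{n-2}{2}}$.
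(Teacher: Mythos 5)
Your forward direction follows the same general route as the paper (fiber--cofiber staircase, GKO unfolding into triangles of $\T$, assembly of an $(n-1)$-filtered object), but the central step is asserted rather than proved, and it is exactly the step where the $(n-2)$-cluster-tilting hypothesis does real work. The staircase data do not ``assemble precisely'' into the data of \autoref{def:SSbracket}: the filtration based on $(f_{n-1},\dots,f_2)$ has pieces $F_jX$ that are iterated cones living in $\T$ --- in particular your identification of $F_jX$ with a suspension shift of $Z_1^{n+1-j}$ is false ($F_2X$ is the cone of $f_{n-1}$, not a shift of any first-column object of the staircase) --- and one must construct, recursively via (TR3), comparison morphisms $\theta_j,\theta_j'$ between these filtration pieces and the partial cones of the trapezoid presentations of the $n$-angles $Z^i_\bullet$, and then verify that the resulting squares commute. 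In the paper this is the bulk of the proof (the diagrams $\Dd_j$), and the commutativity needed to obtain $\theta_j'$ is established by an obstruction-chasing argument whose inputs are precisely the cluster-tilting vanishing conditions such as $\C(X_m,\Sigma^{-1}Z^{n-1}_{m+2})=0$ and $\C(\Sigma^{k-1}X_m,X_n)=0$; your proposal never invokes these, so the factorizations $\Sigma^jf_{n-j}=(\Sigma q_j)e_j$ required by \ref{def:SSbracket_2}, and hence the morphism $\mu$, are not actually produced.

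Second, your converse direction (``any Shipley--Sagave datum refolds into a valid fiber--cofiber staircase with the same sign'') is a bare assertion, and proving it directly would be at least as hard as the forward direction, since one would have to manufacture $n$-angles in $\C$ from triangulated filtration data. The paper avoids this entirely: by \autoref{prop:brackets_are_cosets} and Sagave's result that $\todaSS{f_n,\dots,f_2,f_1}$ is a coset of the same indeterminacy subgroup, it suffices to exhibit a single common element of the two cosets, which the forward construction delivers (the element is placed in the $n$-angulated bracket via the $[n-1]$-intermediate bracket of \autoref{sec:intermediate}). You should incorporate this coset reduction; without it the second inclusion is an unfilled gap. Your heuristic for the sign (a factor $(-1)^\ell$ per filtration level from rotation/suspension bookkeeping) is in the right spirit --- in the paper it appears as the signs $(-1)^{j-1}$ in the rotated trapezoid triangles, accumulating through the relation $(\Sigma^j\theta_j')i_j=(-1)^{j-1}\Sigma^{j-1}(a^{n-1}_{n-j+1}\theta_{j-1}')$ --- but as written it is a plausibility argument, and the proposed induction on $n$ is not carried out.
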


\begin{remark}\label{rem:LongerHigher}
  \mbox{}
  \begin{enumerate}
  \item Note that if $f_{i+1}f_i\neq 0$ for some $i$ then the $n$-fold Toda
    bracket is empty, and so in this case the $n$-fold Toda bracket agrees with
    the fiber-cofiber Toda bracket of the $n$-angulated structure.
  \item In light of \autoref{thm:LongerHigher}, \autoref{prop:NAngleCriterion}
    provides an alternate proof of \cite[Theorem~4.1.12]{JM} for standard
    $n$-angulated categories in the sense of \cite[Definition~2.2.2]{JM}.
  \item In \cite[Theorem~5.11]{CF}, the $(n-2)!$ ways of recursively computing
    an $n$-fold Toda bracket were shown to agree up to an explicit sign. The
    standard definition $\todaSS{f_n,\dots,f_2,f_1}$, denoted $\TT_0 \TT_0
    \cdots \TT_0$ in \emph{op.\ cit.}\ starts the recursive process with the
    morphisms $f_{n-2}, f_{n-1}$ and $f_n$. The other extreme choice, which we
    denote by $\todaSS{f_n,\dots,f_2,f_1}'\subset \T(\Sigma^{n-2}X_1,X_{n+1})$,
    written $\TT_0 \TT_1 \TT_2 \cdots \TT_{n-3}$ in \emph{op.\ cit.}, starts
    with the morphisms $f_1, f_2$ and $f_3$. Combining \cite[Theorem~5.11]{CF}
    with \autoref{thm:LongerHigher}, we obtain
    \begin{equation*}
      \toda{f_n,\dots,f_2,f_1}=(-1)^{\sum_{\ell=1}^{n-3}\ell}\todaSS{f_n,\dots,f_2,f_1}
      = \todaSS{f_n,\dots,f_2,f_1}'.
    \end{equation*}
  \end{enumerate}
\end{remark}

\begin{proof}[Proof of \autoref{thm:LongerHigher}]  
  By the assumption that $\C$ is $(n-2)$-cluster tilting, \cite[Proposition
  4.10]{Sagave08} implies that the $n$-fold Toda bracket is a coset of the
  subgroup
  \begin{equation*}
    (f_n)_*\C(\Sigma^{n-2}X_1,X_{n})+(\Sigma f_1)^*\C(\Sigma^{n-2}X_2,X_{n+1})
    \subseteq \C(\Sigma^{n-2}X_1,X_{n+1}).
  \end{equation*}
  Since the diagram is in the full subcategory $\C$ and $f_{i+1}f_i=0$, the Toda
  bracket of the \mbox{$n$-angulated} category is defined and is also a coset of
  the preceding subgroup. Therefore it suffices to exhibit a common element of
  the two cosets.

  We now assume $n \geq 4$ for the remainder of the proof. The statement holds
  in the case $n = 3$: All triangulated categories are themselves 1-cluster
  tilting and $3$-angulated Toda brackets agree with triangulated Toda brackets.

  \textbf{Outline of argument.} We consider an element
  $\psi\in\toda{f_n,\dots,f_2,f_1}$ (which exists by the assumption
  $f_{i+1}f_i=0$) and modify the data this provides to construct a new morphism
  $\tilde\psi\colon\Sigma^{n-2}X_1\to X_{n+1}$ which {we then exhibit as an
    element of both the cosets we are comparing.}

  \textbf{Setup.} The element $\psi$ has a presentation as the composition
  $\Sigma^{n-2}(\beta_1^{n-1} \cdots \beta_1^2 \beta_1^1)$ sitting in a diagram
  of the shape of \autoref{figure:fcbracket}. The particular type of
  $n$-angulation provides further data: All the rows in
  \autoref{figure:fcbracket} are $n$-angles and therefore there exist diagrams
  in $\T$ of the shape
  \begin{center}
    \begin{tikzpicture}
      \diagram{d}{2em}{1.5em}{
        & Z_2^i && Z^i_3 && \cdots &&& Z^i_{n - 1} &\\
        Z_1^i && A^i_{3} && A^i_{4} & \cdots && A^i_{n-1} && Z^i_n\\
      };
      
      \path[->, font = \scriptsize, auto]
      (d-1-2) edge node{$z^i_2$} (d-1-4)
              edge node{$c^i_2$} (d-2-3)
      (d-1-4) edge node{$z^i_3$} (d-1-6)
              edge node{$c^i_3$} (d-2-5)
      (d-1-6) edge node{$z^i_{n - 2}$} (d-1-9)
      (d-1-9) edge node{$z^i_{n - 1}$} (d-2-10)
      
      (d-2-1) edge node{$z^i_1$} (d-1-2)
      (d-2-3) edge node{$b^i_{2}$} (d-1-4)
              edge[suspension,->] node{$a_3^i$} (d-2-1)
      (d-2-5) edge[suspension,->] node{$a_4^i$} (d-2-3)
      (d-2-6) edge[suspension,->] node{$a_5^i$} (d-2-5)
      (d-2-8) edge node{$b^i_{n-2}$} (d-1-9)
              edge[suspension,->] node{$a_{n-1}^i$} (d-2-6)
      (d-2-10) edge[suspension,->] node{$a_{n}^i$} (d-2-8);
    \end{tikzpicture}
  \end{center}
  {with $Z_j^i\in\C$ and} such that $z^i_i\colon Z_i^i\to Z_{i+1}^i$ denotes the
  morphism $f_i\colon X_i\to X_{i+1}$. For each morphism of $n$-angles
  $\beta^i_\bullet\colon Z^i_\bullet\to Z^{i+1}_\bullet$ in
  \autoref{figure:fcbracket} there exist compatible morphisms $\alpha_j^i\colon
  A_j^i\to A_j^{i+1}$ coming from inductively applying the morphism axiom for
  triangulated categories, (TR3), see for instance \cite{Martensen}.

  \textbf{Filtration.} We now build an $(n-1)$-filtered object $X$ based on
  $(f_{n-1},\dots,f_3,f_2)$: It follows from the existence of the trivial
  distinguished triangle $(TX_n)_\bullet[-1]$ that we may take the first
  non-trivial step of the filtration to be $F_1X=X_n$ and $q_1=1_{X_n}$. Hence,
  $F_1 X$ is a $1$-filtered object based on the empty diagram. To satisfy
  \ref{def:SSbracket_2} for $j=1$, we further define $e_1=\Sigma f_{n-1}$. From
  the trapezoid diagram presentation of the $n$-angle {$Z^{n-1}_\bullet$}, we
  are provided a distinguished triangle
  \begin{center}
    \begin{tikzpicture}    
      \diagram{d}{2em}{3em}{
        X_n & \Sigma A_{n-1}^{n-1} & \Sigma X_{n-1} & \Sigma X_{n}\\ 
      };
      
      \path[->, font = \scriptsize, auto]
      (d-1-1) edge node{$a^{n-1}_n$} (d-1-2)
      (d-1-2) edge node{$\Sigma b_{n-2}^{n-1}$} (d-1-3)
      (d-1-3) edge node{$\Sigma f_{n-1}$} (d-1-4);
    \end{tikzpicture}
  \end{center}
  and so {choosing} $F_2X=\Sigma A_{n-1}^{n-1}$, $i_1=a_n^{n-1}$ and $q_2=\Sigma
  b^{n-1}_{n-2}$ makes $F_2X$ a $2$-filtered object based on $(f_{n-1})$. We now
  build the rest of the filtration recursively. We have distinguished triangles
  \begin{center}
    \begin{tikzpicture}    
      \diagram{d}{2em}{6em}{
        \Sigma^{j-1} A^m_{m+1} & \Sigma^j A^m_m & \Sigma^j X_m & \Sigma^j
        A^m_{m+1}\\ 
      };
      
      \path[->, font = \scriptsize, auto]
      (d-1-1) edge node{$(-1)^{j-1}\Sigma^{j-1}a^m_{m+1}$} (d-1-2)
      (d-1-2) edge node{$\Sigma^jb^m_{m-1}$} (d-1-3)
      (d-1-3) edge node{$\Sigma^jc^m_{m}$} (d-1-4);
    \end{tikzpicture}
  \end{center}
  where $m=n-j$, coming from appropriate rotations of the distinguished
  triangles provided by the trapezoid diagrams for each $j$. Let $1\leq j\leq
  n-2$. We now define a sequence of diagrams, $\Dd_j$, recursively for each $j$,
  see \autoref{fig:induction}.
  \begin{figure}
    \centering
    \begin{tikzpicture}    
      \diagram{d}{3em}{6em}{
        & \Sigma^j X_{m-1} & \Sigma^j X_m\\
        & \Sigma^j Z^m_{m-1}\\
        \Sigma^{j-1} A_{m+1}^{m} & \Sigma^j A_m^m & \Sigma^j X_{m} & \Sigma^j
        A_{m+1}^{m}\\
        F_jX & F_{j+1}X & \Sigma^j X_m & \Sigma F_jX\\
        \Sigma^{j-1} A_{m+1}^{n-1} & \Sigma^j A_m^{n-1} & \Sigma^j Z^{n-1}_m &
        \Sigma^j A_{m+1}^{n-1}\\
      };
      
      \path[->, font = \scriptsize, auto]
      (d-1-2) edge node{$\Sigma^j f_{m-1}$} (d-1-3)
      (d-1-2) edge[left] node{$\Sigma^j\beta^{m-1}_{m-1}$} (d-2-2)
      \downequal{d}{1-3}{3-3}
      (d-2-2) edge[left, pos=.4] node{$\Sigma^j c_{m-1}^m$} (d-3-2)
      (d-2-2) edge node{$\Sigma^j z^m_{m-1}$} (d-3-3)
      
      (d-3-1) edge node{$(-1)^{j-1}\Sigma^{j-1}a_{m+1}^m$} (d-3-2)
      (d-3-2) edge node{$\Sigma^j b_{m-1}^m$} (d-3-3)
      (d-3-3) edge node{$\Sigma^j c^m_m$} (d-3-4)
      
      (d-4-1) edge node{$i_j$} (d-4-2)
      (d-4-2) edge node{$q_{j+1}$} (d-4-3)
      (d-4-3) edge node{$e_j$} (d-4-4)
      
      (d-5-1) edge node{$(-1)^{j-1}\Sigma^{j-1}a^{n-1}_{m+1}$} (d-5-2)
      (d-5-2) edge node{$\Sigma^jb^{n-1}_{m-1}$} (d-5-3)
      (d-5-3) edge node{$\Sigma^j c^{n-1}_m$} (d-5-4);
      
      \path[->, font = \scriptsize, midway]
      (d-3-1) edge node[inner sep=1pt,fill=white]{$\Sigma^{j-1}(\theta_{j-1}
        \alpha^{m}_{m+1})$} (d-4-1)
      (d-3-2) edge[densely dashed,left] node{$\Sigma^{j}\theta_j$} (d-4-2)
      \downequal{d}{3-3}{4-3}
      (d-3-4) edge node[inner sep=1pt, fill=white]{$\Sigma^j(\theta_{j-1}
        \alpha^m_{m+1})$} (d-4-4)
      
      (d-4-1) edge[left] node{$\Sigma^{j-1}\theta_{j-1}'$} (d-5-1)
      (d-4-2) edge[densely dashed,left] node{$\Sigma^{j}\theta_j'$} (d-5-2)
      (d-4-3) edge node[inner sep=1pt, fill=white]{$\Sigma^j(\beta^{n-2}_m\cdots
        \beta^m_m)$} (d-5-3)
      (d-4-4) edge[left] node{$\Sigma^j\theta_{j-1}'$} (d-5-4)
      
      (d-1-2) edge[line width=4pt, white,out=-50,in=50] (d-4-2)
      (d-1-2) edge[right,densely dotted,out=-50,in=50, pos=.15] node{$\Sigma^{-1}
        e_{j+1}$} (d-4-2)

      (d-3-3) -- (d-4-4) node[pos=.5,xshift=0em,font=\small] (a) {$\Sigma^j
        \square^1_j$}
      (d-4-3) -- (d-5-4) node[pos=.5,xshift=0em,font=\small] (b) {$\Sigma^j
        \square_j^2$};
    \end{tikzpicture}
    \caption{The solid part of the diagram depicts $\Dd_j$ for some $j$.}
    \label{fig:induction}
  \end{figure}
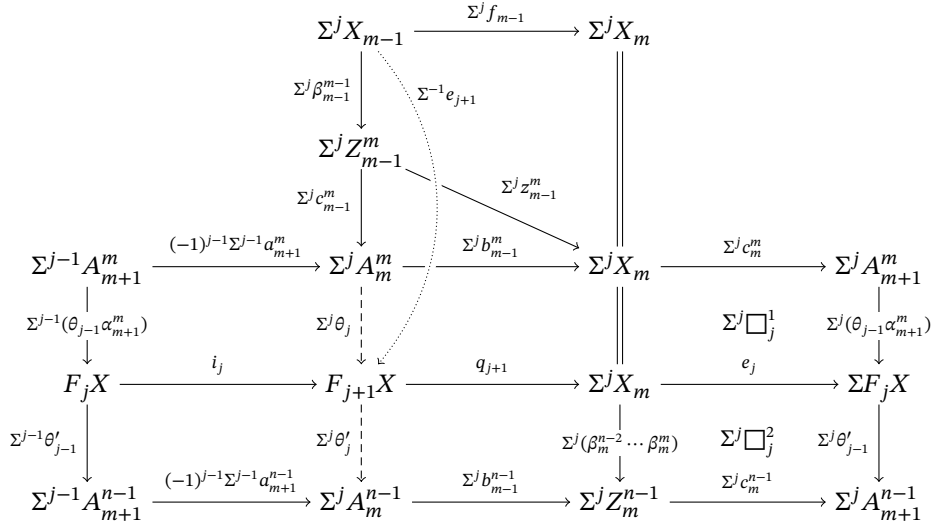
  Note that the existence of $\Dd_j$ is dependent on $\Dd_{j-1}$ being
  commutative such that one may apply the morphism axiom of triangulated
  categories, (TR3), to define $\theta_{j-1}$ and $\theta_{j-1}'$. For $\Dd_1$
  the three bottom rows are taken to be identical and all vertical morphisms
  between these rows are chosen to be identities, i.e., $\theta_1=\theta_1'=1_{
    A_{n-1}^{n-1}}$. We see that $\Dd_2$ also commutes and therefore that
  $\theta_2$ and $\theta_2'$ exist: the square $\Sigma^2\square_2^2$ commutes
  since, by construction, $\alpha^{n-2}_{n-1}$ satisfies the equation
  \begin{equation*}
    \Sigma^2(\theta_{1}'e_2)
    =\Sigma^2(\theta_1'\theta_1\alpha^{n-2}_{n-1}c^{n-2}_{n-2})
    =\Sigma^2(\alpha^{n-2}_{n-1}c^{n-2}_{n-2})
    =\Sigma^2 (c^{n-1}_{n-2}\beta^{n-2}_{n-2})
  \end{equation*}
  and $\Sigma^2\square_2^1$ commutes by the same compatibility relation. In
  fact, $\Sigma^j\square_j^1$ commutes for all $j$ by the compatibility of
  $\alpha^m_{m+1}$, i.e.,
  \begin{equation*}
    \Sigma^{j}(\theta_{j-1}\alpha^m_{m+1}c^m_m)=
    \Sigma\left(\Sigma^{j-1}(\theta_{j-1}c^{m+1}_m\beta^m_m)\right)=e_j.
  \end{equation*}
  Let $\widetilde\Dd_j$ denote the subdiagram of $\Dd_j$ that excludes the
  bottom row and the maps that target it. We have now established that all
  $\widetilde\Dd_j$ exist and are commutative. We see that for each $j$ we now
  have the desired distinguished triangles, and the commutativity of the
  vertical middle rectangle of $\widetilde\Dd_j$ provides the desired
  factorizations
  \begin{equation*}
    \Sigma^{j+1}f_{n-(j+1)}=\Sigma^{j+1}f_{m-1}=(\Sigma q_{j+1})e_{j+1}.
  \end{equation*}
  This means that setting $j=n-2$, this process produces an $(n-1)$-filtered
  object $X\coloneq F_{n-1}X$ based on $(f_{n-1},\dots,f_3,f_2)$ along with the
  important relation
  \begin{equation*}
    \Sigma^{n-2}f_{1}=q_{n-1}(\Sigma^{-1}e_{n-1})=q_{n-1}\Sigma^{n-2}(\theta_{n-2}c^{2}_1
    \beta^1_1).
  \end{equation*}

  \textbf{Commutativity of $\Dd_j$.} Now let us see that the full diagrams
  $\Dd_j$ and, therefore, the morphisms $\theta_j'$ exist for $j\geq 3$. To this
  end, we apply induction:
    
  \emph{Base case.} The morphisms $\theta_1'$ and $\theta_2'$ exist since
  $\Dd_1$ and $\Dd_2$ are defined and commutative.
    
  \emph{Induction step.} Fix $2\leq k\leq n-2$. We assume that $\Dd_j$ exists
  and commutes for $1\leq j\leq k-1$ such that we have the morphisms
  $\theta_j'$. We must prove that $\Sigma^k\square_k^2$ commutes in order to
  show that $\theta_k'$ exists. To this end, we note that $b^{n-1}_m$ factors
  through the coequalizer of $\theta_{k-1}'\theta_{k-1}$ and
  $\alpha^{n-2}_{m+1}\cdots\alpha^{m+1}_{m+1}$, i.e.,
  \begin{equation*}
    b^{n-1}_m\theta_{k-1}'\theta_{k-1}=\beta^{n-2}_{m+1}\cdots\beta^{m+1}_{m+1}b^{m+1}_m
    =b^{n-1}_m\alpha^{n-2}_{m+1}\cdots\alpha^{m+1}_{m+1}
  \end{equation*}
  by the induction hypothesis. Therefore the square $\square_k^2$, which we
  denote as an equation
  \begin{equation*}
    \gamma_0\coloneq c_{m}^{n-1}\beta^{n-2}_m\cdots\beta^{m+1}_m\beta^m_m -
    \theta_{k-1}'\Sigma^{-k}e_k,
  \end{equation*}
  commutes from the point of view of $b^{n-1}_m$, i.e.,
  \begin{align*}
    b^{n-1}_m\circ\gamma_0
    &=b^{n-1}_{m}(c_{m}^{n-1}\beta^{n-2}_m\cdots\beta^{m+1}_m\beta^m_m
      -\theta_{k-1}'\Sigma^{-k}e_k)\\
    & =b^{n-1}_{m}(\alpha^{n-2}_{m+1}\cdots\alpha^{m+1}_{m+1}
      \alpha^m_{m+1} c_m^m-\theta_{k-1}'\theta_{k-1}\alpha^m_{m+1}
      c_m^m)=0.
  \end{align*}
  Thus we may factor $\gamma_0$ through the fiber of $b^{n-1}_m$ via a morphism
  $\gamma_1\colon X_m\to \Sigma^{-1}A^{n-1}_{m+2}$, meaning that we have a
  factorization
  \begin{equation*}
    \gamma_0 = (\Sigma^{-1}a^{n-1}_{m+2})\gamma_1.
  \end{equation*}
  We note that $(\Sigma^{-1}b_{m+1}^{n-1})\gamma_1\in
  \C(X_m,\Sigma^{-1}Z^{n-1}_{m+2}) = 0$ by the $(n-2)$-cluster tilting condition
  since $X_m$ and $Z^{n-1}_{m+2}$ are objects in $\C$, so we may lift $\gamma_1$
  to the fiber of $\Sigma^{-1}b^{n-1}_{m+1}$ via a morphism $\gamma_2\colon
  X_m\to \Sigma^{-2}A_{m+3}^{n-1}$. Repeating the argument, we note that
  $(\Sigma^{-2}b_{m+2}^{n-1})\gamma_2=0$, so we may factor $\gamma_2$ through
  the fiber of $\Sigma^{-2}b^{n-1}_{m+2}$. We may continue this process to
  obtain a diagram
  \begin{center}
    \begin{tikzpicture}    
      \diagram{d}{2em}{4em}{
        \Sigma^k X_m & \\
        \Sigma^k A^{n-1}_{m+1} & \Sigma^{k-1} A^{n-1}_{m+2} & \cdots & \Sigma^2
        A^{n-1}_{n-1} & \Sigma X_n \\
        \Sigma^k Z^{n-1}_{m+1} & \Sigma^{k-1} Z^{n-1}_{m+2} && \Sigma^2 X_{n-1} \\
      };
      
      \path[->, font = \scriptsize, auto]
      (d-1-1) edge node{$\Sigma^k \gamma_0$} (d-2-1)
      (d-1-1) edge node[inner sep=1pt]{$\Sigma^k\gamma_1$} (d-2-2)
      (d-1-1) edge[out=-5, in=165] node[inner sep=1pt,fill=white,yshift=-6pt]{$\Sigma^k
        \gamma_{k-2}$} (d-2-4)
      (d-1-1) edge[out=0, in=165] node[inner sep=1pt,fill=white,yshift=-5pt]{$\Sigma^k
        \gamma_{k-1}$} (d-2-5)
      
      (d-2-2) edge node{$\Sigma^{k-1} a^{n-1}_{m+2}$} (d-2-1)
      (d-2-3) edge node{$\Sigma^{k-2} a^{n-1}_{m+3}$} (d-2-2)
      (d-2-4) edge node{$\Sigma^{2} a^{n-1}_{n-1}$} (d-2-3)
      (d-2-5) edge node{$\Sigma a^{n-1}_{n}$} (d-2-4)
      
      (d-2-1) edge node{$\Sigma^{k} b^{n-1}_{m}$} (d-3-1)
      (d-2-2) edge node{$\Sigma^{k-1} b^{n-1}_{m+1}$} (d-3-2)
      (d-2-4) edge node{$\Sigma^{2} b^{n-1}_{n-2}$} (d-3-4);
    \end{tikzpicture}
  \end{center}
  until we factor $\Sigma^k\gamma_0$ through the fiber of $\Sigma^{2}
  b^{n-1}_{n-2}$ by a morphism $\Sigma^k\gamma_{k-1}\colon \Sigma^kX_m \to\Sigma
  X_n$. However,
  \begin{equation*}
    \C(\Sigma^k X_m,\Sigma X_n)\cong \C(\Sigma^{k-1}X_m,X_n)=0
  \end{equation*}
  since $1\leq k-1\leq n-3$, and thus we conclude that $\gamma_0$ factors
  through the zero morphism, so $\gamma_0=0$ and therefore the square
  $\Sigma^k\square_k^2$, and hence the diagram $\Dd_k$, commutes. Thus we have
  proven that $\theta_j$, $\theta_j'$ and $\Dd_j$ exist for all $1\leq j\leq
  n-2$.

  \textbf{Producing a common element.} Recalling that $i_1=a^{n-1}_n$, we get
  from repeated application of the identity
  \begin{equation*}
    (\Sigma^{j}\theta_j')i_j=(-1)^{j-1}\Sigma^{j-1} (a^{n-1}_{m+1}\theta_{j-1}')
  \end{equation*}
  the commutativity relation 
  \begin{align*}
    &(\Sigma^{n-2}\beta^{n-1}_1)(\Sigma^{n-2}\theta_{n-2}')i_{n-2}i_{n-3}\cdots i_1\\
    &\quad =(\Sigma^{n-2}\beta^{n-1}_1)(-1)^{n-3}(\Sigma^{n-3}a^{n-1}_{3})
      (-1)^{n-4}(\Sigma^{n-4}a^{n-1}_4) \cdots (-1)(\Sigma a_{n-1}^{n-1})
      (\Sigma\theta_1')i_1\\
    &\quad =\varepsilon(\Sigma^{n-2}\beta^{n-1}_1)(\Sigma^{n-3}a^{n-1}_{3})
      (\Sigma^{n-4}a^{n-1}_4) \cdots (\Sigma a_{n-1}^{n-1})a^{n-1}_n\\
    &\quad =\varepsilon(\Sigma^{n-2}\beta^{n-1}_1)z^{n-1}_n =\varepsilon f_{n+1}
  \end{align*}
  where $\varepsilon=(-1)^{\sum_{\ell=1}^{n-3}\ell}$. Therefore, taking
  $\nu=\theta_{n-2}c_1^2\beta^{1}_1$ and
  $\mu=\varepsilon\Sigma^{n-2}(\beta^{n-1}_1\theta'_{n-2})$ we get a commutative
  diagram in the shape of \autoref{eq:ss} which exhibits the membership relation
  \begin{equation*}
    \tilde\psi\coloneq\varepsilon\Sigma^{n-2}(\beta^{n-1}_1\theta_{n-2}'\theta_{n-2}c_1^2
    \beta^1_1)\in\todaSS{f_n,\dots,f_2,f_1}.
  \end{equation*}
  We furthermore claim that
  \begin{equation*}
    \varepsilon\tilde\psi\in \todai{f_n,\dots,f_2,f_1}{n-1}
  \end{equation*}
  which completes the proof that
  \begin{equation*}
    \toda{f_n,\dots,f_2,f_1} = \varepsilon\todaSS{f_n,\dots,f_2,f_1}.
  \end{equation*}
  To prove the claim, note that the vertical composite of the commutative
  squares $\square_{n-2}^1$ and $\square_{n-2}^2$ is exactly the leftmost square
  in the diagram
  \begin{center}
    \begin{tikzpicture}    
      \diagram{d}{3em}{2.5em}{
        X_1 & X_2 & X_3 & \cdots & X_{n-1}\\
        Z^{n-1}_1 & Z^{n-1}_2 & Z^{n-1}_3 & \cdots & X_{n-1} & X_n & \Sigma^{n-2}
        Z^{n-1}_1\\
        &&&&&X_n & X_{n+1}\\ 
      };
      
      \path[->, font = \scriptsize, auto]
      (d-1-1) edge node{$f_1$} (d-1-2)
      (d-1-2) edge node{$f_2$} (d-1-3)
      (d-1-3) edge node{$f_3$} (d-1-4)
      (d-1-4) edge node{$f_{n-2}$} (d-1-5)
      
      (d-2-1) edge node{$z^{n-1}_1$} (d-2-2)
      (d-2-2) edge node{$z^{n-1}_2$} (d-2-3)
      (d-2-3) edge node{$z^{n-1}_3$} (d-2-4)
      (d-2-4) edge node{$z^{n-1}_{n-2}$} (d-2-5)
      (d-2-5) edge node{$f_{n-1}$} (d-2-6)
      (d-2-6) edge node{$z^{n-1}_n$} (d-2-7)
      
      (d-3-6) edge node{$f_n$} (d-3-7)
      
      (d-1-1) edge node{$\theta_{n-2}'\theta_{n-2}\beta^{1}_1$} (d-2-1)
      (d-1-2) edge node{$\beta^{n-2}_2\cdots\beta^{2}_{2}$} (d-2-2)
      (d-1-3) edge node{$\beta^{n-2}_3\cdots\beta^{3}_{3}$} (d-2-3)
      \downequal{d}{1-5}{2-5}
            
      \downequal{d}{2-6}{3-6}
      (d-2-7) edge node[inner sep=2pt]{$\Sigma^{n-2}\beta^{n-1}_1$} (d-3-7);
    \end{tikzpicture}
  \end{center}
  which exhibits the membership
  $\varepsilon\tilde\psi\in\todai{f_n,\dots,f_2,f_1}{n-1}$.
\end{proof}

\begin{corollary}\label{cor:Massey}
  Let $\field$ be a field and let $\A$ be a small DG category over $\field$ such
  that the homotopy category $H^0(\dgmod\A)$ of the DG category of right DG
  $\A$-modules is triangulated and has a $(n-2)$-cluster tilting subcategory
  $\U$ closed with respect to the shift $[n-2]$ such that $\U$ is $n$-angulated.
  For a diagram as \autoref{eq:seq} in $\U$, we have the identity
  \begin{equation*}
    \massey{f_n,\dots,f_2,f_1}[n-2] = -\toda{f_n,\dots,f_2,f_1}
  \end{equation*}
  where the left side is the Massey product of the morphisms $f_1,f_2,\dots,f_n$
  and the right side is the Toda bracket in the $n$-angulated category $\U$.
\end{corollary}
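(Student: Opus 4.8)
The plan is to reduce everything to the triangulated category $\T \coloneq H^0(\dgmod\A)$, inside which $\U$ sits as the prescribed $(n-2)$-cluster tilting subcategory, closed under $\Sigma^{n-2} = [n-2]$, carrying the $n$-angulation of \cite[Theorem~1]{GKO}. Applying \autoref{thm:LongerHigher} with this $\T$ and $\C = \U$ gives $\toda{f_n,\dots,f_2,f_1} = \varepsilon\,\todaSS{f_n,\dots,f_2,f_1}$ with $\varepsilon = (-1)^{\sum_{\ell=1}^{n-3}\ell}$, the Shipley--Sagave $n$-fold bracket being computed in $\T$. Since $\varepsilon^2 = 1$, it then suffices to prove the identity
\begin{equation*}
  \todaSS{f_n,\dots,f_2,f_1} = (-1)^{1 + \sum_{\ell=1}^{n-3}\ell}\,\massey{f_n,\dots,f_2,f_1}[n-2]
\end{equation*}
of subsets of $\T(\Sigma^{n-2}X_1, X_{n+1})$, the Massey product being computed in the DG enhancement $\dgmod\A$. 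Both sides are cosets of the same indeterminacy subgroup: the left one by \cite[Proposition~4.10]{Sagave08}, the right one by the standard description of the indeterminacy of an $n$-fold Massey product, whose nested correction terms $\massey{f_j,\dots,f_i}$ (for $2 \le i$, $j \le n-1$ with $2 < j-i+1 < n$) all vanish because $X_i, X_{j+1} \in \U$ and $\U$ is $(n-2)$-cluster tilting. So it is enough to exhibit one morphism on the left-hand side that also lies on the right-hand side with the stated sign.

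To build such a morphism I would start from a defining system for $\massey{f_n,\dots,f_2,f_1}$: homogeneous chains $a_{ij} \in \Hom_{\dgmod\A}(X_i,X_j)^{i-j+1}$ for $1 \le i < j \le n+1$, with $a_{i,i+1}$ a degree-zero cocycle representing $f_i$ and $d a_{ij} = \sum_{i<k<j} \pm\, a_{kj}\, a_{ik}$, the Massey cocycle being $\sum_{1<k<n+1} \pm\, a_{k,n+1}\, a_{1,k}$. The subfamily $(a_{ij})_{2 \le i < j \le n}$ assembles into a (one-sided) twisted complex $X$ on $X_2,\dots,X_n$ inside $\dgmod\A$; its image in $\T$ carries a canonical $(n-1)$-step filtration $0 = F_0X \subseteq F_1X \subseteq \dots \subseteq F_{n-1}X = X$ with $F_{j+1}X/F_jX \cong \Sigma^j X_{n-j}$, which is precisely an $(n-1)$-filtered object based on $(f_{n-1},\dots,f_3,f_2)$ in the sense of \cite[Definition~5.4]{CF}, the commutative triangles demanded by \ref{def:SSbracket_2} being exactly the Maurer--Cartan relations among three consecutive indices. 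Feeding this into \autoref{eq:ss}, the morphism $\Sigma^{n-2}\nu\colon \Sigma^{n-2}X_1 \to X$ is assembled from the chains $a_{1,k}$ and the morphism $\mu\colon X \to X_{n+1}$ from the chains $a_{k,n+1}$, so that the composite $\mu(\Sigma^{n-2}\nu)$ equals the class of the Massey cocycle $\sum_{1<k<n+1}\pm\, a_{k,n+1}a_{1,k}$, reinterpreted as a morphism $\Sigma^{n-2}X_1 \to X_{n+1}$ through the $[n-2]$-shift. This simultaneously exhibits the composite as an element of $\todaSS{f_n,\dots,f_2,f_1}$ and identifies it inside $\massey{f_n,\dots,f_2,f_1}[n-2]$, up to the overall sign.

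The main obstacle is the sign. Four conventions must be reconciled: the Koszul signs in the Maurer--Cartan equations and in the Massey cocycle; the normalisation of the Massey product itself; the differential and shift signs governing twisted complexes and the suspension in $\dgmod\A$; and the rotation signs $(-1)^j$ threaded through the Shipley--Sagave filtration, which are precisely what produces the factor $\varepsilon$ in the proof of \autoref{thm:LongerHigher}. I expect the cleanest route is to fix the conventions of \cite{CF} on the $\todaSS$ side and of a standard source such as \cite{May69} or \cite[§A1.4]{Ravenel04} on the Massey side, verify the base case $n = 3$ — where $\U = \T$, $\todaSS{f_3,f_2,f_1}$ is the ordinary fiber--cofiber Toda bracket, and $\todaSS{f_3,f_2,f_1} = -\massey{f_3,f_2,f_1}[1]$ is the classical discrepancy between triangulated Toda brackets and triple Massey products — and then propagate the sign through the recursive construction of the filtration, mirroring the induction in the proof of \autoref{thm:LongerHigher}. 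A legitimate shortcut, if a suitable statement is available in the literature, is to quote a direct comparison of $\todaSS$ with matric Massey products in a DG-enhanced triangulated category and only re-derive the sign contributed by the $[n-2]$-shift; either way, all contributions are expected to collapse to the single sign $-1$ of the statement.
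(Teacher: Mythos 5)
Your reduction is the same as the paper's first step: apply \autoref{thm:LongerHigher} to get $\toda{f_n,\dots,f_2,f_1}=(-1)^{\sum_{\ell=1}^{n-3}\ell}\todaSS{f_n,\dots,f_2,f_1}$, so that everything hinges on comparing $\todaSS{f_n,\dots,f_2,f_1}$ with $\massey{f_n,\dots,f_2,f_1}[n-2]$. The identity you still need, $\todaSS{f_n,\dots,f_2,f_1}=(-1)^{1+\sum_{\ell=1}^{n-3}\ell}\massey{f_n,\dots,f_2,f_1}[n-2]$, is (after the elementary manipulation $\sum_{\ell=1}^{n-1}\ell=\sum_{\ell=1}^{n-3}\ell+2n-3$) exactly equivalent to the statement the paper quotes, namely \cite[Theorem~4.2.6]{JM}: $\massey{f_n,\dots,f_2,f_1}[n-2]=(-1)^{\sum_{\ell=1}^{n-1}\ell}\todaSS{f_n,\dots,f_2,f_1}$. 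The paper's entire proof is that citation plus the sign arithmetic $(-1)^{\sum_{\ell=1}^{n-1}\ell}\cdot(-1)^{\sum_{\ell=1}^{n-3}\ell}=-1$; your ``legitimate shortcut'' is therefore precisely the intended argument, and with it your proposal is correct and essentially identical to the paper's.

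Where you diverge is in offering, as the primary route, a from-scratch comparison of $\todaSS$ with Massey products via defining systems and one-sided twisted complexes. That is a reasonable and genuinely different plan in outline (it is roughly how such comparison theorems are proved), but as written it has a real gap: the whole content of the corollary is the sign, and you explicitly defer the sign bookkeeping to ``propagation through the recursion,'' checking only $n=3$ (which matches \autoref{rem:Massey}). Nailing down that sign means reconciling the suspension/rotation conventions of \cite{CF} and \cite{Shipley02,Sagave08} with the Koszul signs of the chosen Massey convention across all $n$, i.e.\ re-proving the DG-enhancement comparison result of \cite{JM} — a substantial theorem, not a routine verification. Two smaller points also need care if you pursue that route: the dictionary ``Maurer--Cartan relations $=$ the commuting triangles of \ref{def:SSbracket_2}'' should be checked with the shift signs of the filtration (the factors $(-1)^{j}$ that produce $\varepsilon$ in the proof of \autoref{thm:LongerHigher}), and the claim that $\massey{f_n,\dots,f_2,f_1}$ is a single coset of $(f_n)_*\U(\Sigma^{n-2}X_1,X_n)+(\Sigma^{n-2} f_1)^*\U(\Sigma^{n-2}X_2,X_{n+1})$ requires an argument using the $(n-2)$-cluster tilting vanishing (higher Massey products are not cosets in general); in the paper this is subsumed in the cited theorem. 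So: correct as a proof if you take the citation shortcut; otherwise the sign verification is the missing core of the argument.
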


\begin{proof}
  By \cite[Theorem 4.2.6]{JM} we have the identity
  \begin{equation*}
    \massey{f_n,\dots,f_2,f_1}[n-2] = (-1)^{\sum_{\ell=1}^{n-1} \ell}
    \todaSS{f_n,\dots,f_2,f_1}
  \end{equation*}
  so since
  \begin{equation*}
    (-1)^{\sum_{\ell=1}^{n-1}\ell}\cdot(-1)^{\sum_{\ell=1}^{n-3}\ell}=(-1)^{2
      \left(\sum_{\ell=1}^{n-3}\ell\right)+2n-3}=-1
  \end{equation*}
  the result follows from applying \autoref{thm:LongerHigher}.
\end{proof}

\begin{remark}\label{rem:Massey}
  As a sanity check, for $n=3$, take $\U$ to be the full homotopy category of
  $\dgmod \A$. Then the previous result specializes to
  \begin{equation*}
    \massey{f_3,f_2,f_1}[1] = -\toda{f_3,f_2,f_1}
  \end{equation*}
  which is what we should expect.
\end{remark}

\end{document}